\numberwithin{equation}{section} 
\theoremstyle{plain}
\newtheorem{theorem}{Theorem}[section]
\newtheorem{proposition}[theorem]{Proposition}
\newtheorem{lemma}[theorem]{Lemma}
\theoremstyle{definition}
\newtheorem{definition}[theorem]{Definition}
\theoremstyle{remark}
\newtheorem{remark}[theorem]{Remark}
\newcommand{\supp}{\mathop{\rm supp}}
\newcommand{\Bal}{\mathop{\rm Bal}}
\newcommand{\field}[1]{\mathbb{#1}}
\newcommand{\R}{\field{R}}
\newcommand{\N}{\field{N}}
\renewcommand{\H}{\field{H}}
\newcommand{\C}{\field{C}}
\newcommand{\CC}{{\mathcal C}}
\newcommand{\MM}{{\mathcal M}}
\renewcommand{\Re}{\mathop{\rm Re}}
\renewcommand{\Im}{\mathop{\rm Im}}
\def\XXint#1#2#3{{\setbox0=\hbox{$#1{#2#3}{\int}$}
\vcenter{\hbox{$#2#3$}}\kern-.5\wd0}}
\renewcommand{\bar}{\overline}
\renewcommand{\tilde}{\widetilde}
\renewcommand{\hat}{\widehat}
\newcommand{\OO}{\mathcal O}
\title{Equilibrium problems in weakly admissible external fields created by pointwise charges}
\author{R. Orive, J.F. S\'{a}nchez Lara and F. Wielonsky\footnote{Corresponding author}}
\date{\today}
\begin{document}

\vspace{1cm} \maketitle
\noindent
R. Orive, rorive@ull.es\\
 Departamento de An\'alisis Matem\'atico, Universidad de La Laguna, Spain.
\\[\baselineskip]
J. F. S\'anchez-Lara, jslara@ugr.es\\
 Departamento de Matem\'atica Aplicada, Universidad de Granada, Spain.
\\[\baselineskip]
F. Wielonsky, franck.wielonsky@univ-amu.fr\\
Laboratoire I2M, UMR CNRS 7373, Universit\'e Aix-Marseille\\
39 Rue Joliot Curie,
F-13453 Marseille Cedex 20, FRANCE
\newpage
\begin{abstract}
The main subject of this paper is equilibrium problems on an unbounded conductor $\Sigma$ of the real axis in the presence of a weakly admissible external field. An admissible external field $Q$ on $\Sigma$ satisfies, along with other mild conditions, the following growth property at infinity:
$$\lim_{|x| \rightarrow \infty}(Q(x) - \log |x|) = +\infty.$$
This condition guarantees the existence and uniqueness of the equilibrium measure in the presence of $Q$, and the compactness of its support. In the last 10-15 years, several papers have dealt with
weakly admissible external fields, in the sense that $Q$ satisfies a weaker condition at infinity, namely,
$$\exists M\in(-\infty,\infty],\quad\liminf_{|x| \rightarrow \infty}(Q(x) - \log |x|) = M.$$
Under this last assumption, there still exists a unique equilibrium measure in the external field $Q$, but the support need not be a compact subset of $\Sigma$ anymore. In most examples considered in the literature the support is indeed unbounded.
Our main goal in this paper is to illustrate this topic by means of a simple class of external fields on the real axis created by a pair of attractive and repellent charges in the complex plane, and to study the dynamics of the associated equilibrium measures as the strength of the charges evolves. As one of our findings, we exhibit configurations where the support of the equilibrium measure in a weakly admissible external field is a compact subset of the real axis. To achieve our goal, we extend some results from potential theory, known for admissible external fields, to the weakly admissible case. These new results may be of independent interest.  Finally, the so--called signed equilibrium measure is an important tool in our analysis. Its relationship with the (positive) equilibrium measure is also explored.
\end{abstract}
{\bf Keywords:} Logarithmic potential theory, Equilibrium problems, Pointwise charges.
\section{Introduction}

Throughout this article, we consider equilibrium problems on the real axis in a general rational external field created by pointwise charges, of the form
\begin{equation}\label{fixedfield}
Q(x) = \sum_{j=1}^q\gamma_j\log|x-z_j|=-\sum_{j=1}^q\gamma_jV^{\delta_{z_j}} (x),
\end{equation}
where
$$\gamma_j\in \R,\quad z_j\in \C,\quad\sum_{j=1}^q\gamma_j = T>0,$$
and, as usual, the potential of a measure $\sigma$ is defined by
$$V^{\sigma}(x) = -\int \log |x-y|d\sigma (y),$$
and $\delta_z$ stands for the Dirac mass at a point $z\in \C$. Note that, if $\gamma_{k}>0$, the charge located at $z_{k}$ acts as an attractor while, if $\gamma_{k}<0$, the charge acts as a repellent.
For $\gamma_k>0,$ we assume that $z_k$ lies on $\C \setminus \R$ to ensure that the external field is a lower semi-continuous function. These external fields, or weights, are called rational since their derivatives are rational functions.
The particular case where $Q$ has an additional polynomial part of even degree
was considered in \cite{OrSL2015}. Then, in \cite{OrSL2016}, the case of rational external fields of type \eqref{fixedfield} (without polynomial part) was treated when all $\gamma_k > 0$ (and, thus, all $z_k \in \C \setminus \R$). One of the contributions of this paper is to deal with both positive and negative charges (that is, a mixture of attractors and repellents), where in principle, for $\gamma_k < 0$, $z_k \in \R$ is allowed.

Though, in general, the study of weighted equilibrium problems on a conductor $\Sigma$ deals with probabilistic equilibrium measures $\mu_Q$ with $\mu_Q (\Sigma) = 1,$ the authors of \cite{MOR2015}, \cite{OrSL2015} and \cite{OrSL2016} have preferred to consider equilibrium measures with varying mass $\mu_t = \mu_{t,Q}$, in the sense that $\mu_t (\Sigma) = t>0$. This is similar to the dynamical approach proposed in the seminal work by V. Buyarov and E. A. Rakhmanov \cite{BR:99}. Actually, both approaches are equivalent since
\begin{equation*}
\mu_{t,Q} = t\mu_{1,Q /t},
\end{equation*}
where $\mu_{1,Q /t}$ denotes the unit equilibrium measure in the external field $Q(x)/t.$

Following \cite{Saff:97}, an external field $Q$ for a (possibly unbounded) closed subset $\Sigma$ of the complex plane and for a given mass $t>0$ is said to be admissible if
\begin{itemize}

\item [(i)] $Q: \Sigma \rightarrow (-\infty,+\infty]$ is a lower semi-continuous function.

\item [(ii)] The set $\displaystyle \{x\in \Sigma:Q(x) < +\infty\}$ has positive (logarithmic) capacity.

\item [(iii)] In case $\Sigma$ is unbounded, $Q$ satisfies the growth condition at infinity:
\begin{equation}\label{condadmiss}
\lim_{|x| \rightarrow \infty}(Q(x) - t\log |x|) = +\infty.
\end{equation}

\end{itemize}

These conditions guarantee the existence of a unique measure $\mu_t = \mu_{t,Q}$, with total mass $t$, with compact support in $\Sigma$, minimizing the weighted energy
$$I_Q (\sigma) = -\iint \log |x-y|d\sigma (x) d\sigma (y)+2\int Q(x)d\sigma (x)=I(\sigma)+2\int Q(x)d\sigma (x),$$
where $I(\sigma)$ denotes the unweighted energy of $\sigma$. The equilibrium measure is uniquely characterized by the fact that its total (also called ``chemical'') potential satisfies the Frostman (or variational) inequalities:
\begin{align}\label{Frost1}
V^{\mu_t}(x)+Q(x) & \geq c_t,\quad q.e.~ x\in \Sigma,\\
\label{Frost2}
V^{\mu_t}(x)+Q(x) & \leq c_t,\quad x\in\supp \mu_t,
\end{align}
for some constant $c_t\in\mathbb{R}$ (the equilibrium constant), and where $q.e.$ means quasi-everywhere, that is, outside of a set of capacity 0. It follows from
\eqref{Frost1} and \eqref{Frost2} that
\begin{equation}\label{eq-ct}
c_{t}=\frac1t\left(I(\mu_{t})+\int Q(x)d\mu_{t}(x)\right).
\end{equation}

In the last years, a growing interest in the so-called {\it weakly admissible external fields} has taken place (see \cite{BLW}, \cite{Hardy-Kuijlaars} and \cite{Simeonov}, among others). For these external fields, condition \eqref{condadmiss} is replaced by the weaker growth assumption at infinity,
\begin{equation}\label{condwadmiss}
\liminf_{|x| \rightarrow \infty}(Q(x) - t\log |x|) = M,
\end{equation}
where $M\in(-\infty,\infty]$.
Note that $Q$ is lower bounded on $\Sigma$, and that, for any measure $\sigma$ of mass $t$, condition \eqref{condwadmiss} ensures that its weighted energy is well-defined:
$$I_{Q}(\sigma)=\iint\left(\log\frac{1}{|x-y|}+\frac{Q(x)}{t}+\frac{Q(y)}{t}\right)d\sigma(x)d\sigma(y)>-\infty.
$$
Then, the existence and uniqueness of the weighted equilibrium measure $\mu_{t}$ remain guaranteed, it has finite weighted energy $I_{Q}(\mu_{t})$ and finite energy $I(\mu_{t})$, see \cite[Theorem 3.4]{BLW},  but its support need not be a compact subset of $\Sigma$ (in fact, it is often an unbounded set in the examples considered in the literature up to now, as far as we know). {Observe that the balayage of a measure $\nu$ onto an unbounded closed set $\Sigma$ is a typical example of this situation; indeed, it agrees with the equilibrium measure of $\Sigma$ in the external field $Q(x) = -V^{\nu}(x)$, in such a way that the total negative (attractive) mass equals the total positive mass to be spread through $\Sigma$ (the notion of balayage of a measure is recalled in Section \ref{Pot}).

Returning to external fields of the form (\ref{fixedfield}), we can thus study the
equilibrium measure $\mu_t = \mu_{t,Q}$, with total mass $t\in (0,T]$.
The case where $t\in (0,T)$ corresponds to the admissible setting (\ref{condadmiss}),
while the limit case $t=T$ is related to the weakly admissible one. In this last situation the total attractive mass of the fixed charges equals the total mass of the free charge distributed along the real axis. This last case is our main concern in this paper.

It is interesting to describe the different possible scenarios for the equilibrium measure in these rational external fields. Also noteworthy is the fact that the weakly admissible external fields considered in this paper may be seen as limits of admissible external fields and, thus, the tools used in the previous papers \cite{MOR2015}, \cite{OrSL2015} and \cite{OrSL2016} for admissible rational external fields will be useful. Finally, the {\it signed equilibrium measure}, which is easier to compute than the (positive) equilibrium measure and is closely related to it (see e.g. \cite{KD}), will be another important tool in our analysis.

The outline of the paper is as follows. In Section 2, making use of the notion of signed equilibrium measure, we give a simple condition for compactness of the support of the equilibrium measure in a weakly admissible external field of the general form \eqref{fixedfield}. In Section 3 we specialize to the case where a weak external field is created by a pair of ``attractor/repellent'' charges. Our goal is to describe the evolution of the support of the equilibrium measure as the strength of the repellent charge increases. Our main result is stated in
Theorem \ref{thm:main}. In the last section, we have gathered several extensions of classical results of potential theory to an unbounded setting as well as properties of equilibrium problems in weakly admissible external fields. These results are used in the previous sections. They may also be of independent interest.

\section{Weak external field created by a finite number of charges: A condition for compactness}
In this short section, we consider weakly admissible external fields on $\R$ of the general form \eqref{fixedfield}, with $\sum_{j=1}^q\gamma_j = T>0$, and the corresponding equilibrium measure $\mu_T$ of mass $T$.
We give a criterion for compactness of the support of $\mu_{T}$.
Our analysis is based on the {\it signed equilibrium measure}, see Section \ref{sgn-meas} for details.

Recall that a signed measure $\eta_t = \eta_{t,Q}$ is said to be the {\it signed equilibrium measure} of $\Sigma$ of mass $t>0$ if
$\eta_t (\Sigma) =t$ and
\begin{equation}\label{signedequil}
V^{\eta_t} (x) + Q(x) = C,\quad q.e.~x\in \Sigma.
\end{equation}
The signed equilibrium measure $\eta_t$ is unique, provided it exists, see e.g. \cite{BDS2009} for the case of Riesz kernels. Its usefulness relies on the fact that it is easier to compute than the positive equilibrium measure $\mu_t$ and provides important information about the latter. In particular, it is well known that, in the presence of an admissible external field $Q$, the support of $\mu_t$ is contained in the support of $\eta_t^+$, the positive part of $\eta_t$ in its Jordan decomposition, see \cite[Lemma 3]{KD}. In Proposition 4.11 below we prove that this result remains valid in the weakly admissible case as well.

Now, suppose that $\Im z_j \neq 0,j=1,\ldots,q,$ in \eqref{fixedfield}. It is immediate 
that the opposite of the balayage of the measure $-\sum_{j=1}^{q}\gamma_{j}\delta_{z_{j}}$ on $\R$ is a signed measure
\begin{equation*}
\eta_T = \eta_{T,Q}:= \sum_{j=1}^q\gamma_j\Bal(\delta_{z_j},\R)
\end{equation*}
which has total mass $T$ and satisfies the identity \eqref{signedequil} on $\R$. Therefore, $\eta_T$ is the signed equilibrium measure in the weakly admissible external field \eqref{fixedfield}; moreover, \eqref{zrealbalayage}
below yields the expression of its density:
\begin{equation}\label{signedfixed}
\eta'_T (x) = \frac{1}{\pi}\sum_{j=1}^q\gamma_{j}\frac{|\Im z_j|}{|x-z_j|^2},\;x\in \R.
\end{equation}
Now, as a consequence of the above expression 
and the aforementioned fact that $\supp \mu_T \subset \supp \eta^+_T$ (Proposition 4.11),
we have
\begin{theorem}\label{thm:compsupp}
Suppose that $\Im z_j \neq 0, j=1,\ldots,q.$ If
\begin{equation}\label{generalcondcomp}
\displaystyle \sum_{j=1}^q\gamma_j|\Im z_j| < 0,
\end{equation}
then $\supp \mu_T$ is a compact subset of the real axis.
\end{theorem}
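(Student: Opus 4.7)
The plan is to show that the signed equilibrium density $\eta'_T$ is strictly negative outside some bounded interval, which forces the positive part $\eta_T^+$ to have compact support, and then invoke Proposition~4.11 to transfer this compactness to $\supp \mu_T$.

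First I would study the large $|x|$ behaviour of the explicit density
\begin{equation*}
\pi\eta'_T(x)=\sum_{j=1}^q \gamma_j\frac{|\Im z_j|}{(x-\Re z_j)^2+(\Im z_j)^2}.
\end{equation*}
Multiplying by $x^2$ and using that for each fixed $z_j$
\begin{equation*}
\frac{x^2}{(x-\Re z_j)^2+(\Im z_j)^2}\longrightarrow 1\quad\text{as }|x|\to\infty,
\end{equation*}
one obtains
\begin{equation*}
\lim_{|x|\to\infty}\pi x^2\eta'_T(x)=\sum_{j=1}^q\gamma_j|\Im z_j|.
\end{equation*}
By hypothesis \eqref{generalcondcomp} this limit is a strictly negative real number, so there exists $R>0$ such that $\eta'_T(x)<0$ for every $x\in\R$ with $|x|>R$.

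Next I would translate this sign information into a statement about the Jordan decomposition of $\eta_T$. Since $\eta_T$ is absolutely continuous with respect to Lebesgue measure on $\R$, its positive part $\eta_T^+$ is simply the measure with density $\max(\eta'_T,0)$. The previous step shows that this density vanishes identically on $\R\setminus[-R,R]$, hence
\begin{equation*}
\supp \eta_T^+\subset[-R,R],
\end{equation*}
which is compact.

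Finally, I would apply Proposition~4.11, which extends to the weakly admissible setting the classical inclusion $\supp\mu_T\subset\supp\eta_T^+$. Combining it with the bound just obtained gives $\supp\mu_T\subset[-R,R]$, so $\supp\mu_T$ is a compact subset of $\R$, as required. The only non‑routine ingredient is Proposition~4.11 (which we are allowed to assume); the computational part — the asymptotic sign analysis of $\eta'_T$ — is elementary once the explicit balayage formula \eqref{signedfixed} is in hand.
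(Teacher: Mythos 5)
Your argument is correct and is essentially the paper's own proof, just with the asymptotic sign analysis of $\eta'_T$ spelled out more explicitly: both rely on the explicit balayage formula \eqref{signedfixed}, observe that condition \eqref{generalcondcomp} forces $\eta'_T(x)<0$ for $|x|$ large (the paper states this directly, you verify it via $\lim_{|x|\to\infty}\pi x^2\eta'_T(x)=\sum_j\gamma_j|\Im z_j|<0$), conclude that $\supp\eta_T^+$ is compact, and finish by Proposition~4.11.
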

\begin{proof}
Indeed, if (\ref{generalcondcomp}) is satisfied, \eqref{signedfixed} implies that $\eta'_T (x) < 0$ for $|x|$ large enough and, thus, $\supp \eta^+_T$ is a compact set. Since
$\supp \mu_T \subset \supp \eta^+_T$, the assertion in the theorem follows.
\end{proof}
As pointed out in the introduction, the previous theorem
provides a simple class of weakly admissible external fields which confine the corresponding equilibrium measures in compact supports, see Figure \ref{fig:compsuppgral} for an example.
\begin{figure}
    \begin{center}
        \includegraphics[scale=0.4]{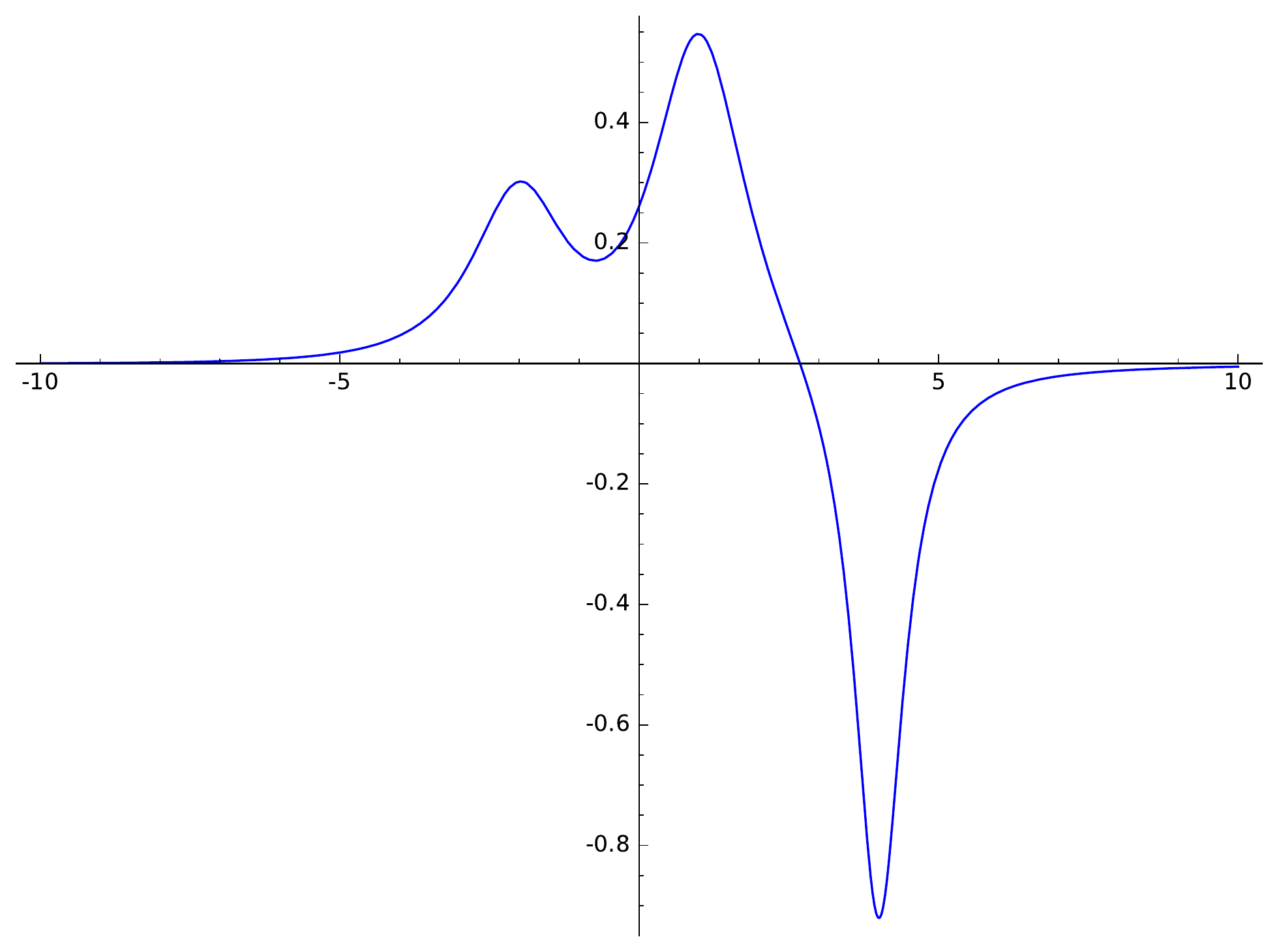}
    \end{center}
    \caption{Density of the signed equilibrium measure $\eta_{T}$, of total mass $0.5$, in
    a weak external field created by two attractors of charges $-1$ and $-2$, located at $z_1 = -2 + i$ and $z_3 = 1 + i$, and two repellents of charges $1$ and $1.5$, placed at $z_2 = 3i$ and $z_{4} = 4 + 0.5 i$.
Since $\eta_{T}(x)$ behaves like $-3/(4\pi x^{2})$ for $|x|$ large, the equilibrium measure $\mu_{T}$ has a compact support.}
    \label{fig:compsuppgral}
\end{figure}

Note that if all the charges are attractors (i.e. $\gamma_j > 0,j=1,\ldots,q$), then $\eta_T$ is a positive measure on the whole real axis. From the characterization of the equilibrium measure given in Proposition \ref{char-Frost} and the identity \eqref{signedequil}, we may thus deduce that $\mu_T = \eta_T$ is supported on the whole axis
$\R$. This was the case considered in the previous \cite{OrSL2016}.

\section{External field created by a pair of attractor and repellent charges}

In this section, we concentrate 
on external fields on $\R$ created by the combination of a negative (i.e. attractive) charge and a positive (i.e. repellent) charge of the form
\begin{equation}\label{def-pair}
-\delta_{z_{1}}+\gamma\delta_{z_{2}},
\end{equation}
with $z_{1}\in\C\setminus\R$, $z_{2}\in\C$, $\gamma \in (0,1)$. The corresponding external field is given by
\begin{equation}\label{twochargest}
\begin{split}
Q(x) & = \log |x-z_1| - \gamma \log |x-z_2| \\
& =\frac12\log(x-z_{1})+\frac12\log(x-\bar z_{1})-\frac{\gamma}{2} \log (x-z_2)- \frac{\gamma}{2} \log (x-\bar z_2),
\end{split}
\end{equation}
where in each of the logarithms in \eqref{twochargest} it is chosen the branch with a cut connecting $z_j$ or $\bar z_{j}$ with the point at infinity, and not intersecting the real axis.
The second expression in (\ref{twochargest}) gives the analytic extension of $Q$ outside of $\R$. We still denote it by $Q$.

For ease of exposition, we will assume that $\Re z_1 \neq \Re z_2$, hence, without loss of generality, we can suppose that
\begin{equation}\label{z1z2}
z_1 = -1 + \beta_1 i,\quad z_2 = 1 + \beta_2 i,\qquad\beta_{1},\beta_{2}>0.
\end{equation}
The two limit cases
$\Re z_1 = \Re z_2$ (e.g. $=0$) and $\beta_{2}=0$ will be considered separately  in Section \ref{limit-case}.}

We aim at studying the equilibrium measure $\mu_T$ of mass $T$, 
in the weakly admissible case $T=1-\gamma$, and our main concern is to describe how the support of the equilibrium measure evolves as the parameter $\gamma$ moves from 0 to 1. In particular, we will see that the support may be bounded or unbounded, and consists of a single interval (one-cut phase) or two  intervals (two-cut phase), see Theorem \ref{thm:main} below. It is also noteworthy that even with a single attractor as in the present case, the external field \eqref{twochargest} may have two minima,
see Figure \ref{fig:example1} for an example.
\begin{figure}
    \begin{center}
        \includegraphics[scale=0.4]{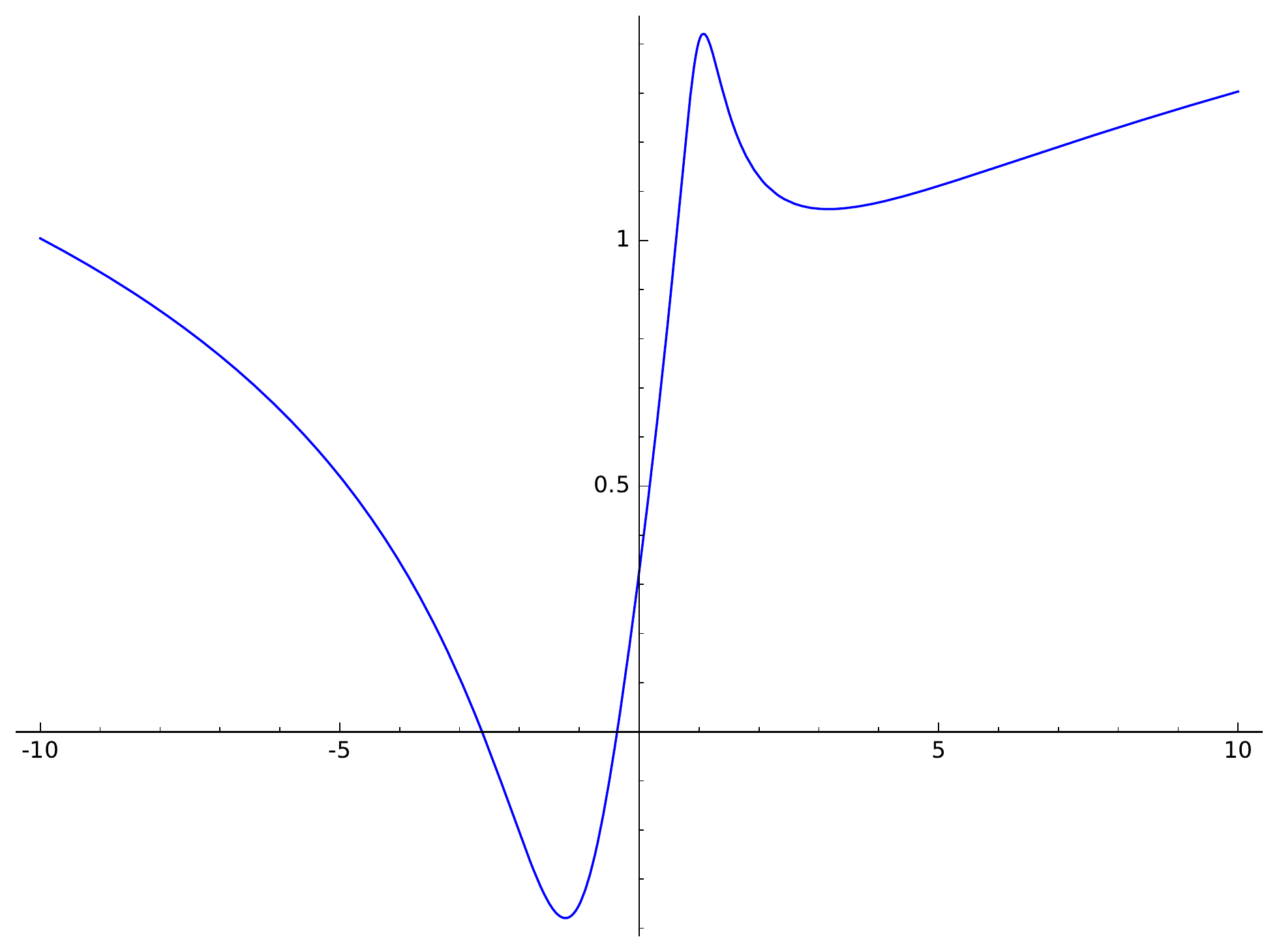}
    \end{center}
    \caption{An external field $Q$ due to an attractor/repellent pair with two minima. Here, $\beta_1 = 1,\beta_2 = .3$ and $\gamma = .5$. It is easy to check that $Q$ has two minima when $\gamma > \Gamma_1= .262...$, that is, for a sufficiently strong repellent charge.}
    \label{fig:example1}
\end{figure}
\begin{lemma}\label{lem:minima}
There exists $\Gamma_0 \in (0,1)$ such that for any $\Gamma_0<\gamma<1$, the external field $Q(x)$ given by \eqref{twochargest}-\eqref{z1z2} has two real minima. One of the real minima belongs to the interval $(-\infty,-1)$,
while the other one is in $(1,+\infty)$.
\end{lemma}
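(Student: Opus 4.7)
The plan is to study the zeros of $Q'$ on $\R$ via the polynomial at its numerator. Clearing denominators,
\[
Q'(x)=\frac{x+1}{(x+1)^{2}+\beta_{1}^{2}}-\gamma\,\frac{x-1}{(x-1)^{2}+\beta_{2}^{2}}=\frac{P(x;\gamma)}{[(x+1)^{2}+\beta_{1}^{2}][(x-1)^{2}+\beta_{2}^{2}]},
\]
where
\[
P(x;\gamma)=(1-\gamma)x^{3}-(1+\gamma)x^{2}+(\gamma-1+\beta_{2}^{2}-\gamma\beta_{1}^{2})x+(1+\gamma+\beta_{2}^{2}+\gamma\beta_{1}^{2})
\]
is a cubic with positive leading coefficient for $\gamma<1$, which degenerates to the quadratic $P(x;1)=-2x^{2}+(\beta_{2}^{2}-\beta_{1}^{2})x+(2+\beta_{1}^{2}+\beta_{2}^{2})$ at $\gamma=1$. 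A direct inspection of the two terms of $Q'$ shows that $Q'>0$ on $[-1,1]$, while $Q'(x)\sim(1-\gamma)/x$ as $|x|\to\infty$ has opposite signs at $\pm\infty$. The intermediate value theorem therefore already provides, for every $\gamma\in(0,1)$, a local minimum of $Q$ in $(-\infty,-1)$.

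The delicate point is to exhibit the second minimum in $(1,+\infty)$. I would proceed by a continuity argument with respect to $\gamma$, passing to the limit $\gamma=1$. Since $P(-1;1)=2\beta_{1}^{2}>0$, $P(1;1)=2\beta_{2}^{2}>0$, and $P(x;1)\to-\infty$ as $|x|\to\infty$, the quadratic $P(\cdot;1)$ has two simple real roots $x_{-}(1)<-1<1<x_{+}(1)$, and the resulting sign pattern $(-,+,-)$ of $P(\cdot;1)$ shows that $x_{-}(1)$ is a local minimum and $x_{+}(1)$ a local maximum of $Q_{1}$. By the implicit function theorem applied at each simple root, there exist continuous branches $\gamma\mapsto x_{\pm}(\gamma)$ defined in a left-neighborhood of $\gamma=1$, along which $Q''(x_{\pm}(\gamma))$ preserves the sign it had at $\gamma=1$. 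Thus for $\gamma$ close enough to $1$, $x_{-}(\gamma)\in(-\infty,-1)$ is a local minimum and $x_{+}(\gamma)\in(1,+\infty)$ a local maximum of $Q$.

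It remains to produce the missing third critical point. For $\gamma<1$ the leading term $(1-\gamma)x^{3}$ of $P(\cdot;\gamma)$ is strictly positive, so $P(x;\gamma)\to+\infty$ as $x\to+\infty$. Since $P$ changes from positive to negative at $x_{+}(\gamma)$ (inherited from $\gamma=1$), there must exist a third real root $x_{*}(\gamma)>x_{+}(\gamma)>1$ at which $Q'$ changes from negative to positive. This is the desired second local minimum of $Q$, lying in $(1,+\infty)$. Taking $\Gamma_{0}$ to be the infimum of the $\gamma\in(0,1)$ for which this configuration persists yields the lemma.

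The main technical subtlety is the singular perturbation at $\gamma=1$, where the degree of $P(\cdot;\gamma)$ drops from $3$ to $2$; equivalently, one root of $P(\cdot;\gamma)$ escapes to $+\infty$ as $\gamma\to 1^{-}$. It is precisely this escaping root that provides the second minimum, which consequently travels to $+\infty$ as $\gamma\uparrow 1$. Apart from this mechanism, the analysis of the limit $\gamma=1$ is routine.
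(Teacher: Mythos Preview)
Your argument is correct and follows essentially the same route as the paper's proof: both reduce to the cubic numerator $P(x;\gamma)$ of $Q'$, observe that at $\gamma=1$ it degenerates to a quadratic with two simple real roots straddling $[-1,1]$, and then use continuity to conclude that for $\gamma$ just below $1$ there are three real roots, the largest of which escapes to $+\infty$ and furnishes the second minimum in $(1,+\infty)$. Your write-up is more explicit (you compute $P$, check signs at $\pm1$, and invoke the implicit function theorem), but the underlying mechanism is identical to the paper's.
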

\begin{proof}
It is clear that $Q(x)$ is strictly increasing when $x\in[-1,1]$ so has no extremum  there. When $\gamma<1$, the derivative of $Q(x)$ has three (possibly complex) roots, while when $\gamma=1$, the derivative has two roots, those of
$$
-2x^{2}-(\beta_{1}^{2}-\beta_{2}^{2})x+(2+\beta_{1}^{2}+\beta_{2}^{2}),
$$
which clearly are real and of different signs. By continuity, it implies, that for $\gamma$ less than 1 and close to it, the derivative of $Q(x)$ has three real roots (one of them tending to infinity as $\gamma$ tends to 1). The smallest root
necessarily corresponds to a minimum of $Q(x)$, and belongs to the interval $(-\infty,-1)$. Then, the second one is a maximum, and the third one
is another minimum, both belonging to $(1,+\infty)$, and the latter tending to infinity as $\gamma$ tends to 1.
\end{proof}

\begin{remark}\label{rem:phasetransit}
For admissible values $t< T$, the fact that the external field \eqref{twochargest} has two minima was proved in \cite{MOR2015} and \cite{OrSL2015}--\cite{OrSL2016}
to be a sufficient condition for the existence of a {\it two-cut phase}, during which the support of the equilibrium measure consists of two disjoint intervals
(for polynomial and rational external fields, all charges being attractors).
In the present case, it could be checked from our subsequent results that it is not true anymore.
In those previous works it was also proved that, under certain conditions, there exists a two--cut phase even if there is a single minimum. The limit case takes place when a so-called ``type III singularity'' occurs  (see the above mentioned papers and \cite{KuML}). In principle, it seems natural to expect the same situation here, namely the existence of another value $0< \widetilde{\Gamma}_0 < \Gamma_0 < 1$ for which $Q$ has a type III singularity, for some admissible value $t< T=1-\gamma$, such that for $\gamma > \widetilde{\Gamma}_0$, there exists a two--cut phase. However, we will not study further this issue, since our main concern here is the weakly admissible case $t=T$.
\end{remark}
\subsection{Equilibrium measures on the real line}
In this section, we recall a few results about equilibrium measures on the real line in an admissible external field $Q$ which is assumed to be real-analytic, and in particular of the form (\ref{twochargest}) (and thus, in this case, it is assumed that the mass $t$ of the equilibrium measure satisfies $t<T$). We denote by
$$C_{\mu}(z)=\int\frac{d\mu(s)}{s-z},
$$
the Cauchy transform of a positive measure $\mu$ supported on a subset of $\C$.
\begin{theorem}[{\cite[Theorem 2]{MOR2015}}]
Assume there is a domain $\Omega$ in $\C$, containing $\R$, such that the admissible external field
$Q(x)$, $x\in\R$, is the trace of a (necessarily real) analytic function, still denoted $Q$, in $\Omega$. Then, there exists an analytic function $R$ in $\Omega$, real-valued on $\R$, such that the equilibrium measure $\mu$ with support $S\subset\R$ in the admissible external field $Q(x)$ satisfies
\begin{equation}\label{eq-MOR}
(C_{\mu}(z)+Q'(z))^{2}=R(z),\quad z\in\Omega\setminus S.
\end{equation}
\end{theorem}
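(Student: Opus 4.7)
The plan is to set $F(z) := C_{\mu}(z) + Q'(z)$ on $\Omega \setminus S$ and to show that $R := F^{2}$ extends from $\Omega \setminus S$ to an analytic function on all of $\Omega$ that is real-valued on $\Omega \cap \R$. Observe first that $F$ is holomorphic on $\Omega \setminus S$: the Cauchy transform $C_{\mu}$ is holomorphic on $\C \setminus S$, while $Q'$ is holomorphic on $\Omega$ by hypothesis. What needs to be done is to continue $F^{2}$ analytically across the support $S$ and across the finite set of endpoints of $S$.

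The heart of the argument is a boundary-value computation on the interior $S^{\circ}$. Because $Q$ is real-analytic, the standard regularity theory for equilibrium measures guarantees that $\mu$ possesses a real-analytic density $\mu'(x)$ on $S^{\circ}$, so that the Sokhotski--Plemelj formula yields
\begin{equation*}
C_{\mu}(x \pm i0) \;=\; \mathrm{p.v.}\!\int \frac{d\mu(s)}{s-x} \;\pm\; i\pi\,\mu'(x), \qquad x \in S^{\circ}.
\end{equation*}
Differentiating the Frostman equality $V^{\mu}(x) + Q(x) = c_{t}$ along $S^{\circ}$, and using the identity $(V^{\mu})'(x) = \mathrm{p.v.}\!\int d\mu(s)/(s-x)$ coming directly from the definition of $V^{\mu}$, one obtains $\mathrm{p.v.}\!\int d\mu(s)/(s-x) = -Q'(x)$ on $S^{\circ}$. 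Substituting this into the boundary-value formula gives
\begin{equation*}
F(x \pm i0) \;=\; \pm\, i\pi\,\mu'(x), \qquad x \in S^{\circ},
\end{equation*}
and therefore $F(x+i0)^{2} = F(x-i0)^{2} = -\pi^{2}\,\mu'(x)^{2}$ on $S^{\circ}$.

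Since $F^{2}$ admits matching continuous boundary values from both half-planes on $S^{\circ}$, a standard Painlev\'e/Morera argument provides a single-valued analytic extension $R$ of $F^{2}$ across $S^{\circ}$, defined on $\Omega$ minus the finite endpoint set $\partial S$. Reality of $R$ on $\Omega \cap \R$ holds because its common boundary values on $S^{\circ}$ are the real quantity $-\pi^{2}\mu'(x)^{2}$ and because $F$ is real on the real axis outside $S$ (both $C_{\mu}$ and $Q'$ are real there); the Schwarz reflection principle then propagates reality to all of $\Omega \cap \R$.

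The main obstacle, and the final step, is to show that each endpoint $a$ of $S$ is a removable singularity of $R$. At a regular (soft) endpoint the density satisfies $\mu'(x) \asymp c\sqrt{|x-a|}$, so $F(z) = O((z-a)^{1/2})$ and $R = F^{2}$ extends holomorphically to $a$ with a simple zero. At any singular endpoint allowed by the real-analytic setting, $\mu'$ still vanishes with a higher half-integer exponent, again leaving $R$ analytic. In either case the isolated singularity of $R$ at $a$ is bounded and hence removable, which produces the desired analytic function $R$ on all of $\Omega$. The delicate point is precisely this endpoint analysis, because one must invoke the fine regularity of $\mu$ at the edges of its support in the real-analytic case to rule out genuine poles of $F^{2}$.
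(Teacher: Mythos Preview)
The paper does not supply its own proof of this theorem; it is quoted verbatim from \cite[Theorem~2]{MOR2015} and used as a black box. Your outline is the standard route to such quadratic identities for Cauchy transforms of equilibrium measures, and it is essentially the argument given in \cite{MOR2015} (and in Deift's monograph, among others): match the $\pm$ boundary values of $F^{2}$ across $S^{\circ}$ via Plemelj and the differentiated Frostman equality, continue by Morera, then remove the endpoint singularities.

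One genuine point of friction: your removal of the endpoint singularities invokes the square-root (or higher half-integer) edge behaviour of $\mu'$, but in the usual logical order that behaviour is a \emph{consequence} of the very identity $(C_{\mu}+Q')^{2}=R$ you are trying to establish, not an input to it. Quoting \cite{KuML} at this stage risks circularity. A self-contained fix is available and is what is typically done: once $R=F^{2}$ is analytic on $\Omega\setminus\partial S$, it suffices to bound $|F(z)|$ by something $o(|z-a|^{-1/2})$ near each endpoint $a$. A priori one knows (from the real-analyticity of $Q$ and elementary manipulations of the Frostman conditions, independently of the algebraic equation) that $\mu'$ is bounded on $S$; this alone gives $|C_{\mu}(z)|=O\bigl(\log|z-a|^{-1}\bigr)$ and hence $|F^{2}(z)|=O\bigl((\log|z-a|^{-1})^{2}\bigr)$, so each endpoint is removable by Riemann's theorem. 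The same remark applies, more mildly, to your appeal to real-analyticity of $\mu'$ on $S^{\circ}$: Plemelj and the Morera continuation need only H\"older (or even $L^{p}$) regularity of the density, which can be obtained a priori without the algebraic equation.
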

\begin{remark}
If $Q$ is of the form \eqref{twochargest}-\eqref{z1z2} with the repellent charge on $\R$ (i.e. $\beta_2 = 0$), we have $Q(1)= +\infty$ and then
$1\notin\supp\mu_{t}$, see \cite[Remark I.1.4]{Saff:97}. It may be checked that the result in Theorem 3.3 is still applicable in that case, replacing $\R$ with a closed subset that excludes $1$.

\end{remark}
Throughout in the sequel, when we consider an equilibrium measure $\mu_t$ of a given mass $t$, we will denote its support by
$$S_{t}:=\supp(\mu_{t}).$$

In case $Q$ is of the form (\ref{twochargest}) and $t<T=1-\gamma$, following similar arguments as in \cite{OrSL2015} and \cite{OrSL2016}, it can be checked that the function $R$ is rational, and more precisely, after taking square root in (\ref{eq-MOR}), one may prove that
\begin{equation}\label{algebequat}
C_{\mu_{t}}(z)+Q'(z)=(T-t)\frac{\sqrt{A_{t}(z)}B_{t}(z)}{D(z)},\quad z\in\C\setminus (S_{t}\cup\{z_{1},\bar z_{1},z_{2},\bar z_{2}\}),
\end{equation}
with 
$$D(z)=(z-z_1)(z-\overline{z}_1)(z-z_2)(z-\overline{z}_2).$$
Here, $A_{t}$ is a monic polynomial of even degree, whose simple roots are the endpoints of $S_t$,
and the square root of $A_{t}$ is chosen so that it is analytic in $\C\setminus S_{t}$ and positive on the right of $S_{t}$. Finally
$B_{t}$ is a monic polynomial, and $\deg A_{t} B_{t}^2=6$. Since $A_{t}$ is at least of degree 2, $B_{t}$ is at most of degree 2. Both polynomials $A_{t}$ and $B_{t}$ are real and depend on $t$. In the sequel, their roots are respectively denoted by $a_1\leq a_{2}\leq\ldots\leq a_{n_{A}}$, and $b_j$, $j=1,\ldots,n_{B}$, where
$n_{A}:=\deg A_{t}$, $n_{B}:=\deg B_{t}$.

In addition, \eqref{algebequat} provides, by integrating and taking real parts, the following expression for the potential:
\begin{equation}\label{represpot}
V^{\mu_t} (z) + \Re Q(z) -c_{t}= (T-t)\Re \int_{a}^z \frac{\sqrt{A_{t}(x)}B_{t}(x)}{D(x)}dx,
\end{equation}
where $a$ is any point in the support $S_{t}$.
In view of (\ref{Frost1})-(\ref{Frost2}), when integrating in a gap of $S_{t}$, we have that the integral
$$\int_{a_{2j}}^{z}\frac{\sqrt{A_{t}(x)}B_{t}(x)}{D(x)}dx$$
is non-negative when $z\in(a_{2j},a_{2j+1})$ and vanishes when $z=a_{2j+1}$, it
implies that each gap of the support contains an odd number of zeros of $B_{t}$.
Moreover, inequality (\ref{Frost1}) implies that on the right of $a_{n_{A}}$, the largest root of $A_{t}$, there is an even number of zeros of $B_{t}$. The same holds true on the left of the smallest root $a_{1}$ (note that the integral in (\ref{represpot}) diverges as $z$ tends to
$\pm\infty$).

Equation \eqref{algebequat}, together with Plemelj formula (see e.g. \cite{Mushkelishvili}), or Stieltjes inversion formula,
\begin{equation*}
\mu'_{t}(x)=\frac{1}{\pi}\Im (C_{\mu_{t}})_{+}(x),\qquad x\in S_{t},
\end{equation*}
also provide the density of the equilibrium measure, namely,
\begin{equation*}
\mu'_t (x) = \frac{T-t}{\pi}\frac{\sqrt{|A_{t}(x)|}|B_{t}(x)|}{D(x)},\quad x\in S_t.
\end{equation*}
As in \cite{MOR2015} and \cite{OrSL2015}--\cite{OrSL2016}, we proceed
by studying the evolution of the equilibrium measure $\mu_t$ in the external field $Q$ in \eqref{twochargest} as the mass $t\in(0,T)$ approaches the limit value $T$.
In the next theorem, we show that the measures $\mu_{t}$ have a weak-* limit as $t$ tends to $T$, namely the equilibrium measure $\mu_{T}$, solution of the weakly admissible equilibrium problem.
\begin{theorem}\label{thm:waT}
One has:\\
i) The map
$t\in(0,T)\mapsto\mu_{t}$
is increasing and continuous with respect to the weak-* topology on the set of positive measures supported on the real axis.\\
ii) As $t$ tends to $T$, the measure
$\mu_{t}$ tends weak-* to a limit measure $\widetilde\mu_{T}$ of mass $T$ and support $\tilde S_{T}\subset\R$.\\
iii) The Cauchy transform of $\tilde\mu_{T}$ satisfies
\begin{equation}\label{algebequat2}
C_{\tilde\mu_{T}}(z)+Q'(z)=c\frac{\sqrt{A(z)}B(z)}{D(z)},\quad z\in\C\setminus (\tilde S_{T}\cup\{z_{1},\bar z_{1},z_{2},\bar z_{2}\}),
\end{equation}
where $c$ is some possibly complex constant, $A$ is a monic polynomial, whose simple roots are the (finite) endpoints of $\tilde S_T$,
$B$ is a monic polynomial of degree less than or equal to $2$, and $\deg A B^2<6$. Moreover, each gap of $\tilde S_{T}$ contains exactly one root of $B$.\\
iv) The measure $\tilde\mu_{T}$ has a density, given by
\begin{equation}\label{densityeqmeas2}
\tilde\mu'_T (x) = \frac{|c|}{\pi}\frac{\sqrt{|A(x)|}|B(x)|}{D(x)},\quad x\in \tilde S_T.
\end{equation}
v) The measure $\widetilde\mu_{T}$ integrates the logarithm at infinity, that is
\begin{equation}\label{log-inf}
\int\log(1+|x|)d\widetilde\mu_{T}(x)<\infty.
\end{equation}
vi) The measure $\tilde\mu_{T}$ has finite energy $I(\tilde\mu_{T})$.
\\
vii) As $t$ tends to $T$, the equilibrium constant $c_{t}$, see \eqref{eq-ct}, tends to a finite limit $c_{T}$.
\\
viii) The limit measure $\widetilde\mu_{T}$ coincides with the equilibrium measure $\mu_{T}$ of mass $T$ corresponding to the weakly admissible case.
\end{theorem}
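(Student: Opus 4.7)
The overall strategy is to pass to the limit $t\uparrow T$ in the algebraic equation \eqref{algebequat} and in the Frostman conditions \eqref{Frost1}--\eqref{Frost2}, exploiting monotonicity of the admissible family $\{\mu_{t}\}_{t\in(0,T)}$. For item i), I would invoke the Buyarov--Rakhmanov framework \cite{BR:99}: monotonicity ($\mu_{s}\le\mu_{t}$ for $s\le t$) and weak-* continuity on every compact subinterval of $(0,T)$ are by now classical, and are used in closely related settings in \cite{MOR2015,OrSL2015,OrSL2016}. For item ii), define $\widetilde\mu_{T}$ as the set-theoretic monotone limit, $\widetilde\mu_{T}(E)=\lim_{t\to T^{-}}\mu_{t}(E)$; monotone convergence immediately yields $\widetilde\mu_{T}(\R)=T$. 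To upgrade this to genuine weak-* convergence (i.e.\ rule out any mass escape to infinity) I would combine the weak admissibility bound \eqref{condwadmiss} with the minimality inequality $I_{Q}(\mu_{t})\le I_{Q}(\sigma_{t})$ for a fixed compactly supported family $\sigma_{t}$ of mass $t$ to obtain a uniform bound on $\int\log(1+|x|)\,d\mu_{t}$, hence tightness.

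For items iii) and iv), I would square \eqref{algebequat} to eliminate the branch ambiguity:
\begin{equation*}
\bigl(C_{\mu_{t}}(z)+Q'(z)\bigr)^{2}=(T-t)^{2}\,\frac{A_{t}(z)B_{t}(z)^{2}}{D(z)^{2}}.
\end{equation*}
Pointwise off $\R$, the left-hand side converges to $(C_{\widetilde\mu_{T}}(z)+Q'(z))^{2}$, forcing the right-hand side to converge to a rational function of the form $P(z)/D(z)^{2}$. An expansion at infinity, using that $C_{\widetilde\mu_{T}}(z)+Q'(z)=O(1/z^{2})$ because the $1/z$ terms cancel ($\widetilde\mu_{T}(\R)=T$), gives $\deg P<6$. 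Factoring $P=c^{2}A(z)B(z)^{2}$, with $A$ monic square-free carrying the finite real endpoints of $\widetilde S_{T}$ and $B$ monic of degree at most $2$, yields \eqref{algebequat2}, and Stieltjes inversion then gives iv). The ``exactly one root of $B$ per gap'' claim follows by passing to the limit in the odd-number condition known for $\mu_{t}$, combined with the degree bound $\deg AB^{2}<6$, which leaves just enough zeros of $B$ to fill each gap once.

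For v), the density \eqref{densityeqmeas2} decays as $|x|^{-\alpha}$ with $\alpha=4-\tfrac12\deg A-\deg B>1$ by $\deg AB^{2}<6$, giving $\int\log(1+|x|)\,d\widetilde\mu_{T}<\infty$. Item vi) follows from this decay together with lower semicontinuity of the logarithmic energy (or by direct computation from the explicit density). For vii), I would pass to the limit in \eqref{eq-ct} term by term, the log-integrable bound from v) providing the domination needed to conclude $I(\mu_{t})\to I(\widetilde\mu_{T})$ and $\int Q\,d\mu_{t}\to\int Q\,d\widetilde\mu_{T}$. Finally, for viii), I would take the limit $t\uparrow T$ in the integrated Frostman identity \eqref{represpot}, obtaining
\begin{equation*}
V^{\widetilde\mu_{T}}(x)+Q(x)=c_{T}\text{ on }\widetilde S_{T},\qquad V^{\widetilde\mu_{T}}(x)+Q(x)\ge c_{T}\text{ q.e.\ on }\R,
\end{equation*}
and invoke the characterization of the weakly admissible equilibrium measure by these variational inequalities (Proposition \ref{char-Frost} of Section \ref{Pot}) to identify $\widetilde\mu_{T}=\mu_{T}$.

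The most delicate step is item iii): the prefactor $(T-t)^{2}$ vanishes, so in order to produce a nonzero limit on the right-hand side at least one root of $A_{t}$ (or $B_{t}$) must escape to infinity, presumably at rate $(T-t)^{-2}$. Tracking precisely which roots escape and ruling out accidental degenerations, so that the factorization $P=c^{2}AB^{2}$ has exactly the asserted structure, is the main technical burden. The tightness argument in item ii) is the other non-routine ingredient and depends essentially on the weak admissibility \eqref{condwadmiss} of $Q$.
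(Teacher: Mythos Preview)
Your overall strategy matches the paper's: pass to the limit in \eqref{algebequat} and in the Frostman conditions, using monotonicity from \cite{BR:99} and the characterization in Proposition~\ref{char-Frost}. Items iii)--vii) are handled essentially as in the paper (your squaring in iii) is a harmless variant; the paper works directly with \eqref{algebequat}).

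There is, however, a gap in your argument for ii). The energy comparison $I_{Q}(\mu_{t})\le I_{Q}(\sigma_{t})$ does \emph{not} yield a uniform bound on $\int\log(1+|x|)\,d\mu_{t}$: writing $Q(x)\ge T\log(1+|x|)+M'$ and $-I(\mu_{t})\le 2t\int\log(1+|x|)\,d\mu_{t}$, one only gets
\[
2(T-t)\int\log(1+|x|)\,d\mu_{t}\le I_{Q}(\sigma_{t})-C',
\]
which blows up as $t\uparrow T$. Fortunately this step is unnecessary: once you have $\mu_{s}\le\mu_{t}$ for $s<t$, the setwise limit $\widetilde\mu_{T}$ is a finite measure of mass $T$, and dominated convergence (with dominating measure $\widetilde\mu_{T}$) already gives $\int f\,d\mu_{t}\to\int f\,d\widetilde\mu_{T}$ for bounded continuous $f$. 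The paper instead proves tightness directly and very cleanly: take $K=S_{T-\epsilon}$, which is compact since $T-\epsilon<T$ is admissible; then $\mu_{t}(\R\setminus K)=0$ for $t\le T-\epsilon$ and $\mu_{t}(\R\setminus K)\le t-(T-\epsilon)<\epsilon$ for $t>T-\epsilon$ by monotonicity.

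For viii), the paper is more explicit than your sketch: to pass to the limit in \eqref{Frost1}--\eqref{Frost2} one needs a principle of descent for measures with unbounded support (Theorem~\ref{th-descent}), whose hypothesis of log-tightness is supplied by \eqref{log-inf} from item v). For the equality on $\widetilde S_{T}$, one picks $x_{n}\in S_{t_{n}}$ with $x_{n}\to x\in\widetilde S_{T}$ and applies \eqref{descent} along that sequence. Your plan of ``taking the limit in \eqref{represpot}'' would also work, but requires the same log-tightness input to control the potentials.
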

\begin{proof}
Assertion i) is known in the case of an external field satisfying the condition at infinity
$\lim_{|x|\to\infty}Q(x)/\log|x|=\infty$, see \cite[Theorem 2]{BR:99}. It may be checked that the assertion still holds true in the admissible case \eqref{condadmiss},
with $t\in(0,T)$.
To derive assertion ii), we first prove that the family of measures $\mu_{t}$, $t\in(0,T)$, is tight, that is,
$$\forall\epsilon>0,\quad\exists~\text{compact set }K\subset\R,\quad
\forall t\in(0,T),\quad\mu_{t}(\R\setminus K)\leq\epsilon.$$
Indeed, choosing $K=S_{T-\epsilon}$, it is clear that $\mu_{t}(\R\setminus K)=0$ for $t\leq T-\epsilon$ and $\mu_{t}(\R\setminus K)<\epsilon$ for $t> T-\epsilon$. Thus, by Prohorov theorem (cf. \cite[Theorem 9.3.3]{DUD}), there is a subsequence $(\mu_{t_{n}})_{n}$ of $(\mu_{t})_{t\in(0,T)}$ weakly convergent to a measure $\tilde\mu_{T}$ of mass $T$. Since $\mu_{t}$ is increasing with $t$, the whole sequence $\mu_{t}$ actually converges to $\tilde\mu_{T}$.

Equation \eqref{algebequat2} in assertion iii) is obtained as the limit of \eqref{algebequat} as $t$ tends to $T$. Indeed, outside of $\tilde S_{T}$, $C_{\mu_{t}}$ tends pointwise to $C_{\tilde\mu_{T}}$, by the weak-* convergence of $\mu_{t}$ to $\tilde\mu_{T}$. On the right-hand side, since the multiplicative constant $T-t$ tends to zero, there is at least one root of $A_{t}B_{t}$ that tends to infinity, so that $\deg AB^{2}<6$. Also $\deg B\leq2$ since $\deg B_{t}\leq2$ for $t\in(0,T)$. The constant $c$ comes from the roots of $A_{t}B_{t}$ that tend to infinity as $t$ tends to $T$. For instance, one checks that, if the largest root of $A_{t}$ tends to $+\infty$ (so that $\tilde S_{T}$ is unbounded on the right) then $c\in i\R_{+}$ while if
the smallest root of $A_{t}$ tends to $-\infty$ then $c\in\R_{+}$. If both roots, on the left and on the right, tend to infinity, then $c\in i\R_{+}$.

The expression \eqref{densityeqmeas2} for the density of $\tilde\mu_{T}$ follows from \eqref{algebequat2} and Plemelj formula.
Assertion v) clearly follows from the expression \eqref{densityeqmeas2} for the density of $\tilde\mu_{T}$.
Finally, we verify that $I(\tilde\mu_{T})$ is finite. Since the density of $\tilde\mu_{T}$ is continuous, its potential $V^{\tilde\mu_{T}}$ is continuous as well. We may write
\begin{eqnarray*}
|V^{\tilde\mu_{T}}(x)| & \leq & \int|\log|x-y||d\tilde\mu_{T}(y)\\
& = & \int_{|x-y|\geq1}\log|x-y|d\tilde\mu_{T}(y)-\int_{y=x-1}^{x+1}\log|x-y|d\tilde\mu_{T}(y)\\
& \leq & T\log(1+|x|)+\int\log(1+|y|)d\tilde\mu_{T}(y)+\int_{y=-1}^{1}|\log|y||d\tilde\mu_{T}(y+x),
\end{eqnarray*}
where we have used that $|x-y|\leq(1+|x|)(1+|y|)$ in the last inequality. For $|x|$ large, the last integral is of order
$$\int_{y=-1}^{1}\frac{|\log|y||}{(y+x)^{3/2}}dy\leq\frac{2}{(|x|-1)^{3/2}}.$$
From the above follows in particular that, for $|x|$ large, the order of growth of the potential $V^{\tilde\mu_{T}}(x)$ is essentially bounded by $T\log(|x|)$. Together with the expression of the density of $\tilde\mu_{T}$, it implies that the energy
$$I(\tilde\mu_{T})=\int V^{\tilde\mu_{T}}(x)d\tilde\mu
_{T}(x)$$
is finite.

For assertion vii), recall that, for $t\in(0,T)$,
$$c_{t}=t^{-1}\left(I(\mu_{t})+\int Qd\mu_{t}\right).$$
In view of (\ref{log-inf}) and the expression (\ref{twochargest}) for $Q$, the integral $\int Q(x)d\tilde\mu_{T}(x)$ converges. Thus, for any $\epsilon>0$, there exists a compact set $K\subset\R$ such that
$$0\leq\int_{\R\setminus K} Q(x)d\tilde\mu_{T}(x)\leq\epsilon.$$
Since $\mu_{t}$ is increasing to $\tilde\mu_{T}$, the same inequalities hold true for all $\mu_{t}$, $t\in(0,T)$.
Together with the weak-* convergence of $\mu_{t}$ to $\tilde\mu_{T}$, it implies that
$$\int Qd\mu_{t}\to\int Qd\tilde\mu_{T}\quad\text{as}\quad t\to T.$$
Denoting by $\log^{+}$ and $\log^{-}$ the positive and negative parts of the $\log$ function, we have, as $t\to T$,
\begin{eqnarray*}
\mu_{t}'(z)\mu_{t}'(y)\log^{+}|z-y|\uparrow\tilde\mu_{T}'(z)\tilde\mu_{T}'(y)\log^{+}|z-y|,\\[10pt]
\mu_{t}'(z)\mu_{t}'(y)\log^{-}|z-y|\uparrow\tilde\mu_{T}'(z)\tilde\mu_{T}'(y)\log^{-}|z-y|,
\end{eqnarray*}
almost everywhere, with respect to the product Lebesgue measure, on $\tilde S_{T}\times\tilde S_{T}$. Here, we notice that the density $\mu_{t}'$ tends to $\tilde\mu_{T}'$ in $L^{1}(\R)$, so a subsequence tends pointwise almost everywhere to $\tilde\mu_{T}'$, and since the sequence is increasing, actually the entire sequence tends pointwise a.e. to $\tilde\mu_{T}'$. By monotone convergence, we obtain that
$I(\mu_{t})$ tends to $I(\tilde\mu_{T})$, and that $c_{t}$ tends to
$$c_{T}=T^{-1}\left(I(\tilde\mu_{T})+\int Qd\mu_{T}\right).$$
It remains to show assertion viii), that is $\widetilde\mu_{T}=\mu_{T}$. For each $t\in(0,T)$, we know from \eqref{Frost1} that
$$
V^{\mu_t}(x)+Q(x)  \geq c_t,\quad x\in \R.
$$
We apply the principle of descent, see Theorem \ref{th-descent}, to the sequences of positive measures $\widetilde\mu_{T}-\mu_{t}$ as $t$ tends to $T$, where we remark that assumption \eqref{log-t} follows from \eqref{log-inf}. Together with assertion vii), this gives
\begin{equation}\label{Frost1-tilde}
V^{\widetilde\mu_T}(x)+Q(x)  \geq c_T,\quad x\in \R.
\end{equation}
For each $t\in(0,T)$, we also know, see \eqref{Frost2}, that
$$
V^{\mu_t}(x)+Q(x)  = c_t,\quad x\in S_{t}.
$$
Let $x\in \widetilde{S}_{T}$, the support of $\widetilde\mu_{T}$, and let $t_{n}$ tends to $T$. From the inclusion
$$\widetilde{S}_{T}\subset\bigcap_{N=1}^{\infty}\overline{\bigcup_{n=N}^{\infty}S_{t_{n}}},$$
see \cite[Corollary 4 p.9]{Lan}, there exists a sequence $x_{n}\in S_{t_{n}}$ tending to $x$. Applying the inequality \eqref{descent} to the sequences $\mu_{t_{n}}$,
we derive, together with assertion vii) and the continuity of $Q$, that
\begin{equation}\label{Frost2-tilde}
V^{\widetilde\mu_T}(x)+Q(x)  = c_T,\quad x\in \widetilde{S}_{T}.
\end{equation}
From \eqref{Frost1-tilde}, \eqref{Frost2-tilde} and Proposition \ref{char-Frost} follows that $\widetilde\mu_{T}=\mu_{T}$.
\end{proof}

From now on, we will only denote by $\mu_{T}$ the common measure $\tilde\mu_{T}=\mu_{T}$, and by $S_{T}$ its support.

\subsection{The signed equilibrium measure}
In this section we assume that $t=T=1-\gamma$.
Let us denote by $\eta_{T}$ the opposite to the balayage of the measure (\ref{def-pair}) on $\R$:
$$\eta_T = \Bal(\delta_{z_1},\R) - \gamma \Bal(\delta_{z_2},\R).$$
Note that $\eta_{T}$ is the {\it signed equilibrium measure} in the external field $Q$, see Section \ref{sgn-meas}.
Throughout, we will denote the support of the positive part $\eta_{T}^{+}$ of $\eta_{T}$ by
$$S_{T}^{+}=\supp \eta_T^+.$$
We know from Lemma \ref{lem:realbalay} that
\begin{equation}\label{signedwadens1}
\eta'_T (x) = \frac{1}{\pi}\left(\frac{\beta_1}{(x+1)^2+\beta_1^2}-\frac{\gamma\beta_2}{(x-1)^2+\beta_2^2}\right).
\end{equation}
Since by Proposition 4.11, we have that $S_T \subset S_{T}^{+}$, one may easily derive situations where the support of the equilibrium measure in this weakly admissible external field is distinct from the whole real axis, see Figure \ref{fig:compT} 
for such cases.
\begin{figure}[ht]
    \begin{center}
        \includegraphics[scale=0.4]{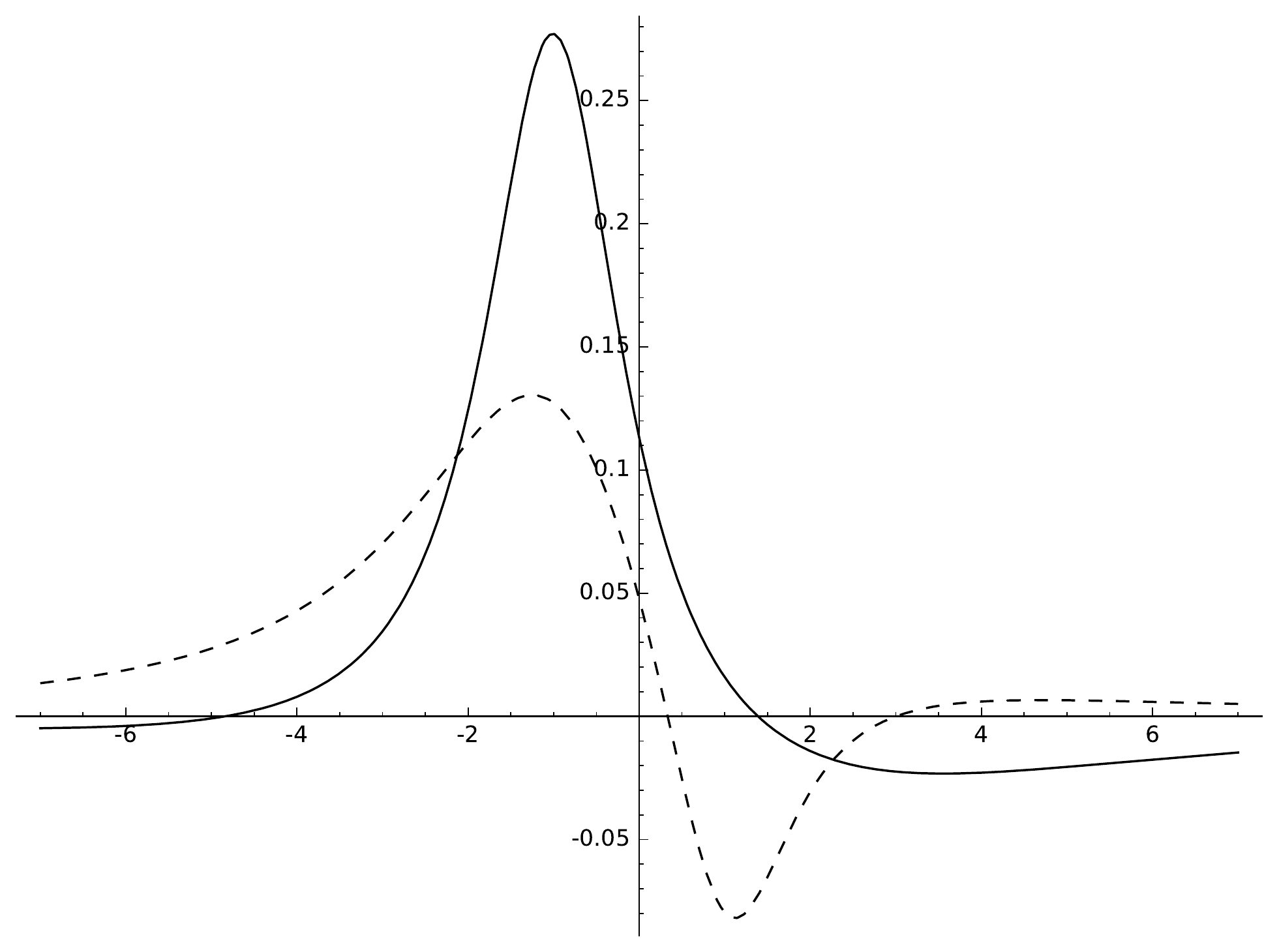}
    \end{center}
    \caption{A density (solid line) such that $S_T^{+}$ is compact ($\gamma = .75,\beta_1 = 1,\beta_2 = 5$), and a density (dashed line) such that $S_T^{+}$ is disconnected ($\gamma = 0.5,\beta_1 = 2,\beta_2 =1$)}
    \label{fig:compT}
\end{figure}

We first give a basic lemma about the signed equilibrium measure $\eta_{T}$.
\begin{lemma}\label{lem:threshold}
There exists a value $\Gamma_1 \in (0,1)$ such that $\eta_{T}$ is a positive measure for $\gamma \leq \Gamma_1$, and a signed measure (i.e. has a negative part in its Jordan decomposition) for $\gamma > \Gamma_1$.
\end{lemma}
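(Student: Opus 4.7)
The strategy is to read off from the explicit density formula \eqref{signedwadens1} a pointwise criterion for positivity of $\eta_T$, and to define $\Gamma_1$ as the threshold that separates the positive from the signed regime. Write $\eta'_T(x)=\pi^{-1}(f(x)-\gamma g(x))$ with
$$
f(x)=\frac{\beta_1}{(x+1)^2+\beta_1^2},\qquad g(x)=\frac{\beta_2}{(x-1)^2+\beta_2^2},
$$
and note that $f,g$ are continuous and strictly positive on $\R$. The condition that $\eta_T$ be a positive measure is equivalent to $\gamma\leq h(x)$ for all $x\in\R$, where
$$
h(x):=\frac{f(x)}{g(x)}=\frac{\beta_1\bigl((x-1)^2+\beta_2^2\bigr)}{\beta_2\bigl((x+1)^2+\beta_1^2\bigr)}.
$$
Thus the natural candidate is $\Gamma_1:=\inf_{x\in\R}h(x)$, and the whole proof reduces to showing $\Gamma_1\in(0,1)$.

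To see $\Gamma_1>0$, observe that $h$ is continuous and strictly positive on any compact set, and $h(x)\to\beta_1/\beta_2>0$ as $|x|\to\infty$, so $h$ is bounded below by a positive constant.

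To see $\Gamma_1<1$, I use a global mass argument. Both $f$ and $g$ are (up to normalization) Poisson kernels, so $\int_{\R}f\,dx=\int_{\R}g\,dx=\pi$ and hence $\int_{\R}(f-g)\,dx=0$. Since $f$ is maximal at $x=-1$ and $g$ is maximal at $x=1$, the continuous function $f-g$ is not identically zero, so it must take strictly negative values at some point $x_0\in\R$. At such a point $h(x_0)<1$, giving $\Gamma_1<1$.

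Finally, the dichotomy in the lemma follows immediately. For $\gamma\leq\Gamma_1$, one has $\gamma\leq h(x)$ for every $x\in\R$, so $\eta'_T\geq0$ and $\eta_T$ is a positive measure. For $\gamma>\Gamma_1$, by definition of infimum there exists $x_0\in\R$ with $h(x_0)<\gamma$; by continuity of $\eta'_T$, there is an open neighborhood of $x_0$ on which $\eta'_T<0$, so the Jordan decomposition of $\eta_T$ has a nontrivial negative part. The only mildly delicate point is the strict inequality $\Gamma_1<1$, which is handled by the zero-total-mass observation above; all remaining steps are routine consequences of the explicit formula \eqref{signedwadens1}.
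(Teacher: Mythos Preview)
Your proof is correct. Defining $\Gamma_1=\inf_{x\in\R}h(x)$ and using the zero-mean observation $\int_{\R}(f-g)\,dx=0$ (both being Poisson kernels of total mass $\pi$) to force $f-g$ to take strictly negative values is a clean way to get $\Gamma_1<1$; the lower bound $\Gamma_1>0$ follows from continuity and the limit $h(x)\to\beta_1/\beta_2>0$. The dichotomy for $\gamma\leq\Gamma_1$ versus $\gamma>\Gamma_1$ is then immediate, and you do not even need to know whether the infimum is attained.

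The paper takes a more computational route: it writes the numerator of $\eta_T'$ as a quadratic in $x$, sets its discriminant to zero, and solves the resulting quadratic in $\gamma$ to obtain the explicit formula \eqref{Gamma1}. Your argument is shorter and more conceptual, and it avoids any algebra. What the paper's approach buys, however, is the explicit value of $\Gamma_1$ (later rewritten as \eqref{G1bis}) and the identification of the double zero of $\eta_T'$ with the point $x_2$; both of these are used in Proposition~\ref{thm:phasessigned} and in the comparison $\Gamma_1<\Gamma_2$ preceding Theorem~\ref{thm:main}. So while your proof fully establishes the lemma as stated, you would still need to recover the explicit expression for $\Gamma_1$ (and the location of the minimizer of $h$) to connect with the rest of the paper.
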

\begin{proof}
It is clear that $\eta_{T}$ is positive when $\gamma$ is close to 0.
We seek a threshold value of $\gamma$ such that the density of the signed equilibrium measure $\eta'_T$ has a double real root. From \eqref{signedwadens1}, we must look for values of $\gamma$ for which the discriminant of the quadratic equation
\begin{equation*}
(\beta_1 - \gamma\beta_2)x^2-2(\beta_1 + \gamma\beta_2)x+\beta_1(1+\beta_2^2)-\gamma\beta_2(1+\beta_1^2)=0
\end{equation*}
vanishes. This yields:
$$\beta_1\beta_2(\beta_{1}\beta_{2}\gamma^{2}-(4+\beta_{1}^{2}+\beta_{2}^{2})\gamma+\beta_{1}\beta_{2})=0.$$
Since $\beta_1\beta_{2} \neq 0$, the third factor necessarily vanishes. As a polynomial in $\gamma$, it has
two positive real roots, one of them in $(0,1)$, namely
\begin{equation}\label{Gamma1}
\Gamma_{1}:=\frac{1}{2\beta_1\beta_2}
\left((\beta_1^2+\beta_2^2+4)-\sqrt{(\beta_1^2+\beta_2^2+4)^2-4\beta_{1}^{2}\beta_{2}^{2}}\right).
\end{equation}
It is easy to see that for $\gamma \in (0,\Gamma_1)$, the density $\eta'_T$ is positive on the whole real axis, while for $\gamma \in (\Gamma_1,1),$ it is negative in some subset of $\R$.
\end{proof}

When $\beta_1/\beta_2 < 1$, Theorem \ref{thm:compsupp} also shows the existence of a second threshold value, namely $\beta_1/\beta_2$,
such that the support of $\eta_{T}^{+}$ is unbounded (but disconnected) for $\Gamma_1<\gamma\leq\beta_1/\beta_2$ and bounded for $\beta_1/\beta_2<\gamma<1$.

We now describe the evolution of the support of the signed equilibrium measure (and, in turn, also derive information on the support of the equilibrium measure) as $\gamma$ moves from 0 to 1. At this point, let us introduce the semi-circle $\CC$, which contains $z_{1}$ and $z_{2}$ and whose center $x_{0}$ lies on the real axis, as depicted in Figure \ref{intro-circle}, and the associated points $x_{1},x_{2}$. It is easy to verify that
\begin{equation}\label{x0x1x2}
x_{0}=\frac14(\beta_{2}^{2}-\beta_{1}^{2}),\quad x_{1,2}=x_{0}\mp\frac14
\sqrt{(\beta_{2}^{2}-\beta_{1}^{2})^{2}+8(\beta_{1}^{2}+\beta_{2}^{2}+2)},
\end{equation}
where, in the second equation, $x_{1}$ (resp. $x_{2}$) corresponds to a $-$ (resp. $+$) sign.
The circle and the above points will be useful in our subsequent analysis.
\begin{figure}[ht]
\begin{center}
\def\svgwidth{10cm}
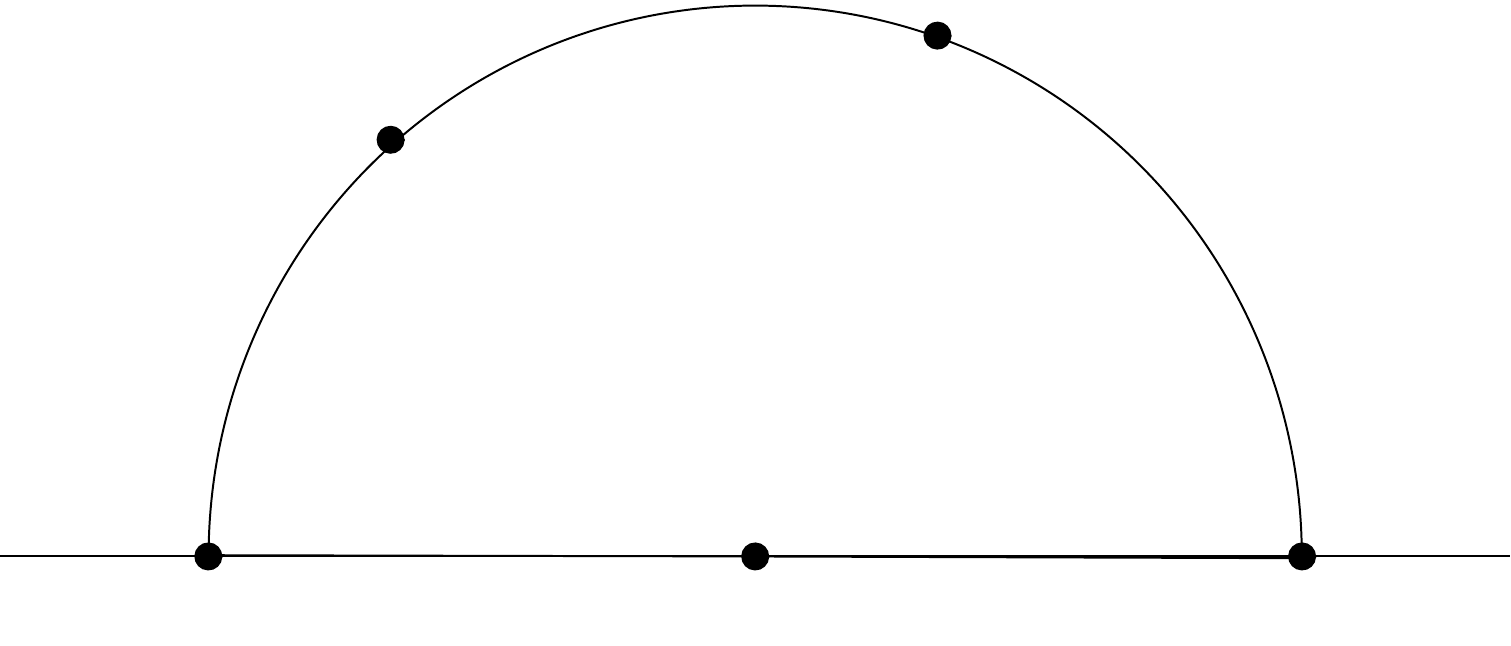
\caption{Semi-circle $\CC$.}
\label{intro-circle}
\end{center}
\end{figure}

\begin{proposition}\label{thm:phasessigned}
As $\gamma$ moves from 0 to 1, the following phases occur:

\begin{itemize}

\item {\textbf{Phase 1:}} $\gamma \in (0, \Gamma_1)$, with $\Gamma_1$ given by \eqref{Gamma1} : $\eta_T$ is a positive measure in $\R$. Thus,
$\mu_T = \eta_T$ and $S_T = \R$.

\item {\textbf{1st Transition:}} $\gamma = \Gamma_1$. The density of $\eta_T$ has a double real zero at $x_{2}$. One still has $\mu_T = \eta_T$ and $S_T = \R$.

\item {\textbf{Phase 2:}} $\gamma \in (\Gamma_1,\min(\beta_{1}/\beta_{2},1))$ : $\eta_T$ has a negative part supported in some compact interval $[A_1,A_2]$. Thus, $S_T \subset (-\infty, A_1] \cup [A_2, +\infty)$.

\end{itemize}

If $\beta_1 < \beta_2$,
we have the additional

\begin{itemize}

\item {\textbf{2nd Transition:}} $\gamma = \beta_{1}/\beta_{2}$. In this case, $S^+_T = (-\infty, A_1]$, implying that $S_T \subset (-\infty, A_1]$.

\item {\textbf{Phase 3:}} $\gamma \in (\beta_{1}/\beta_{2},1)$. Now, $S^+_T = [\widetilde{A}_2, A_1]$ and, therefore, $S_T \subset [\widetilde{A}_2, A_1]$.

\end{itemize}
\end{proposition}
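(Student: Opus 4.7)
The plan is to reduce everything to a sign analysis of the single polynomial
$$N(x) := \beta_1[(x-1)^2+\beta_2^2] - \gamma\beta_2[(x+1)^2+\beta_1^2] = (\beta_1-\gamma\beta_2)x^2 - 2(\beta_1+\gamma\beta_2)x + \beta_1(1+\beta_2^2) - \gamma\beta_2(1+\beta_1^2),$$
obtained by clearing the strictly positive common denominator in \eqref{signedwadens1}, so that $\sgn\eta_T'(x)=\sgn N(x)$ on $\R$. The structure of $\supp\eta_T^{\pm}$ is then read off from the roots of $N$ together with the sign of its leading coefficient, and I would then invoke Proposition~4.11 to transfer information from $\supp\eta_T^+$ to $S_T=\supp\mu_T$. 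Two critical values of $\gamma$ govern the picture: $\gamma=\beta_1/\beta_2$, where the leading coefficient of $N$ vanishes, and $\gamma=\Gamma_1$, where its discriminant vanishes. Writing $p(\gamma)=\beta_1\beta_2\gamma^2-(4+\beta_1^2+\beta_2^2)\gamma+\beta_1\beta_2$ as in the proof of Lemma~\ref{lem:threshold}, the two roots $\Gamma_1$ and $\Gamma_1'$ of $p$ are reciprocal, so $\Gamma_1<1<\Gamma_1'$; moreover, $p(\beta_1/\beta_2)=-4\beta_1/\beta_2<0$, which places $\Gamma_1<\min(\beta_1/\beta_2,1)$ in all cases and hence makes each of the parameter intervals in the proposition non-empty.

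For Phase~1 ($\gamma\in(0,\Gamma_1)$) the discriminant is negative (since $p>0$ there) and the leading coefficient positive, so $N>0$ on $\R$. Thus $\eta_T$ is a positive measure satisfying the Frostman \emph{equality} \eqref{signedequil}, and Proposition~\ref{char-Frost} identifies $\mu_T=\eta_T$ with $S_T=\R$. At the first transition $\gamma=\Gamma_1$, $N$ has a double real root $x_*$; from $N'(x_*)=0$ I read $\gamma=\beta_1(x_*-1)/[\beta_2(x_*+1)]$, and substituting back into $N(x_*)=0$ and clearing denominators yields the $\gamma$-free quadratic
$$2x_*^2-(\beta_2^2-\beta_1^2)x_*-(2+\beta_1^2+\beta_2^2)=0,$$
whose roots are exactly the points $x_1$ and $x_2$ of \eqref{x0x1x2}. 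Positivity of $\gamma$ forces $x_*>1$, so the double root is $x_2$ (one verifies $x_1<-1<0<1<x_2$, and the other root $x_1$ would correspond to $\gamma=\Gamma_1'>1$). Since $\eta_T$ is still $\geq 0$ (a double zero is not a sign change), the same argument as in Phase~1 gives $\mu_T=\eta_T$ and $S_T=\R$.

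For Phase~2 ($\Gamma_1<\gamma<\min(\beta_1/\beta_2,1)$) both the discriminant and the leading coefficient are positive, so $N$ has two distinct real roots $A_1<A_2$ with $N<0$ on $(A_1,A_2)$; hence $\supp\eta_T^-=[A_1,A_2]$ and Proposition~4.11 yields $S_T\subset(-\infty,A_1]\cup[A_2,+\infty)$. For the last two items (requiring $\beta_1<\beta_2$): at $\gamma=\beta_1/\beta_2$, the leading coefficient vanishes and $N$ becomes linear with slope $-4\beta_1<0$ and unique zero $x_0=(\beta_2^2-\beta_1^2)/4$; applying Vieta's formulas to the Phase~2 roots as $\gamma\to(\beta_1/\beta_2)^-$ then gives $A_2\to+\infty$ and $A_1\to x_0$, in agreement with $\supp\eta_T^+=(-\infty,x_0]$. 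For Phase~3 ($\beta_1/\beta_2<\gamma<1$) the leading coefficient is negative while the discriminant remains positive (since $p<0$ throughout $(\Gamma_1,\Gamma_1')\supset(\Gamma_1,1)$), so $N$ has two real roots $\widetilde{A}_2<A_1$ with $N>0$ in between; thus $\supp\eta_T^+=[\widetilde{A}_2,A_1]$ and $S_T\subset[\widetilde{A}_2,A_1]$.

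The only genuinely computational step is the algebraic identification at the first transition of the double root of $N$ with the geometric point $x_2$ on the semi-circle $\CC$; this is handled by the elimination outlined above. Everything else is a mechanical case analysis of the signs of a one-parameter family of real quadratics, organized around the two critical values $\Gamma_1$ and $\beta_1/\beta_2$, and combined with the inclusion $S_T\subset\supp\eta_T^+$ from Proposition~4.11 together with the Frostman characterization of Proposition~\ref{char-Frost} used in Phase~1 and at the first transition.
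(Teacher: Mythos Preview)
Your proof is correct and in its broad strokes follows the same strategy as the paper: analyze the sign of the numerator $N(x)$ of $\eta_T'$, invoke Proposition~4.11 for the inclusion $S_T\subset\supp\eta_T^+$, and use the Frostman characterization in Phase~1 and at the first transition. You also supply details the paper leaves implicit, such as the verification $\Gamma_1<\min(\beta_1/\beta_2,1)$ via $p(\beta_1/\beta_2)<0$ and the Vieta limits at the second transition.

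The one substantive methodological difference concerns the identification of the double root with $x_2$. The paper argues geometrically: for $\gamma\in(0,\Gamma_1)$ the two conjugate roots $b,\bar b$ of $N$ satisfy $|b-x_0|=\text{const}$ equal to the radius of $\CC$, so $b$ moves along the arc of $\CC$ from $z_2$ (at $\gamma=0$) to a real endpoint; since $\Re b$ is increasing in $\gamma$, that endpoint is $x_2$. You instead eliminate $\gamma$ from the system $N(x_*)=N'(x_*)=0$ to obtain the $\gamma$-free quadratic $2x_*^2-(\beta_2^2-\beta_1^2)x_*-(2+\beta_1^2+\beta_2^2)=0$, whose roots are precisely $x_1,x_2$, and then discard $x_1$ because it corresponds to $\gamma=\Gamma_1'>1$. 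Both routes are valid; the paper's has the side benefit of establishing the motion of $b$ along $\CC$, which is reused in the proof of Theorem~\ref{thm:main}, whereas yours is a cleaner, self-contained computation for the proposition at hand. One small imprecision: the phrase ``Positivity of $\gamma$ forces $x_*>1$'' is not literally true (it only gives $x_*>1$ or $x_*<-1$); it is the parenthetical remark that $x_1$ corresponds to $\Gamma_1'>1$ which actually pins down $x_*=x_2$.
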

\begin{proof}
When $\gamma\in[0,\Gamma_{1})$, we know by Lemma \ref{lem:threshold} that $\eta_{T}$ is a positive measure so it is clear that
$\eta_T=\mu_{T}$ and $S_{T}=\R$. During this phase, in the density (\ref{densityeqmeas2}) of $\mu_{T}$ (the fact that $\mu_{T}$ coincides with $\tilde\mu_{T}$ is shown in Theorem \ref{thm:waT}), the polynomial $A$ is constant, equal to 1, and $\deg B\leq2$. The polynomial $B$ cannot be constant (otherwise, as $\gamma$ goes from 0 to $\Gamma_{1}$, $\eta_{T}$ would be a multiple of a constant measure) nor be of degree 1, since $\eta_{T}'$ does not vanish on $\R$. So $B$ is of degree 2 with a pair of conjugate roots denoted by $b$ and $\bar b$. When $\gamma=0$, the measure $\eta_{T}$ is independent of $z_{2}$, so $B(z)$ should cancel the factors $(z-z_{2})(z-\bar z_{2})$ in $D(z)$, thus $b=z_{2}$. For $\gamma\in(0,\Gamma_{1})$, we know from (\ref{signedwadens1}) that $b$ and $\bar b$ are the conjugate roots of
\begin{equation}\label{conjroots}
\beta_{1}((x-1)^{2}+\beta_{2}^{2})-\gamma\beta_{2}((x+1)^{2}+\beta_{1}^{2}).
\end{equation}
It is then easy to show that the modulus $|b-x_{0}|$ is independent from $\gamma$, and equal to the radius of $\CC$. Hence, $b$ stays on $\CC$, starting from $z_{2}$ when $\gamma=0$, arriving at $x_{1}$ or $x_{2}$ when $\gamma=\Gamma_{1}$. Since \eqref{conjroots} implies that $\Re b = (b+\overline{b})/2$ is increasing with $\gamma$, the final location of $b = \overline{b}$ in this phase can only be $x_{2}$, which shows the assertion about the location of the double zero at the first transition.
The assertions about the subsequent phases easily follow from the expression (\ref{signedwadens1}) of $\eta_{T}$.
\end{proof}
\begin{remark}\label{rem:endpoints}
In the last item (Phase 3), we used the notation $\widetilde{A}_2$ for the left endpoint in order to exhibit the move of the initial endpoint $A_2$ when $\gamma$ grows from $\Gamma_1$ to $1$, namely: when $\gamma = \Gamma_1,$ we have that $A_2 = A_1$ equal the double real root of $\eta'_T$. Then, as $\gamma$ increases, $A_2$ increases and reaches $+\infty$ for $\gamma = \beta_{1}/\beta_{2}$. Then, for $\gamma > \beta_{1}/\beta_{2}$, $A_2$ comes in from $-\infty$ and becomes the new left endpoint $\widetilde{A}_2$.
\end{remark}
The above proof allows one to give an alternative expression for $\Gamma_{1}$. Indeed, taking the residues at $z_{1}$ and $z_{2}$ in (\ref{algebequat2}) when $\gamma=\Gamma_{1}$ gives the two equations
$$\frac12=\frac{c(z_{1}-x_{2})^{2}}{2\beta_{1}(z_{1}-z_{2})(z_{1}-\bar z_{2})},\qquad
-\frac{\Gamma_{1}}{2}=\frac{c(z_{2}-x_{2})^{2}}{2\beta_{2}(z_{2}-z_{1})(z_{2}-\bar z_{1})}.$$
Dividing the second equation by the first one, and taking modulus, leads to
\begin{equation}\label{G1bis}
\Gamma_{1}=\frac{\beta_{1}}{\beta_{2}}\frac{|z_{2}-x_{2}|^{2}}{|z_{1}-x_{2}|^{2}}.
\end{equation}

\begin{figure}[ht]
\centering
\includegraphics[scale=.5]{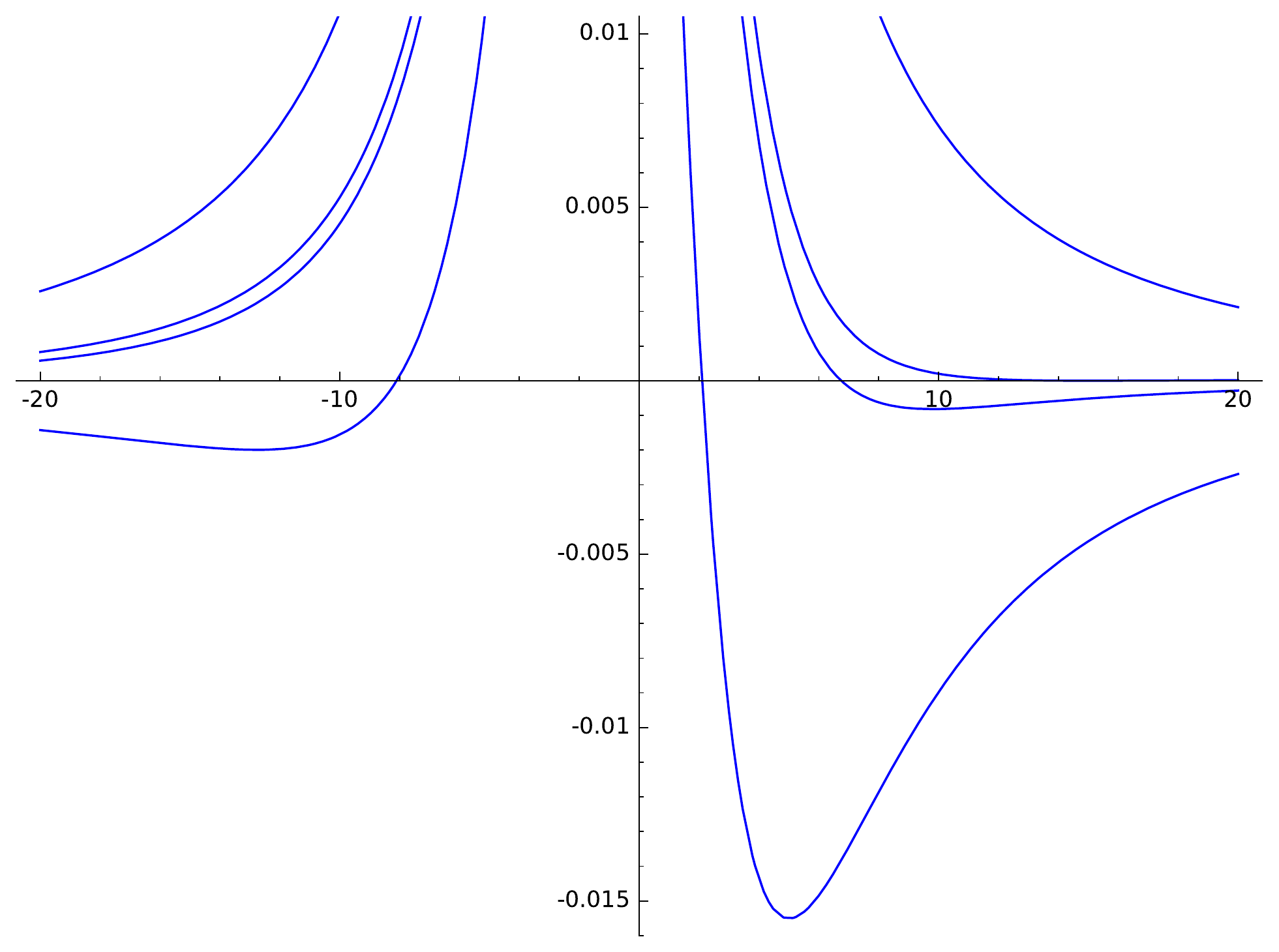}
\caption{Densities of $\eta_{T}(x)$ for $\gamma=0,0.43...,0.5,1$. Here $\beta_{1}=3$ and $\beta_{2}=6$. The central part where the density attains its maximum is not shown. As $\gamma$ increases, the total mass $T$ decreases and so does the sequence of densities. The value $\gamma=\Gamma_{1}=0.43...$ is the critical value where the density has a double root. For $x$ large in modulus, the density behaves respectively like $x^{-2}$, $-x^{-3}$, $-x^{-2}$ as $\gamma<0.5$, $\gamma=0.5$, $\gamma>0.5$ (up to positive multiplicative factors).}
\label{sign-equil}
\end{figure}

\subsection{The support of the equilibrium measure $\mu_{T}$}
The aim of this section is to describe the dynamics of the support $S_{T}$ of $\mu_{T}$ as $\gamma$ moves from 0 to 1.
As we have seen in Theorem \ref{thm:waT}, $\deg AB^{2}<6$ and each gap of $S_T$ must contain a root of $B$, thus only the following cases can occur:
\begin{itemize}
    \item [(i)] $\deg(A)=0$ (i.e. $S_T=\mathbb{R}$) and $\deg(B)\leq 2$,
    \item [(ii)] $\deg(A)=1$ (i.e. $S_T$ is a segment bounded by one side) and $\deg(B)\leq 2$,
    \item [(iii)] $\deg(A)=2$ (i.e. $S_T$ is a bounded interval or the complement of a bounded interval) and $\deg(B)\leq 1$,
    \item [(iv)] $\deg(A)=3$ (i.e. $S_T$ consists of two intervals, one bounded and the other one unbounded on one side) and $\deg(B)\leq 1$.
\end{itemize}
We begin with a lemma.
\begin{lemma}\label{lem-not-occur}
Case (iv) above cannot occur. In particular, $\deg AB\leq3$.
\end{lemma}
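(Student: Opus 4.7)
Suppose, for contradiction, that case (iv) occurs. Reflecting $x\mapsto-x$ if necessary (which amounts to swapping the roles of $\beta_1$ and $\beta_2$), we may restrict to the configuration $S_T=(-\infty,a_1]\cup[a_2,a_3]$ with $a_1<a_2<a_3$; the other orientation is handled by the symmetric argument. Since each gap of $S_T$ must contain exactly one zero of $B$, we have $\deg B=1$ with $B(z)=z-b$, $b\in(a_1,a_2)$, and the classification of the constant $c$ from the proof of Theorem~\ref{thm:waT}(iii) yields $c\in\R_{+}$ (the smallest root of $A_t$ tends to $-\infty$ in this configuration).

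The first step is a monotonicity argument on the complementary half-line $(a_3,+\infty)$. There $V^{\mu_T}+Q$ is differentiable, and \eqref{algebequat2} gives
\[
(V^{\mu_T}+Q)'(x)=C_{\mu_T}(x)+Q'(x)=c\,\frac{\sqrt{A(x)}\,B(x)}{D(x)},\qquad x>a_3.
\]
With the branch for which $\sqrt{A(x)}>0$ on $(a_3,+\infty)$, all four real factors $c$, $\sqrt{A(x)}$, $B(x)=x-b$, $D(x)$ are positive there (note $b<a_2<a_3<x$), so $V^{\mu_T}+Q$ is strictly increasing on $(a_3,+\infty)$. Continuity together with the Frostman identity on $S_T$ then gives $V^{\mu_T}(x)+Q(x)>V^{\mu_T}(a_3)+Q(a_3)=c_T$ for every $x>a_3$.

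The decisive and technically hardest step is the evaluation $\lim_{|x|\to\infty}(V^{\mu_T}(x)+Q(x))=0$. Writing $V^{\mu_T}(x)+Q(x)=[Q(x)-T\log|x|]-\int\log|1-s/x|\,d\mu_T(s)$, the first bracket tends to $0$ by the explicit expression \eqref{twochargest}. For $x\to+\infty$ the integral vanishes by dominated convergence with dominating function $\log2+\log(1+|s|)$, which is $\mu_T$-integrable by \eqref{log-inf}. For $x\to-\infty$ one must handle the potential singularity as $s\to x$ within the unbounded component of $S_T$; however, since \eqref{densityeqmeas2} with $\deg A=3$, $\deg B=1$, $\deg D=4$ yields $\mu_T'(s)\sim K|s|^{-3/2}$ as $s\to-\infty$, the density is of order $|x|^{-3/2}$ near $s=x$. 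Splitting the integral into the bulk $|s-x|\geq|x|/2$ (where $|\log|1-s/x||\leq\log2+\log(1+|s|)$ and dominated convergence applies) and the singular region $|s-x|<|x|/2$ (where the density decay compensates the logarithm, a direct computation giving a contribution of order $|x|^{-1/2}$), one shows both pieces tend to $0$. Combining with $V^{\mu_T}+Q\equiv c_T$ on $(-\infty,a_1]\subset S_T$ forces $c_T=0$. But then the monotonicity step yields $V^{\mu_T}(x)+Q(x)>0$ for all $x>a_3$, which is impossible for a strictly increasing function with limit $0$ at $+\infty$. Therefore case (iv) cannot occur, and since $\deg A+\deg B\leq 3$ in each of cases (i)--(iii), we conclude $\deg AB\leq 3$.
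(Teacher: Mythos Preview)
Your argument is correct and reaches the same contradiction as the paper, namely that $V^{\mu_T}+Q-c_T$ is strictly increasing on $(a_3,\infty)$ yet must vanish at $+\infty$. The difference lies in how the value at infinity is obtained. You compute $\lim_{x\to\pm\infty}(V^{\mu_T}(x)+Q(x))=0$ directly, which forces you to control the potential at points inside the unbounded component of the support; this is where you need the density decay $\mu_T'(s)\sim K|s|^{-3/2}$ and the splitting into bulk and singular regions. The paper avoids this analysis entirely: it works on the right-hand side of \eqref{algebequat2}, observing that the antiderivative $c\Re\int_a^z\frac{\sqrt{A}B}{D}$ (which equals $V^{\mu_T}+\Re Q-c_T$ by \eqref{algebequat3}) extends harmonically through $\infty$ simply because the integrand is $\mathcal{O}(x^{-3/2})$. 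Since the function vanishes identically on $(-\infty,a_1]$ by Frostman, its value at the single point $\infty$ of the Riemann sphere is $0$, and hence $\int_{a_3}^{+\infty}\frac{\sqrt{A}B}{D}\,dx=0$; positivity of the integrand gives the contradiction immediately. In effect the paper's harmonic-extension argument transfers the Frostman equality on the unbounded component to $+\infty$ in one stroke, whereas you establish the two limits at $\pm\infty$ separately by hand. Your approach is self-contained and makes the decay explicit; the paper's is shorter and sidesteps the estimate of $V^{\mu_T}(x)$ for $x$ in the support. One minor remark: the reflection $x\mapsto-x$ does not literally swap $\beta_1$ and $\beta_2$ (it also interchanges the positions of attractor and repellent), but since the argument uses only the algebraic structure of \eqref{algebequat2} and Frostman, the reduction is valid.
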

\begin{proof}
Equation (\ref{algebequat2}) implies that
\begin{equation}\label{algebequat3}
V^{\mu_T} (z) + \Re Q(z) -c_{T}= c\Re \int_{a}^z \frac{\sqrt{A(x)}B(x)}{D(x)}dx,
\end{equation}
where $a$ is any point in the support $S_{T}$. The integral on the right-hand side is a multivalued function of $z$, but its real part is well-defined in $\C\setminus\{z_{1},\bar z_{1},z_{2},\bar z_{2}\}$. Since its integrand behaves like $\OO(x^{-3/2})$ as $x$ tends to infinity, it is also well-defined at $\infty$ and vanishes there because of (\ref{algebequat3}) and \eqref{Frost1}--(\ref{Frost2}). In particular, one should have
\begin{equation*}
\int_{a_{3}}^\infty \frac{\sqrt{A(x)}B(x)}{D(x)}dx  =0,
\quad\text{if }  S_{T}=(-\infty,a_{1}]\cup[a_{2},a_{3}],
\end{equation*}
and
\begin{equation*}
\int_{a_{1}}^{-\infty} \frac{\sqrt{A(x)}B(x)}{D(x)}dx=0,
\quad\text{if }  S_{T}=[a_{1},a_{2}]\cup[a_{3},\infty),
\end{equation*}
which contradicts the fact that the (unique) root of $B$ lies between the two intervals of $S_{T}$.
\end{proof}
Since our main result concerns the dynamics with respect to the parameter $\gamma$, we compute in the next lemma the derivatives of the roots of the polynomials $A$ and $B$ in (\ref{algebequat2}) with respect to $\gamma$.
It will allow us to describe the evolution of the endpoints of the support,
as the strength $\gamma$ of the repellent varies. This follows the dynamical approach taken by Buyarov and Rakhmanov, in their seminal paper \cite{BR:99}, with respect to the mass $t$ of the equilibrium measure.
In previous papers \cite{MOR2015}, \cite{OrSL2015} and \cite{OrSL2016}, an analysis of the dynamics, with respect to the mass $t$ and other parameters of the external field $Q$, was performed, but just for admissible external fields.

In the sequel, we simply denote by $\dot f$ the derivative of a function $f$ with respect to~$\gamma$.
\begin{lemma}\label{dynamics}
Assume that 
the support $S_{T}$ has two finite endpoints $a_{1}$, $a_{2}$, and $B$ is of degree 1, so that
$$A(z)=(z-a_{1})(z-a_{2}),\qquad B(z)=(z-b),$$
with a root $b$ of $B$ outside of $S_{T}$. We denote by $h$ and $k$ the intersections of the internal and external bisectors of $\hat{a_{1}z_{2}a_{2}}$ with the real line such that the circle with diameter $(h,k)$ goes through $z_{2}$.

Then, the following holds,
\\
i) if $S_{T}=(-\infty,a_{1}]\cup[a_{2},\infty)$,
the derivatives of the roots $a_{1},a_{2}$ of $A$ and $b$ of $B$ with respect to $\gamma$ satisfy
\begin{align}
\dot a_{j}  & =\frac{2i\Im((a_{j}-\bar z_{2})\sqrt{A(z_{2})})|a_{j}-z_{1}|^{2}}{cA'(a_{j})(a_{j}-b)}, \quad j=1,2,\label{der-a}\\[10pt]
\dot b & =\frac{i\Im((b-\bar z_{2})\sqrt{A(z_{2})})|b-z_{1}|^{2}}{cA(b)}, \label{der-b}
\end{align}
where $A'(z)$ denote, as usual, the derivative of $A(z)$ with respect to $z$. The constant $c$ lies in $i\R_{+}$ and depends on $\gamma$.
Let $c=id$, $d>0$. Then, (\ref{der-a})--(\ref{der-b}) can be rewritten as
\begin{align}
\dot a_{j}  & =\frac{2\Im(\sqrt{A(z_{2})})(a_{j}-h)|a_{j}-z_{1}|^{2}}{dA'(a_{j})(a_{j}-b)}, \quad j=1,2,\label{der-a-bis}\\[10pt]
\dot b & =\frac{\Im(\sqrt{A(z_{2})})(b-h)|b-z_{1}|^{2}}{dA(b)}. \label{der-b-bis}
\end{align}
ii) if $S_{T}=[a_{1},a_{2}]$ and $b>a_{2}$,
the derivatives of the roots $a_{1},a_{2}$ of $A$ and $b$ of $B$ with respect to $\gamma$ satisfy
\begin{align}
\dot a_{j}  & =\frac{2\Re((a_{j}-\bar z_{2})\sqrt{A(z_{2})})|a_{j}-z_{1}|^{2}}{cA'(a_{j})(a_{j}-b)}, \quad j=1,2,\label{der-a1}\\[10pt]
\dot b & =\frac{\Re((b-\bar z_{2})\sqrt{A(z_{2})})|b-z_{1}|^{2}}{cA(b)}, \label{der-b1}
\end{align}
The constant $c$ is negative and depends on $\gamma$.
Let $c=-d$, $d>0$. 
Then, (\ref{der-a1})--(\ref{der-b1}) can be rewritten as
\begin{align}
\dot a_{j}  & =-\frac{2\Re(\sqrt{A(z_{2})})(a_{j}-k)|a_{j}-z_{1}|^{2}}{dA'(a_{j})(a_{j}-b)}, \quad j=1,2,\label{der-a1-bis}\\[10pt]
\dot b & =-\frac{\Re(\sqrt{A(z_{2})})(b-k)|b-z_{1}|^{2}}{dA(b)}. \label{der-b1-bis}
\end{align}
\end{lemma}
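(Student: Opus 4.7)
The approach is to extract $\dot a_1,\dot a_2,\dot b$ by differentiating the two residue identities imposed by \eqref{algebequat2} at $z_1$ and $z_2$. Since $Q'(z)$ has simple poles of residue $1/2$ at $z_1$ and $-\gamma/2$ at $z_2$ while $C_{\mu_T}$ is analytic there, the residues of \eqref{algebequat2} give
\[
\tfrac{1}{2} = \frac{c\sqrt{A(z_1)}(z_1-b)}{D'(z_1)},\qquad -\tfrac{\gamma}{2} = \frac{c\sqrt{A(z_2)}(z_2-b)}{D'(z_2)}.
\]
Using $\dot A(z_j)/A(z_j)=-\sum_k\dot a_k/(z_j-a_k)$, the logarithmic $\gamma$-derivatives of these identities translate into $\psi(z_1)=0$ and $\psi(z_2)=1/\gamma$ for the rational function
\[
\psi(z) := \frac{\dot c}{c} - \frac{\dot b}{z-b} - \frac{1}{2}\sum_{k=1,2}\frac{\dot a_k}{z-a_k}.
\]

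Because $\bar c=\pm c$ in the two cases considered, $\dot c/c$ is real, so $\psi$ is real-valued on $\R$; hence also $\psi(\bar z_1)=0$ and $\psi(\bar z_2)=1/\gamma$. Writing $\psi(z)=P(z)/[(z-a_1)(z-a_2)(z-b)]$ with $P$ a cubic polynomial, these four conditions uniquely determine $P$. The vanishing at $z_1,\bar z_1$ forces $P(z)=(z-z_1)(z-\bar z_1)L(z)$ with $L$ affine, while the residue identity at $z_2$ rewrites $P(z_2)=A(z_2)(z_2-b)/\gamma$ as $P(z_2)=-D'(z_2)\sqrt{A(z_2)}/(2c)$. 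The factor $D'(z_2)$ cancels in the Lagrange interpolation of $L$, and after using $P(\bar z_2)=\overline{P(z_2)}$ (together with $\bar c=\pm c$), one obtains
\[
P(z) = \frac{(z-z_1)(z-\bar z_1)}{2c}\Bigl[\varepsilon\sqrt{A(\bar z_2)}(z-z_2) - \sqrt{A(z_2)}(z-\bar z_2)\Bigr],
\]
with $\varepsilon=+1$ in case (i) (where $\bar c=-c$) and $\varepsilon=-1$ in case (ii) (where $\bar c=c$).

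Specialising at $z=a_j$ and $z=b$ (both real) and using the symmetric branch $\overline{\sqrt{A(z_2)}}=\sqrt{A(\bar z_2)}$, the bracket becomes $-2i\Im[(a_j-\bar z_2)\sqrt{A(z_2)}]$ in case (i) and $-2\Re[(a_j-\bar z_2)\sqrt{A(z_2)}]$ in case (ii), while $(a_j-z_1)(a_j-\bar z_1)=|a_j-z_1|^2$. Comparison with the partial-fraction residues $P(a_j)=-\tfrac{\dot a_j}{2}A'(a_j)(a_j-b)$ and $P(b)=-\dot b\,A(b)$ then yields the formulas \eqref{der-a}--\eqref{der-b} in case (i) and \eqref{der-a1}--\eqref{der-b1} in case (ii). The alternative forms follow by substituting $c=id$ or $c=-d$ and invoking, for every real $a$, the identities $\Im[(a-\bar z_2)\sqrt{A(z_2)}]=(a-h)\Im\sqrt{A(z_2)}$ and $\Re[(a-\bar z_2)\sqrt{A(z_2)}]=(a-k)\Re\sqrt{A(z_2)}$: writing $\sqrt{A(z_2)}=u+iv$ and $\bar z_2=1-i\beta_2$, the former reduces to $h=1-\beta_2 u/v$, equivalent to $\Im[(z_2-h)\overline{\sqrt{A(z_2)}}]=0$, that is $\arg(z_2-h)\equiv\tfrac12[\arg(z_2-a_1)+\arg(z_2-a_2)]\pmod\pi$, which is the interior angle bisector direction at $z_2$ of $\widehat{a_1 z_2 a_2}$; the analogue with $\Re$ characterises $k$ via the exterior bisector, and since the two bisectors are perpendicular, $z_2$ lies on the circle of diameter $[h,k]$.

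The main technical care is to keep simultaneous track of the branch of $\sqrt{A(z_2)}$ and of the sign prescription for $c$, since these jointly fix the $\pm$ in the bracket defining $P$, and thus determine whether $\Im$ or $\Re$ of $(a_j-\bar z_2)\sqrt{A(z_2)}$ appears in the final answer.
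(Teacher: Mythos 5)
Your approach is the same as the paper's in all essentials: you differentiate the residue data of \eqref{algebequat2} at the fixed poles $z_1,z_2$ with respect to $\gamma$, use conjugate symmetry to double the conditions, solve for the resulting affine numerator, and evaluate at the roots of $A$ and $B$. The paper differentiates the full identity and observes that $(z-z_1)(z-\bar z_1)$ must divide the numerator of the result; that is just the algebraic shadow of your vanishing conditions $\psi(z_1)=\psi(\bar z_1)=0$, and the two routes produce the same linear polynomial $mz+n=c\,L(z)$. Your computation of $\dot a_j$ and $\dot b$ by matching residues of $\psi$ is correct, and your geometric reading of $h$ and $k$ as the feet of the internal and external bisectors (including the Thales argument for why $z_2$ lies on the circle with diameter $[h,k]$) usefully spells out what the paper leaves terse.

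Two points need care, however. First, the lemma also asserts that $c\in i\R_{+}$ in case (i) and $c<0$ in case (ii); the paper deduces this from the Plemelj formula applied to \eqref{algebequat2} together with positivity of the density of $\mu_T$ on $S_T$. Your proof invokes $\bar c=\pm c$ as given, so this part of the statement is left unproved. Second, the branch of $\sqrt{A}$ that is analytic in $\C\setminus S_T$ (the one fixed by \eqref{algebequat2}) satisfies $\sqrt{A(\bar z_2)}=-\overline{\sqrt{A(z_2)}}$ in case (i), not the ``symmetric'' relation you quote; indeed, conjugating the residue identity at $z_2$ yields the consistency constraint $\bar c\,\overline{\sqrt{A(z_2)}}=c\,\sqrt{A(\bar z_2)}$, which is exactly this sign relation once $\bar c=-c$ is known. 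You instead postulate a universal symmetric branch and compensate with an $\varepsilon$; the two sign choices cancel, so the final brackets are right, but the argument would be cleaner (and $\varepsilon$ would disappear) if you kept the branch convention dictated by \eqref{algebequat2} and let the sign of $c$ force the conjugation rule, as the paper does.
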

\begin{proof}
We first consider case i). Differentiating the right-hand side of (\ref{algebequat2}) with respect to $\gamma$, we get
$$\frac{\dot c A(z)B(z)+(c/2)\dot A(z)B(z)+cA(z)\dot B(z)}{\sqrt{A(z)}D(z)}.$$
Since the derivative of the left-hand side of (\ref{algebequat2}) has no pole at $z_{1}$ and $\bar z_{1}$, the factors $(z-z_{1})(z-\bar z_{1})$ should cancel in the above ratio. Moreover, by Lemma \ref{lem-not-occur}, the degree of the numerator is at most 3. Thus, the previous ratio can be rewritten as
$$\frac{mz+n}{\sqrt{A(z)}(z-z_{2})(z-\bar z_{2})},$$
where $m$ and $n$ can be computed from the residues of the left-hand side of (\ref{algebequat2}) at $z_{2}$ and $\bar z_{2}$. Namely, we have
\begin{equation}\label{eq-mn}
\frac{mz_{2}+n}{\sqrt{A(z_{2})}2i\beta_{2}}=-\frac{1}{2}, \qquad
\frac{m\bar z_{2}+n}{\sqrt{A(\bar z_{2})}2i\beta_{2}}=\frac{1}{2}.
\end{equation}
Together with the identity $\sqrt{A(\bar z_{2})}=-\bar{\sqrt{A(z_{2})}}$ which follows from the choice of branch cuts, this leads to
$$m=-i\Im\sqrt{A(z_{2})},\qquad n=i\Im(\bar z_{2}\sqrt{A(z_{2})}).$$
Choosing $z$ as a root of $A(z)$ or $B(z)$ in the identity
$$\dot c A(z)B(z)+(c/2)\dot A(z)B(z)+cA(z)\dot B(z)=
(mz+n)(z-z_{1})(z-\bar z_{1}),$$
gives (\ref{der-a}) and (\ref{der-b}). The fact that $c\in i\R_{+}$ follows from (\ref{algebequat2}) and Plemelj formula, from which we know that $\Im(c(\sqrt{A(x)})_{+}B(x))\geq0$ for $x\in S_{T}$, together with the facts that
$(\sqrt{A(x)})_{+}<0$ and $B(x)<0$ on $(-\infty,a_{1}]$ (or $(\sqrt{A(x)})_{+}>0$ and $B(x)>0$ on $[a_{2},\infty)$).
Finally, $mz+n=m(z-h)$ since, for $z$ real,
$\Im((\bar z_{2}-z)\sqrt{A(z_{2})})$ vanishes precisely when $z=h$. This implies
 (\ref{der-a-bis}) and (\ref{der-b-bis}).

 Let us now consider case ii). The beginning of the proof is identical to case i) up to equations (\ref{eq-mn}). Now we have $\sqrt{A(\bar z_{2})}=\bar{\sqrt{A(z_{2})}}$, and this leads to
$$m=-\Re\sqrt{A(z_{2})},\qquad n=\Re(\bar z_{2}\sqrt{A(z_{2})}).$$
The fact that $c<0$ follows from (\ref{algebequat2}) and Plemelj formula, from which we know that $\Im(c(\sqrt{A(x)})_{+}B(x))\geq0$ for $x\in S_{T}$, together with the facts that
$(\sqrt{A(x)})_{+}\in i\R_{+}$ and $B(x)<0$ on $[a_{1},a_{2}]$. Finally,  (\ref{der-a1-bis}) and (\ref{der-b1-bis}) follows from the fact that $mz+n=m(z-k)$ since it is easy to verify that $\Re((\bar z_{2}-z)\sqrt{A(z_{2})})$ vanishes when $z=k$.
\end{proof}

Equipped with the previous lemma, we are in a position to state our main result in this section. It describes the dynamics of the equilibrium measure $\mu_{T}$ in the weakly admissible external field $Q$ as the parameter $\gamma$ grows from 0 to 1.
In different situations, such as those studied in the papers \cite{MOR2015,OrSL2015,OrSL2016}, the measure $\mu_{T}$, limit of the equilibrium measures $\mu_{t}$ when $t\rightarrow T$, always had its support equal to the whole real axis.
 As the following theorem shows,
the behavior of $\mu_{T}$ in the present situation can be different.

Let us first define the constant $\Gamma_{2}$ by
\begin{equation}\label{Gamma2}
\Gamma_{2}:=\frac{\beta_{1}}{\beta_{2}}\frac{|z_{2}-x_{2}|}{|z_{1}-x_{2}|}<1,
\end{equation}
and recall that $\Gamma_{1}$ was defined in (\ref{Gamma1}), see also (\ref{G1bis}). Note that $\Gamma_{1}<\Gamma_{2}$.
\begin{theorem}\label{thm:main}
As $\gamma$ grows from $0$ to $1$, the following phase diagram takes place:
\begin{itemize}
\item \textbf{Phase 1}: For $\gamma\in[0,\Gamma_1)$, we have $S_T=\mathbb{R}$, and $\mu_{T}$ equals $\eta_{T}$, given by (\ref{signedwadens1}), or alternatively,
\begin{equation}\label{dens-1}
\mu_{T}(x)=\frac{d}{\pi}\frac{ |x-b|^{2}}{D(x)}dx,
\end{equation}
with some $d>0$. As $\gamma$ moves from 0 to $\Gamma_{1}$, the point $b$ describes the arc from $z_{2}$ to $x_{2}$ on the circle $\CC$.
\item \textbf{1st transition}: For $\gamma=\Gamma_1$, $b$ in (\ref{dens-1}) equals $x_{2}$ and the density of $\mu_{T}$ has a double zero at this point. The point $b$ will stay at $x_{2}$ in the subsequent phases.
\item \textbf{Phase 2}: For $\gamma\in(\Gamma_1,\Gamma_2)$, the support $S_T$ is the union of two semi-infinite intervals $(-\infty,a_1]\cup[a_2,+\infty)$. The two endpoints $a_{1}$ and $a_{2}$ coincide with $x_{2}$ when $\gamma=\Gamma_{1}$. As $\gamma$ grows from $\Gamma_{1}$ to $\Gamma_{2}$, $a_{1}$ moves to the left and $a_{2}$ to the right of $x_{2}$ while the bisector of $\hat{a_1z_2a_2}$ constantly passes through $x_2$.
The equilibrium measure is given on $S_{T}$ by
$$\mu_{T}(x)=\frac{d}{\pi} \frac{\sqrt{(x-a_1)(a_2-x)}|x-x_2|}{D(x)}dx,$$
where $d>0$.
\item \textbf{2nd transition}: At $\gamma=\Gamma_2$, $a_{1}$ reaches the center $x_{0}$ of the circle $\CC$ while $a_{2}$ reaches $+\infty$. The support $S_T$ equals $(-\infty,x_0]$, and the equilibrium measure is given by
$$\mu_{T}(x)=\frac{d}{\pi}\frac{\sqrt{(x_0-x)}(x_2-x)}{D(x)}dx,$$
where $d>0$.
\item \textbf{Phase 3}: For $\gamma\in(\Gamma_2,1)$, the support $S_T$ is a finite segment $[a_1,a_2]$, where $a_2$ is the continuation of the previous $a_1$ and $a_1$ is the continuation (through $\infty$) of the previous $a_2$. The equilibrium measure is given by
$$\mu_{T}(x)=\frac{d}{\pi} \frac{\sqrt{(x-a_1)(a_2-x)}(x_2-x)}{D(x)}dx,$$
where $d>0$.
As $\gamma$ grows from $\Gamma_{2}$, the point $a_1$ increases from $-\infty$, while $a_2$ decreases from $x_{0}$. They collide at $x_{1}$ when $\gamma= 1$. Furthermore, the bisector of $\hat{a_1z_2a_2}$ constantly passes through $x_1$. 
\end{itemize}
\end{theorem}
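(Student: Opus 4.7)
The plan is to analyze each $\gamma$-regime in turn, using the algebraic equation \eqref{algebequat2} and the containment $S_T\subset\supp\eta_T^+$ from Proposition 4.11 as the main structural tools, and invoking the dynamical identities of Lemma \ref{dynamics} to track the endpoints as $\gamma$ varies.

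For \textbf{Phase 1}, Proposition \ref{thm:phasessigned} gives $\mu_T=\eta_T$ and $S_T=\R$ directly, hence $A\equiv 1$ in Theorem \ref{thm:waT}(iv). The bound $\deg AB^2<6$ forces $\deg B\leq 2$, and strict positivity of $\eta_T'$ on $\R$ for $\gamma<\Gamma_1$ (Lemma \ref{lem:threshold}) rules out $\deg B\in\{0,1\}$, leaving $B(z)=(z-b)(z-\bar b)$ with a conjugate pair. Matching with \eqref{signedwadens1} identifies $b,\bar b$ as the conjugate roots of \eqref{conjroots}; a short direct computation shows $|b-x_0|$ equals the radius of $\CC$ appearing in \eqref{x0x1x2}, independently of $\gamma$, so $b\in\CC$. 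At $\gamma=0$ the measure $\eta_T$ does not depend on $z_2$, so $B$ must cancel $(z-z_2)(z-\bar z_2)$ in $D(z)$, giving $b=z_2$. Monotonicity of $\Re b$ in $\gamma$ (immediate from \eqref{conjroots}) and the double-root criterion at $\gamma=\Gamma_1$ force $b$ to traverse the arc of $\CC$ from $z_2$ to $x_2$.

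At the \textbf{first transition} the double root of $\eta_T'$ at $x_2$ is inherited by $\mu_T'=\eta_T'$. Entering \textbf{Phase 2}, Proposition \ref{thm:phasessigned} plus Proposition 4.11 yield $S_T\subset(-\infty,A_1]\cup[A_2,\infty)$. Lemma \ref{lem-not-occur} rules out case (iv), and the presence of a genuine gap together with $\deg AB^2<6$ gives $\deg A=2$, $\deg B\leq 1$. The persistence $b=x_2$ is argued as follows: at $\gamma=\Gamma_1^+$ the endpoints $a_1,a_2$ emerge from $x_2$, so by continuity $b=x_2$ at emergence; formula \eqref{der-b-bis} of Lemma \ref{dynamics}(i) shows $\dot b=0$ exactly when $b=h$, and the geometric content of $b=h=x_2$ is that the internal bisector of $\hat{a_1z_2a_2}$ meets $\R$ at $x_2$. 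Persistence is then obtained from uniqueness of the solution of the coupled ODE for $(a_1,a_2,b)(\gamma)$, corroborated by a direct verification of the Frostman conditions \eqref{Frost1}--\eqref{Frost2} for the candidate measure via \eqref{represpot}; Proposition \ref{char-Frost} then identifies it with $\mu_T$.

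For the \textbf{second transition and Phase 3}, the value $\Gamma_2$ is characterized by $a_2\to+\infty$: expanding both sides of \eqref{algebequat2} to leading order at infinity and matching constants recovers the ratio \eqref{Gamma2}. For $\gamma\in(\Gamma_2,1)$ we enter case (iii) with bounded support (consistent with Theorem \ref{thm:compsupp}), $\deg A=2$ with $A(z)=(z-a_1)(a_2-z)$, and $\deg B=1$. Now Lemma \ref{dynamics}(ii) applies: the stationary condition becomes $b=k$, i.e.\ the external bisector from $z_2$ meets $\R$ at $b$; since $x_1$ and $x_2$ are antipodal on $\CC$ and $\CC$ passes through $z_2$, $b=x_2=k$ is equivalent to the internal bisector passing through $x_1$. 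At $\gamma=1$ we have $T=0$, so $\mu_T$ disappears and $a_1,a_2$ must collide; plugging $T=0$ into \eqref{algebequat2} and matching residues at $z_1,\bar z_1$ identifies the collision point as $x_1$.

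The main obstacle, as in the analogous analyses in \cite{MOR2015,OrSL2015,OrSL2016}, is to certify rigorously that the branch with $b\equiv x_2$ (equivalently, the bisector pinned at $x_2$ in Phase 2 and at $x_1$ in Phase 3) is the \emph{unique} continuous continuation of the coupled system across each phase, starting from the degenerate data at $\Gamma_1$ and matching the singular behavior at $\Gamma_2$. Once this persistence is granted, the explicit densities, the formula \eqref{Gamma2}, and the terminal collision at $x_1$ follow by standard residue and leading-order matching in \eqref{algebequat2}.
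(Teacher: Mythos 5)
Your outline follows the paper quite closely: Phase 1 and the first transition via Proposition \ref{thm:phasessigned}, the degree counting $\deg AB^2<6$ together with Lemma \ref{lem-not-occur} to fix the shape of $A$ and $B$ in each regime, Lemma \ref{dynamics} for the signs of $\dot a_1,\dot a_2,\dot b$, and the characterization of $\Gamma_2$ by the escape of an endpoint to infinity. That much is sound and matches the paper's strategy. Phase 3 worked backwards from $\gamma=1$ (where $Q(x)=\log|x-z_1|-\log|x-z_2|$ has its minimum at $x_1$) is also the paper's route.

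However, there is a genuine gap precisely at the step you flag as ``the main obstacle'' and leave unresolved: the persistence of $b=x_2$ (equivalently $h=x_2$ in Phase 2, $h=x_1$ in Phase 3). The ``uniqueness of the coupled ODE'' idea does not close this. The stationarity $\dot b=0$ holds when $b=h$, but $h$ is the foot of the bisector of $\hat{a_1z_2a_2}$ and so depends on the moving endpoints $a_1(\gamma),a_2(\gamma)$: it is not obvious from the ODE alone that $h$ itself stays at $x_2$, and hence not obvious that $b\equiv x_2$ is an invariant solution of the coupled system. The paper supplies exactly the missing ingredient: taking residues of \eqref{algebequat2} at $z_1,z_2$ yields the algebraic constraints \eqref{residues}, and the second of these forces
\[
\tfrac12\arg\bigl((z_2-a_1)(z_2-a_2)\bigr)+\arg(z_2-b)=\arg(z_2-h)+\arg(z_2-b)
\]
to be constant in $\gamma$. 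Combined with the sign information $\dot b\,(b-h)<0$ from \eqref{der-b-bis}, a short contradiction argument pins both $h$ and $b$ at $x_2$ simultaneously. Without this residue constraint (or an explicit Frostman verification, which you mention but do not carry out), the persistence claim is unsupported, and with it the identification of $\Gamma_2$ via \eqref{Gamma2} and the geometric description of the bisector in Phases 2 and 3 are also not fully established.

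A secondary, smaller point: your derivation of $\Gamma_2$ by ``leading-order matching at infinity'' is plausible but not worked out; the paper's route, dividing the two residue equations in \eqref{residues} and taking moduli, gives $\Gamma_2=\frac{\beta_1}{\beta_2}\frac{|z_2-x_2|}{|z_1-x_2|}$ directly once $a_1=x_0$ and $a_2=\infty$ are in hand, and it is the same residue identity that does double duty for the persistence argument, so deriving \eqref{residues} is really the crux of the whole proof.
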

\begin{figure}[ht]
\begin{center}
\def\svgwidth{10cm}
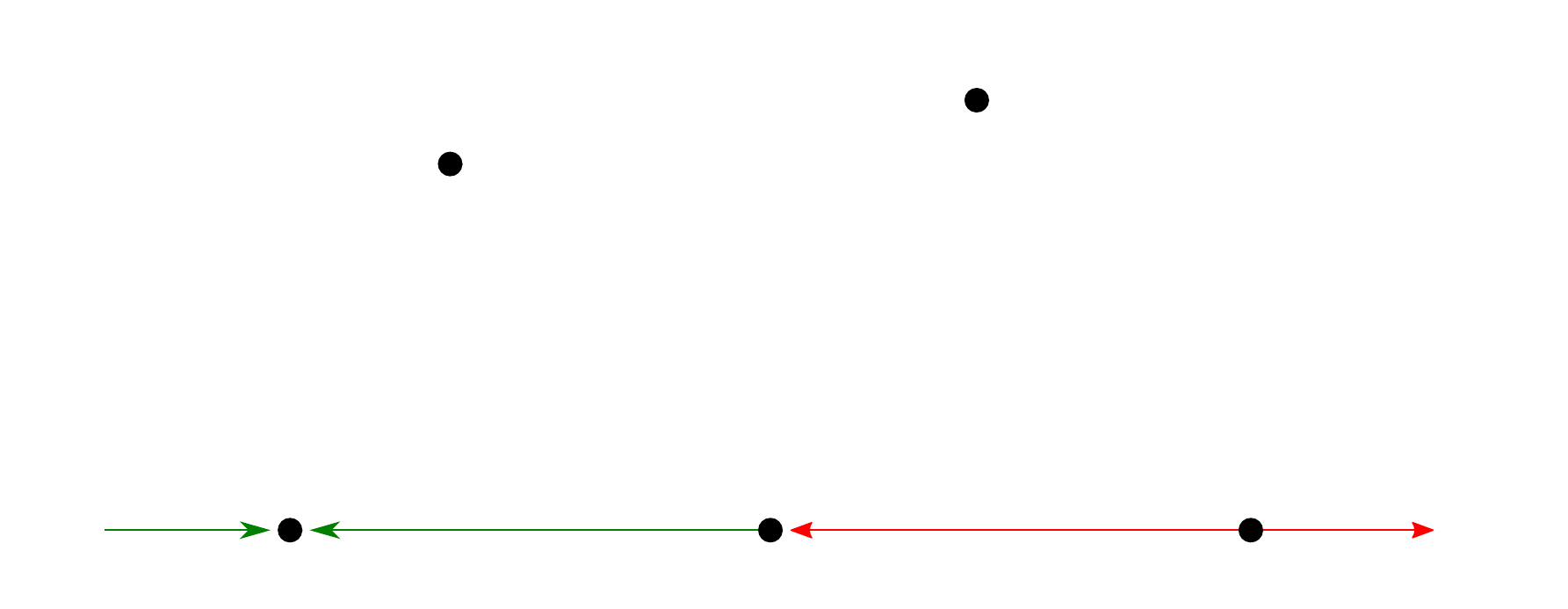
\caption{Dynamics of the finite endpoints $a_{1}<a_{2}$ of the support $S_{T}$. Phase 1 (blue): $S_{T}$ is the whole real line, the root $b$ goes along the arc of circle from $z_{2}$ to $x_{2}$ and then stay at this point for the two subsequent phases. Phase 2 (red): $S_{T}$ has a gap which appears as a point at $x_{2}$, increasing to the semi-infinite interval $(x_{0},\infty)$. Phase 3 (green): The previous gap continues to increase, or equivalently $S_{T}$ diminishes from $(-\infty,x_{0})$ to a single point at $x_{1}$.}
\end{center}
\end{figure}
\begin{remark}\label{rem:comparison}
Note that the number of phases is constant, namely three, for any  choice of $\beta_{1},\beta_{2}>0$.
Observe also that the threshold value $\Gamma_2$ is smaller than $\min(1,\beta_1/\beta_2)$.
Hence, while there does not always exist a phase where $S_{T}^{+}$
is bounded, such a phase always exists for $S_{T}$.
In case such a phase exists for the signed equilibrium measure, it starts after the one for the equilibrium measure.
Also the numbers of components of $S_T$
and $S^+_T$ do not always agree; indeed, as shown in Proposition \ref{thm:phasessigned} and Theorem \ref{thm:main}, for $\gamma \in (\Gamma_2, \min \{1, \beta_1/\beta_2\}),$ $S^+_T$ has two unbounded connected components while $S_T$ has just a bounded one.
\end{remark}
\begin{remark}\label{apoll}
During Phase 2, the fact that $x_{2}$ lies on the bisector of $\hat{a_{1}z_{2}a_{2}}$ implies that the circle $\CC$ is an Apollonius circle for $a_{1}$ and $a_{2}$, that is, there exists a $m>0$ such that, for any point $P\in\CC$, $d(P,a_{2})=md(P,a_{1})$. It is then easy to derive that the geometric mean of the distances $|a_{1}-x_{0}|$ and $|a_{2}-x_{0}|$ is the radius of circle. The same holds true during Phase 3.
\end{remark}
\begin{proof}[Proof of Theorem \ref{thm:main}]
When $\gamma\in[0,\Gamma_{1})$, we know that $\eta_{T}$ is a positive measure and
$\mu_T = \eta_T$, so the assertions about the first phase and the first transition have already been shown in the proof of Proposition \ref{thm:phasessigned}.

When $\gamma$ becomes larger than $\Gamma_1$, a negative part in the measure $\eta_{T}$ appears around $x_{2}$, see Lemma \ref{lem:threshold}.
Since the support $S_{T}$ of $\mu_{T}$ is a subset of the support of $\eta_{T}^{+}$,
we deduce that $S_{T}$ splits into two semi-infinite intervals $(-\infty,a_{1}]\cup[a_{2},\infty)$.
Namely, the two conjugate roots of $B$ collide at $x_{2}\in\R$ when $\gamma=\Gamma_1$ and then they split into two roots, $a_{1}$ and $a_{2}$ of $A$, and a root $b$ of $B$.

For $\gamma>\Gamma_{1}$, we see from assertion i) in Lemma \ref{dynamics}, and equations (\ref{der-a-bis}) and (\ref{der-b-bis}), that
$$\dot a_{1}<0,\quad\dot a_{2}>0,\quad\dot b(b-h)<0.$$
Hence, as $\gamma$ increases, $a_{1}$ is moving to the left, $a_{2}$ to the right, and $h$ is an attractor point for $b$. On the other hand, taking the residues at $z_{1}$ and $z_{2}$ in (\ref{algebequat2}) gives the two equations
$$\frac12=\frac{c\sqrt{A(z_{1})}B(z_{1})}{2\beta_{1}(z_{1}-z_{2})(z_{1}-\bar z_{2})},\qquad
-\frac{\gamma}{2}=\frac{c\sqrt{A(z_{2})}B(z_{2})}{2\beta_{2}(z_{2}-z_{1})(z_{2}-\bar z_{1})},$$
or equivalently
\begin{equation}\label{residues}
\beta_{1}(z_{1}-z_{2})(z_{1}-\bar z_{2})=c\sqrt{A(z_{1})}B(z_{1}),\quad
-\beta_{2}\gamma(z_{2}-z_{1})(z_{2}-\bar z_{1})=c\sqrt{A(z_{2})}B(z_{2}).
\end{equation}
Note that taking the residues at $\bar z_{1}$ and $\bar z_{2}$ in (\ref{algebequat2}) leads to the same equations as above where we apply conjugation on both sides.
The left-hand side in the second equation in (\ref{residues}) has a constant argument. Hence, also 
$$\frac12\arg \left((z_{2}-a_{1})(z_{2}-a_{2})\right)+\arg(z_{2}-b)
=\arg(z_{2}-h)+\arg(z_{2}-b)$$
remains constant as $\gamma$ increases. Assume $h$ moves to the right of $x_{2}$. Then, $b$ moves to the right of $x_{2}$, which contradicts the fact that the previous sum of arguments remains constant. The same argument holds if we assume that $h$ moves to the left of $x_{2}$.
It follows that both points remain fixed, at the location $x_{2}$ where they coincide  when $\gamma=\Gamma_{1}$.

The next question is to determine if one of the endpoints $a_{1}$ and $a_{2}$ reaches infinity before $\gamma=1$. Since $z_{2}$ is on the left of $x_{2}$ and $x_{2}$ lies on the bisector of the angle $\hat{a_{1}z_{2}a_{2}}$, the first endpoint to possibly reach infinity can only be $a_{2}$.
Assume there is a $\gamma=:\Gamma_{2}\in(0,1)$ for which $a_{2}=+\infty$. Then
$a_{1}=x_{0}$ since $\hat{x_{0}z_{2}x_{2}}=\hat{x_{2}z_{2}\infty}$. Moreover, we deduce from dividing the second equation by the first one in (\ref{residues}) and taking modulus that
$$\Gamma_{2}=\frac{\beta_{1}}{\beta_{2}}\sqrt{\frac{|z_{2}-a_{1}|}{|z_{1}-a_{1}|}}\frac{|z_{2}-x_{2}|}{|z_{1}-x_{2}|}
=\frac{\beta_{1}}{\beta_{2}}\frac{|z_{2}-x_{2}|}{|z_{1}-x_{2}|}
=\frac{\sin(\theta_{1})}{\sin(\theta_{2})}<1,$$
where $\theta_{i}=\arg(z_{i}-x_{2})$, $i=1,2$. Since we have found that $\Gamma_{2}<1$ we may thus conclude that indeed the endpoint $a_{2}$ reaches $+\infty$ before $\gamma=1$.

For the study of phase 3, we find it easier to let $\gamma$ decrease from the value 1. For the value $\gamma=1$, the mass of $\mu_{T}$ vanishes and for $\gamma$ a little less than 1, $S_{T}$ is a small segment $[a_{1},a_{2}]$ around the minimum of $Q(x)=\log|x-z_{1}|-\log|x-z_{2}|$. This follows from the Frostman inequalities
(\ref{Frost1})--(\ref{Frost2}) that are satisfied in the weakly admissible case as well, see Section \ref{unbdd}. When $\gamma=1$,
$$Q(x)=\log\frac{|x-z_{1}|}{|x-z_{2}|},$$
and it is easy to verify that $Q(x)$ attains its minimum at $x_{1}$ and its maximum at $x_{2}$. Hence, $a_{1}=a_{2}=h=x_{1}$ and thus $k=x_{2}$ (recall that the point $k$ was defined in Lemma \ref{dynamics}). Also
$$Q'(x)=-\frac{4(x-x_{1})(x-x_{2})}{D(x)},$$
and in view of (\ref{algebequat2}), we may deduce, still for $\gamma=1$, that the constant $c$ in (\ref{algebequat2}) is negative and the polynomial $B$ is of degree 1, with a root $b$ equal to $x_{2}$. Next, for $\gamma$ less than 1, applying assertion ii) in Lemma \ref{dynamics} gives that
$$\dot a_{1}>0,\quad\dot a_{2}<0,\quad\dot b(b-k)<0,$$
so that, as $\gamma$ decreases from 1, $a_{1}$ goes to the left of $x_{1}$, $a_{2}$ goes to the right of $x_{1}$, and $b$ is attracted by $k$. Let us check that $b$ and $k$ remain at $x_{2}$ during that phase. This is equivalent to showing that $h$ remains at $x_{1}$. The second equation in (\ref{residues}), which is also valid in that phase, shows again that
$
\arg(z_{2}-h)+\arg(z_{2}-b)$
remains constant as $\gamma$ decreases. Assume $h$ moves to the right of $x_{1}$. Then, $k$ moves to the right of $x_{2}$ and so does $b$, which contradicts the fact that the previous sum of arguments remains constant. The same argument holds if we assume that $h$ moves to the left of $x_{1}$.

As $\gamma$ decreases, $a_{2}$ remains to the left of $b=x_{2}$, and thus remains bounded. We next study if the left endpoint $a_{1}$ can reach $-\infty$. Assume it happens at a particular time $\gamma=\tilde\gamma$. Then
$a_{2}=x_{0}$ since $\hat{x_{1}z_{2}x_{0}}=\hat{(-\infty)z_{2}x_{1}}$. As in the previous phase, if we divide the second equation by the first one in (\ref{residues}) and take modulus, we get
$$\tilde\gamma=\frac{\beta_{1}}{\beta_{2}}\sqrt{\frac{|z_{2}-a_{2}|}{|z_{1}-a_{2}|}}\frac{|z_{2}-x_{2}|}{|z_{1}-x_{2}|}
=\frac{\beta_{1}}{\beta_{2}}\frac{|z_{2}-x_{2}|}{|z_{1}-x_{2}|}=\Gamma_{2}.$$
From that, we may deduce that $S_{T}$ is a semi-infinite interval
(namely $(-\infty,x_{0})$)
only when $\gamma=\Gamma_{2}$, and that $S_{T}$ is a bounded segment for all values of $\gamma$ in $(\Gamma_{2},1)$, that is during the third phase.
\end{proof}
\subsection{Two limit cases : $\Re z_{1}=\Re z_{2}$ and $z_{2}=1$}
\label{limit-case}
We first consider the limit case
$\Re z_1 = \Re z_2$. Without loss of generality, we may assume $\Re z_2 = \Re z_1 = 0$,
and, thus, $z_1 = \beta_1 i,z_2 = \beta_2 i,$ with $\beta_1, \beta_2 >0.$
Following the analysis carried out in previous Proposition \ref{thm:phasessigned} and Theorem \ref{thm:main}, we are in a position to state the following result.
\begin{theorem}\label{thm:phasessym}
Suppose that the external field $Q$ is given by
\begin{equation*}
Q(x) = \log |x-z_1| - \gamma \log |x-z_2|,\quad z_1 = \beta_1 i,\;z_2 = \beta_2 i,\;\gamma \in (0,1).
\end{equation*}
Then,

\begin{itemize}

\item If $\beta_1 > \beta_2$, the following phases occur

\begin{itemize}

\item {\textbf{Phase 1:}} $\gamma \in (0, \beta_2 / \beta_1]$. Then, $\mu_T = \eta_T$ and $S_T = \R$.


\item {\textbf{Phase 2:}} $\gamma \in (\beta_2 / \beta_1,1)$. Now,
$S_T = (-\infty, -a] \cup [a, +\infty),$ with $a > 0$.

\end{itemize}

\item If $\beta_1 < \beta_2$, we have

\begin{itemize}

\item {\textbf{Phase 1:}} $\gamma \in (0, \beta_1 / \beta_2]$. Then, $\mu_T = \eta_T$ and $S_T = \R$.

\item {\textbf{Phase 2:}} $\gamma \in (\beta_1 / \beta_2,1)$. Now, $S_T = [-a, a],$ with $a > 0$.

\end{itemize}

\end{itemize}

\end{theorem}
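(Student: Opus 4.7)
Plan. The plan is to exploit the reflection symmetry $x\mapsto -x$ of the configuration to reduce the analysis to a much simpler version of Proposition \ref{thm:phasessigned} and Theorem \ref{thm:main}. Since $|x\pm\beta_{j}i|=\sqrt{x^{2}+\beta_{j}^{2}}$ for $x\in\R$, the external field satisfies $Q(-x)=Q(x)$. By uniqueness of the weakly admissible equilibrium measure (Theorem \ref{thm:waT}(viii)), $\mu_{T}$ is itself invariant under $x\mapsto -x$. In particular, the polynomial $A$ appearing in the representation \eqref{algebequat2} has a zero set symmetric under $x\mapsto -x$; combined with $\deg A\leq 3$ (Lemma \ref{lem-not-occur}), this forces $\deg A\in\{0,2\}$, since a set of $1$ or $3$ real points symmetric under negation would produce a support not invariant under $x\mapsto -x$.

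Next, I use formula \eqref{signedfixed} to write down the signed equilibrium measure:
\begin{equation*}
\eta'_{T}(x)=\frac{1}{\pi}\left(\frac{\beta_{1}}{x^{2}+\beta_{1}^{2}}-\frac{\gamma\beta_{2}}{x^{2}+\beta_{2}^{2}}\right)
=\frac{(\beta_{1}-\gamma\beta_{2})x^{2}+\beta_{1}\beta_{2}(\beta_{2}-\gamma\beta_{1})}{\pi(x^{2}+\beta_{1}^{2})(x^{2}+\beta_{2}^{2})}.
\end{equation*}
The sign of $\eta'_{T}$ is controlled by the leading coefficient $\beta_{1}-\gamma\beta_{2}$ and the constant term $\beta_{1}\beta_{2}(\beta_{2}-\gamma\beta_{1})$. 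When $\beta_{1}>\beta_{2}$, the leading coefficient stays positive on $\gamma\in(0,1)$ while the constant term changes sign at $\gamma=\beta_{2}/\beta_{1}<1$. When $\beta_{1}<\beta_{2}$, the constant term stays positive on $(0,1)$ while the leading coefficient changes sign at $\gamma=\beta_{1}/\beta_{2}<1$. In both cases, below and at the threshold $\eta'_{T}\geq 0$ on $\R$, and above the threshold the numerator has two simple real roots $\pm a$; the support of $\eta_{T}^{+}$ is then $(-\infty,-a]\cup[a,\infty)$ if $\beta_{1}>\beta_{2}$ and $[-a,a]$ if $\beta_{1}<\beta_{2}$.

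For Phase 1 of both cases (including the boundary value of $\gamma$), $\eta_{T}$ is a positive measure on $\R$ satisfying \eqref{signedequil} on all of $\R$, so the Frostman conditions \eqref{Frost1}--\eqref{Frost2} are trivially satisfied with equality. Proposition \ref{char-Frost} then identifies $\mu_{T}=\eta_{T}$ with $S_{T}=\R$, which concludes Phase 1.

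For Phase 2, I apply Proposition 4.11 to get $S_{T}\subset\supp\eta_{T}^{+}$. Since $\eta_{T}^{+}\neq\R$ in this regime, the case $\deg A=0$ is excluded and thus $\deg A=2$; by symmetry, $S_{T}$ is either the bounded interval $[-a',a']$ or the complement $(-\infty,-a']\cup[a',\infty)$ for some $a'>0$. In the case $\beta_{1}>\beta_{2}$, the inclusion $S_{T}\subset(-\infty,-a]\cup[a,\infty)$ rules out the bounded option, yielding $S_{T}=(-\infty,-a']\cup[a',\infty)$. In the case $\beta_{1}<\beta_{2}$, Theorem \ref{thm:compsupp} (or equivalently $\supp\eta_{T}^{+}=[-a,a]$) rules out the unbounded option, yielding $S_{T}=[-a',a']$. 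The only step requiring mild care is checking that $a'>0$, which rules out a collapse back to $S_{T}=\R$; this follows from the continuity of the density given by \eqref{densityeqmeas2} together with $\mu_{T}(\R)=T=1-\gamma>0$. I do not anticipate any serious obstacle since every ingredient is already available from Sections 2--3.
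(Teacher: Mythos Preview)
Your argument is correct and takes a genuinely different route from the paper's. The paper proves the theorem as a degenerate limit of Theorem~\ref{thm:main}: it lets $\Re z_{1}\to 0^{-}$ and $\Re z_{2}\to 0^{+}$, tracks how the circle $\CC$ degenerates into a vertical half-line, and reads off the limiting values of $\Gamma_{1}$ and $\Gamma_{2}$ from \eqref{Gamma1}--\eqref{Gamma2} to see which phases survive. That argument is concise but relies (somewhat informally) on continuity of the whole construction in $(z_{1},z_{2})$.

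Your approach instead works directly in the symmetric configuration: you observe that $Q$ is even, so $\mu_{T}$ is even by uniqueness, hence the endpoint set of $S_{T}$ is symmetric; combined with Lemma~\ref{lem-not-occur} this forces $\deg A\in\{0,2\}$. You then read off the phase threshold from the explicit numerator $(\beta_{1}-\gamma\beta_{2})x^{2}+\beta_{1}\beta_{2}(\beta_{2}-\gamma\beta_{1})$ of $\eta_{T}'$, use Proposition~\ref{char-Frost} for Phase~1, and the inclusion $S_{T}\subset\supp\eta_{T}^{+}$ (Proposition~4.11) to decide between the two $\deg A=2$ alternatives in Phase~2. This is more elementary and fully self-contained; it does not need Theorem~\ref{thm:main} or any limiting procedure. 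Two small points worth tightening: (i) the structural results you invoke (Theorem~\ref{thm:waT} and Lemma~\ref{lem-not-occur}) were stated under $\Re z_{1}\neq\Re z_{2}$, so you should remark that their proofs do not actually use that hypothesis; (ii) your closing justification of $a'>0$ is slightly muddled---in the unbounded case $a'=0$ would give $S_{T}=\R$, already excluded by the inclusion, while in the bounded case $a'=0$ would give $S_{T}=\{0\}$, excluded since $\mu_{T}$ has a density and positive mass (or simply because the roots of $A$ are simple).
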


\begin{proof}

We assume $\beta_{1}>\beta_{2}$, and obtain our result as a limit case of Theorem \ref{thm:main}. Namely, starting from a configuration as in Fig. \ref
{intro-circle}, we move $z_{1}$ (resp. $z_2$) continuously to the right (resp. left) until they both lie on the imaginary axis. Then the semi-circle is deformed into a vertical half-line connecting $x_{2}$ with $\infty$, while both points $x_{1}$ and $x_{0}$ move to infinity. Moreover the values of $\Gamma_{1}$ and $\Gamma_{2}$, given in \eqref{Gamma1} and \eqref{Gamma2}, respectively, become
$$\Gamma_{1}=\frac{\beta_{2}}{\beta_{1}},\quad\Gamma_{2}=1,$$
meaning that the first two phases are as in Theorem \ref{thm:main}, while the third phase disappears. The endpoints $a_{1}$ and $a_{2}$ during the second phase are opposite, because of symmetry, and tend to infinity as $\gamma$ tends to 1.

When $\beta_{1}<\beta_{2}$, we move again $z_{1}$ and $z_{2}$ continuously to the imaginary axis. Now the semi-circle is deformed into a vertical half-line connecting $x_{1}$ with $\infty$, while both points $x_{2}$ and $x_{0}$ move to infinity. Moreover the values of $\Gamma_{1}$ and $\Gamma_{2}$ become
$$\Gamma_{1}=\Gamma_{2}=\frac{\beta_{1}}{\beta_{2}},$$
meaning this time that the second phase disappears. During the first phase, $S_{T}=\R$, the conjugate roots $b$ and $\bar b$ start from $z_{2}$ and $\bar z_{2}$ respectively, move along the vertical line through $z_{2}$, and reach $\infty$ as $\gamma$ tends to $\Gamma_{1}=\Gamma_{2}$. Then, a gap appears at infinity, and the endpoints of $S_{T}$ are again opposite because of symmetry. As $\gamma$ tends to 1, $S_{T}$ reduces to the point $x_{1}$ ($=0$).
\end{proof}

Concerning the limit case $z_{2}=1$ where the repellent lies on $\R$, we just observe that, starting from the initial configuration as in Fig. \ref{intro-circle}, and letting $z_{2}$ tend to 1, we get that $x_{2}$ tends to 1, while $\Gamma_{1}$ and $\Gamma_{2}$ become
$$\Gamma_{1}=0,\qquad\Gamma_{2}=\frac{\beta_{1}}{|z_{1}-x_{2}|}
=\frac{\beta_{1}}{\sqrt{4+\beta_{1}^{2}}},$$
meaning that the dynamics of the support $S_{T}$ is as in Theorem \ref{thm:main}
except that the first phase is absent (note that, as expected, $S_{T}$ never contains the point $1$ where the repellent is located).
\subsection{Computation of the support $S_T$}
The aim of this section is to compute explicitly the support $S_{T}$ as $\gamma$ varies.
Dividing the second equation by the first one in (\ref{residues}), taking modulus and then squaring, one gets, for any $\gamma$,
\begin{equation}\label{explicit}
\left(\frac{\gamma\beta_{2}}{\beta_{1}}\right)^{2}=\frac{|z_{2}-a_{1}||z_{2}-a_{2}||z_{2}-x_{2}|^{2}}
{|z_{1}-a_{1}||z_{1}-a_{2}||z_{1}-x_{2}|^{2}}.
\end{equation}
Moreover, as was mentioned in Remark \ref{apoll}, the circle $\CC$ is an Apollonius circle for $a_{1}$ and $a_{2}$. Thus, there exists some $m>0$, depending on $\gamma$, such that
\begin{equation}\label{apolon}
|z_{2}-a_{2}|=m|z_{2}-a_{1}|,\qquad|z_{1}-a_{2}|=m|z_{1}-a_{1}|.
\end{equation}
Hence, (\ref{explicit}) can be simplified to anyone of the two equations
\begin{equation}\label{firstmethod}
\frac{\gamma\beta_{2}}{\beta_{1}}=\frac{|z_{2}-a_{i}||z_{2}-x_{2}|}
{|z_{1}-a_{i}||z_{1}-x_{2}|},\qquad i=1\text{ or }2.
\end{equation}
Together with the expression of $x_{2}$ in (\ref{x0x1x2}), these two equations allow one to recover
$a_{1}$ and $a_{2}$ from $\gamma$. Figure \ref{support} shows an example. The evolution of the support is displayed for the case where $\beta_{1}=3$, $\beta_{2}=4$, for $\gamma$ growing from $0$ to $1$.
\begin{figure}[ht]
\begin{center}
\includegraphics[width=9cm]{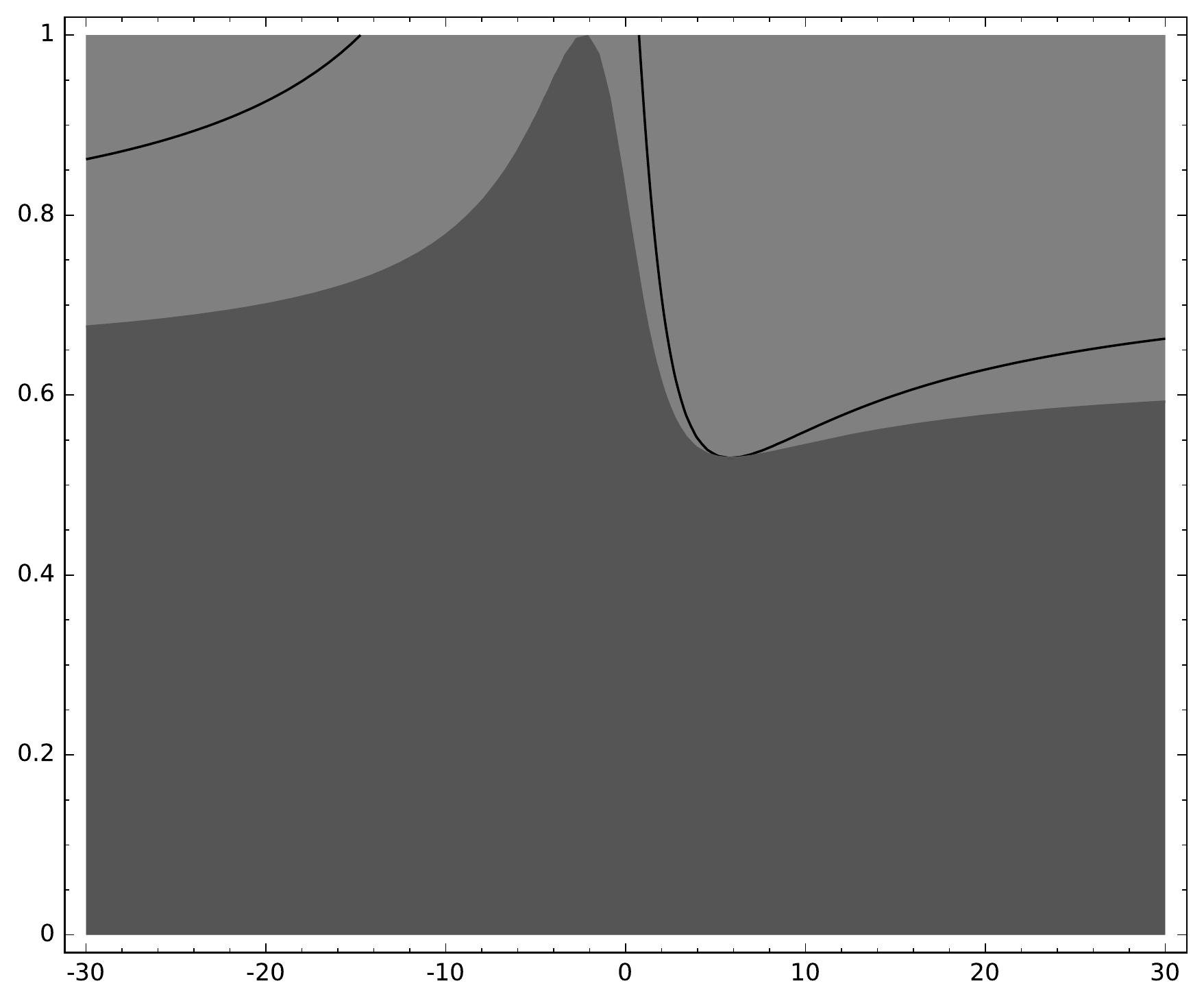}
\caption{Evolution of the support $S_{T}$ as $\gamma$ moves from 0 to 1 ($y$-axis). Here, $\beta_{1}=3$, $\beta_{2}=4$. For small values of $\gamma$, the support is the whole real line, then it becomes the union of two semi-infinite intervals, and finally a bounded interval. When $\gamma=1$, the support reduces to a single point. The black curve shows the endpoints of $S_{T}^{+}$.
For large values of $\gamma$, e.g. $\gamma>0.6$, $S_{T}$ is substantially smaller than $S_{T}^{+}$.}
\label{support}
\end{center}
\end{figure}

In Figure \ref{phase}, we display, for a fixed value of $\gamma=1/2$, the different regions in the positive quadrant of the $(\beta_{1},\beta_{2})$-plane corresponding to different phases. The first phase takes place in the black region, the second phase takes place in the grey region, and the third phase takes place in the white region. In particular, when $\beta_{2}=0$, one sees that the first phase does not occur, as explained at the end of Section \ref{limit-case}.
\begin{figure}[ht]
\begin{center}
\includegraphics[width=9cm]{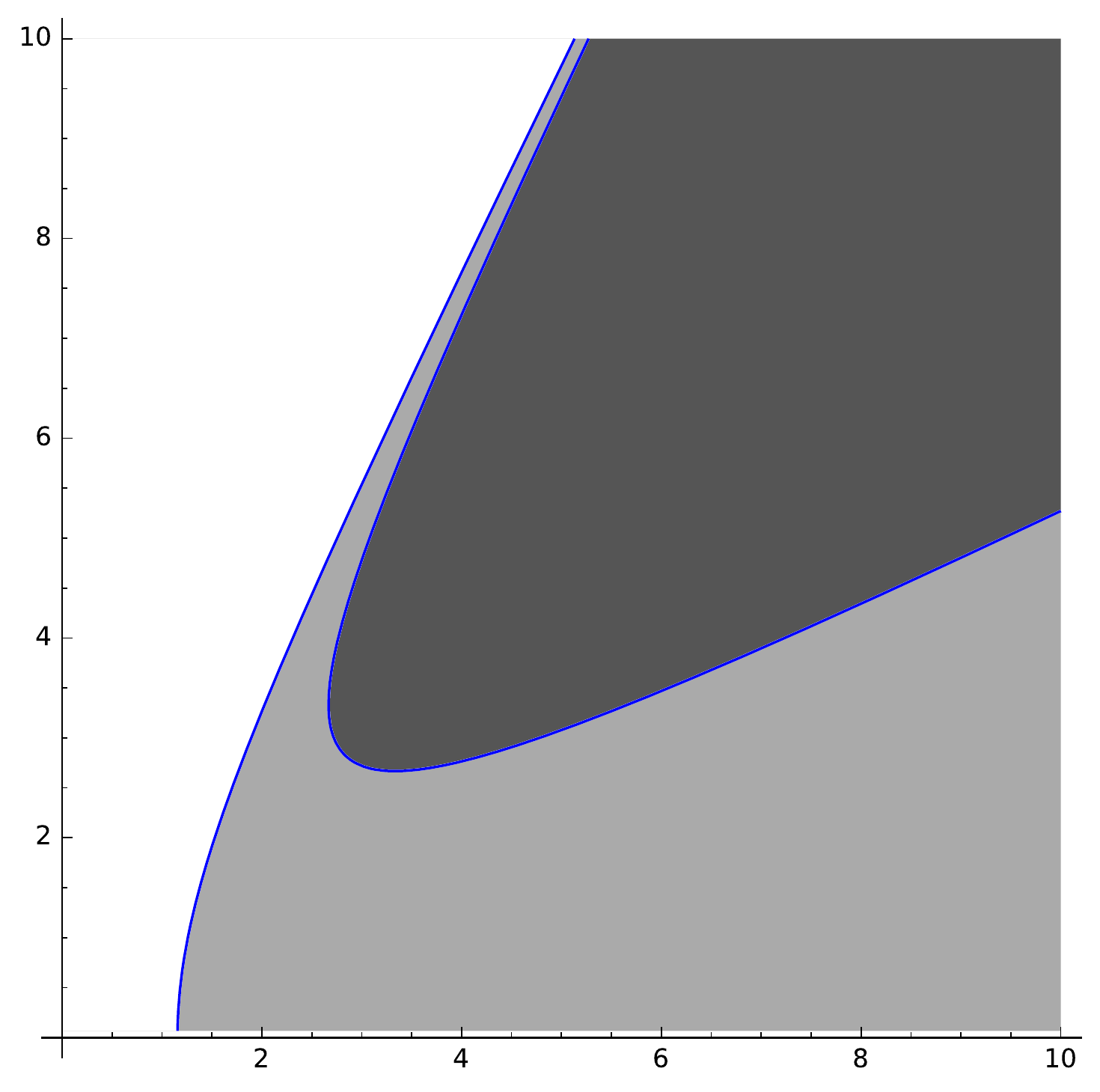}
\caption{Different regions in the positive quadrant of the $(\beta_{1},\beta_{2})$-plane corresponding to different phases when $\gamma=1/2$.}
\label{phase}
\end{center}
\end{figure}

Finally, in the symmetric case described in Theorem \ref{thm:phasessym}, it is easy to see that the density of the equilibrium measure during the second phase is given by
$$
\mu_{T}'(x)=\begin{cases}
{\displaystyle \frac{d|x|\sqrt{x^2-a^2}}{(x^2+\beta_1^2)(x^2+\beta_2^2)}}, \quad
x\in \R \setminus (-a,a)\quad\text{if }\beta_{1}>\beta_{2},\\[10pt]
{\displaystyle\frac{d\sqrt{a^2-x^2}}{(x^2+\beta_1^2)(x^2+\beta_2^2)}},
\quad x\in [-a,a]\quad\text{if }\beta_{1}<\beta_{2},
\end{cases}
$$
with $d$ the normalization constant, and where the endpoint $a$ may be computed, e.g. from (\ref{firstmethod}), as
$$
a = \sqrt{\frac{\gamma^2\,\beta_1^2-\beta_2^2}{1-\gamma^2}}
\quad\text{ if }\beta_{1}>\beta_{2},\text{ and }\quad
a = \beta_1 \beta_2\,\sqrt{\frac{1-\gamma^2}{\gamma^2\,\beta_2^2-\beta_1^2}}
\quad\text{ if }\beta_{1}<\beta_{2}
$$
For instance, if $\beta_1 = 1$ and $\beta_2 = 3$, $\gamma = .5 > {\beta_1}/{\beta_2}$, $S_T = [-a,a]$ with $a = 3 \sqrt{{3}/{5}} \approx 2.32$.
Figure \ref{fig:comparing} shows the densities of $\eta^+_T$ and $\mu_{T}$. Note that $S_{T}^{+}=[-3.87...,3.87...]$ is much bigger than $S_T$.
\begin{figure}[ht]
    \begin{center}
        \includegraphics[scale=0.4]{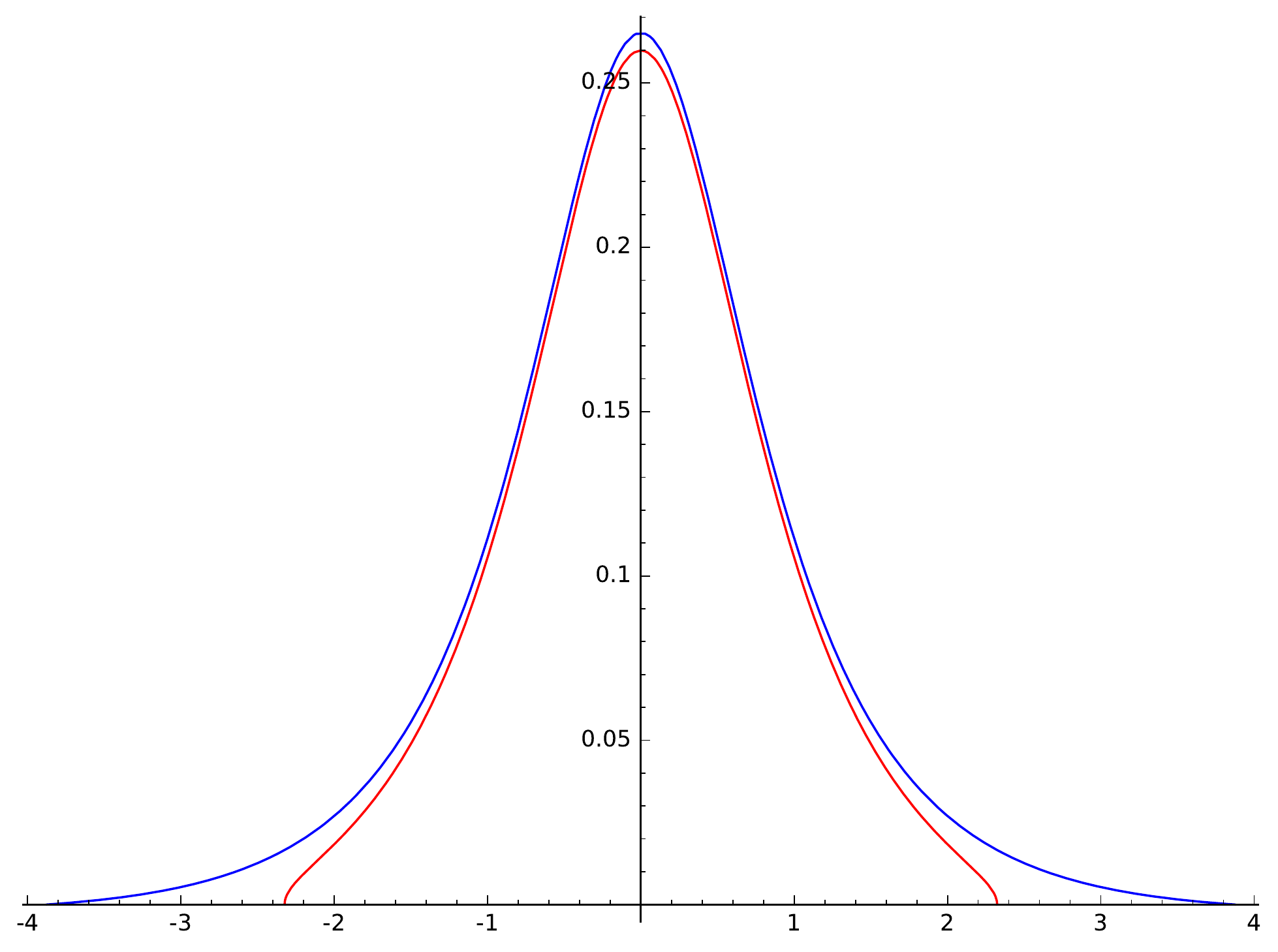}
    \end{center}
    \caption{Densities of $\eta_T^{+}$ and $\mu_T$ in the symmetric case $z_{1}=i, z_2 = 3i$ and $\gamma = .5$.}
    \label{fig:comparing}
\end{figure}

\section{Some results in potential theory} \label{Pot}

We first recall the notion of balayage of measures, and compute the balayage of a pointwise charge on the real axis. We then extend a few classical results from potential theory, usually stated for measures with compact supports, to the case of unbounded supports. In particular, we consider equilibrium measures and signed equilibrium measures in weakly admissible external fields.
We denote by $\Sigma$ a closed subset of $\C$.
\subsection{Balayage of measures}
The notion of balayage of a measure may be found, e.g., in \cite[Chapter IV]{Lan}, \cite[Chapter II.4]{Saff:97} or \cite[Appendix VII]{StT}.
\begin{definition}\label{def:balayage}
Given a closed set $\Sigma\subset \C$ and a measure $\sigma$ with compact support in $\C\setminus \Sigma$, the measure $\widehat{\sigma}:= \Bal (\sigma,\Sigma)$ is said to be the balayage of $\sigma$ onto $\Sigma$ if it has the same mass, $\supp \widehat{\sigma} \subseteq \partial \Sigma$, and
\begin{equation*}
V^{\widehat{\sigma}}(z) = V^{\sigma}(z) + C\quad\text{q.e. on }\Sigma,\qquad V^{\widehat{\sigma}}(z) \leq V^{\sigma}(z) + C\quad\text{on }\C,
\end{equation*}
where $C=0$ if $\C \setminus\Sigma$ is a bounded set.
\end{definition}
We next compute the balayage $\Bal (\delta_z, \R)$ of a positive unit mass at $z\in \C \setminus \R$ onto the real axis. 
\begin{lemma}\label{lem:realbalay}
We have
\begin{equation}\label{zrealbalayage}
d\Bal(\delta_z, \R)(x) = \frac{1}{\pi}\frac{|\Im z|}{|x-z|^2} dx,\quad x\in \R.
\end{equation}
\end{lemma}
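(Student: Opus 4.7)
The plan is to let $\nu$ denote the measure on $\R$ with the claimed density and verify the three defining properties of $\Bal(\delta_z,\R)$ in Definition~\ref{def:balayage}: unit mass, correct support on $\partial\Sigma$, and the potential identity together with the global inequality. By the symmetry under $z\mapsto\bar z$ of both sides of \eqref{zrealbalayage}, it suffices to treat the case $z=a+bi$ with $b=\Im z>0$. The facts $\nu(\R)=1$ and $\supp\nu=\R=\partial\R$ are immediate, the former being the standard Poisson-kernel integral.

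The core step is to compute the logarithmic potential $V^\nu$ throughout $\C$. I would first compute the Cauchy transform $C_\nu(w)=\int d\nu(y)/(y-w)$ by residues: the integrand is the rational function $b/\bigl(\pi(y-w)(y-z)(y-\bar z)\bigr)$ in $y$, which has exactly one simple pole in each open half-plane besides the one at $y=w$. Closing the contour in the half-plane opposite to $w$ yields
$$C_\nu(w)=\frac{1}{\bar z-w}\ \text{ for }\Im w>0,\qquad C_\nu(w)=\frac{1}{z-w}\ \text{ for }\Im w<0.$$
Using $\partial_w V^\nu=\tfrac12 C_\nu$ together with the fact that $V^\nu$ is real-valued (so that $\partial_{\bar w}V^\nu=\overline{\partial_w V^\nu}$), this identifies $V^\nu(w)$, up to an additive constant on each half-plane, with $-\log|w-\bar z|$ on $\{\Im w>0\}$ and with $-\log|w-z|$ on $\{\Im w<0\}$. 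A comparison of the common asymptotic $-\log|w|+o(1)$ as $|w|\to\infty$ (which both sides satisfy since $\nu$ is a probability measure with sufficient decay) forces both additive constants to vanish.

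Having $V^\nu$ in closed form, the balayage conditions follow immediately. By continuity the boundary value on $\R$ is $V^\nu(x)=-\log|x-z|=V^{\delta_z}(x)$, so the first condition holds with $C=0$. On $\C\setminus\R$, one has $V^\nu\le V^{\delta_z}$, with equality in the lower half-plane and strict inequality in the upper half-plane via the elementary identity $|w-\bar z|^2-|w-z|^2=4\,\Im w\,\Im z>0$. The main (mild) obstacle is pinning down the additive constants of integration arising from $C_\nu$; this is handled by the asymptotic normalization of $V^\nu$ at infinity, which in turn is verified by splitting $\int\log|1-y/w|\,d\nu(y)$ into a bounded piece and a tail piece (using the $O(1/y^2)$ decay of $\nu'$), both of which vanish as $|w|\to\infty$.
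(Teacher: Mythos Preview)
Your proof is correct and takes a genuinely different route from the paper. The paper identifies $\Bal(\delta_z,\R)$ with the harmonic measure $\omega(z,\cdot,\H)$, transports it via the M\"obius map $u\mapsto 2\Im(z)/(u-z)$ (sending $\H$ to the exterior of a disk and $z$ to $\infty$) to the equilibrium measure of a circle, and then pulls back the uniform measure $d\theta/2\pi$ to obtain \eqref{zrealbalayage}. You instead posit the candidate measure and verify the defining properties of Definition~\ref{def:balayage} directly, computing $C_\nu$ by residues and integrating to obtain $V^\nu$ in closed form on each half-plane. Your argument is more self-contained---it needs only elementary contour integration rather than the chain balayage $\to$ harmonic measure $\to$ equilibrium measure plus conformal invariance---at the modest cost of the technical step of pinning down the two integration constants via the $-\log|w|+o(1)$ asymptotics (which you handle correctly, e.g.\ by dominated convergence with dominant $\log(1+y^2)\nu'(y)$). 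The paper's approach, by contrast, makes the identification with the Poisson kernel conceptually transparent and would adapt at once to balayage onto the boundary of any simply connected domain.
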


\begin{proof}
Assume, without loss of generality, that $\Im(z)>0$, and consider the conformal map $\varphi$ from the Riemann sphere to itself given by
\begin{equation}\label{conform}
\varphi(u):u\mapsto w=2\frac{\Im(z)}{u-z}.
\end{equation}
It maps the upper half-plane $\H$ to the exterior of the disk $D$ of center $i$ and radius $1$, the axis $\R$ to the circle $K=\partial D$, and sends the point $z$ to infinity.
Denote by $\omega(z,\cdot,\H)$ the harmonic measure for the domain $\H$ with pole at $z$, and by $\omega_{K}$ the equilibrium measure for $K$. Then, for $A$, a Borel subset of $\R$, we have
\begin{equation}\label{rel-bal-harm}
\Bal(\delta_z, \R)(A) = \omega(z,A,\H)=\omega(\infty,\varphi(A),\C\setminus\bar D)=\omega_{K}(\varphi(A)),
\end{equation}
where we refer to \cite[Appendix A3]{Saff:97} for the first equality, to the subordination principle for harmonic measures, cf. \cite[Theorem 4.3.8]{Ran}, for the second equality, and to \cite[Theorem 4.3.14]{Ran} for the third equality.
Formula (\ref{zrealbalayage}) follows from (\ref{conform}), (\ref{rel-bal-harm}), and the fact that $\omega_{K}=d\theta/2\pi$.
\end{proof}

\subsection{Measures with unbounded supports}
\label{unbdd}
We first recall four lemmas.
\begin{lemma}[{\cite[Lemma 2.4]{BKMW}}]\label{weak-lsc}
Assume that a bounded sequence of positive measure $\mu_{n}$ tends weakly to $\mu$, and let $Q$ be a lower bounded, lower semi-continuous function on $\C$. Then,
$$\int Qd\mu\leq\liminf_{n\to\infty}\int Qd\mu_{n}.$$
\end{lemma}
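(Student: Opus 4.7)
The plan is to approximate $Q$ from below by continuous bounded functions, apply the defining property of weak convergence to the approximants, and conclude by monotone convergence.

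First, I would construct an increasing sequence of continuous bounded functions $Q_m$ with $Q_m\uparrow Q$ pointwise on $\C$. A convenient device is the truncated Moreau--Yosida regularization: set
$$\widetilde Q_m(x)\;:=\;\inf_{y\in\C}\bigl\{Q(y)+m|x-y|\bigr\},\qquad Q_m\;:=\;\min(\widetilde Q_m,\,m).$$
Each $\widetilde Q_m$ is $m$-Lipschitz (hence continuous) and bounded below by the same constant $-M$ as $Q$, and $Q_m$ is in addition bounded above by $m$. The sequence $Q_m$ is clearly nondecreasing in $m$ and satisfies $Q_m\leq Q$; the lower semi-continuity of $Q$ is precisely what is needed to guarantee $\widetilde Q_m(x)\uparrow Q(x)$ at every point, including where $Q(x)=+\infty$: a near-minimizer $y_m$ of $\widetilde Q_m(x)$ must approach $x$ (otherwise the term $m|x-y_m|$ would blow up), so the bound $\liminf_m Q(y_m)\geq Q(x)$ provided by lower semi-continuity forces $\widetilde Q_m(x)\to Q(x)$. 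This property is preserved by the truncation $Q_m=\min(\widetilde Q_m,m)$.

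Second, since each $Q_m$ is bounded and continuous, weak convergence of $\mu_n$ to $\mu$ gives $\int Q_m\,d\mu=\lim_n\int Q_m\,d\mu_n$; combined with $Q_m\leq Q$, this yields
$$\int Q_m\,d\mu\;\leq\;\liminf_{n\to\infty}\int Q\,d\mu_n.$$
Third, the functions $Q_m+M$ form a nondecreasing sequence of nonnegative measurable functions converging pointwise to $Q+M$; since $\mu$ has finite total mass (standard exhaustion by $C_c$ cutoffs gives $\mu(\C)\leq\liminf_n\mu_n(\C)<\infty$ from the boundedness hypothesis on $\mu_n$), the monotone convergence theorem produces $\int Q_m\,d\mu\uparrow\int Q\,d\mu$. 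Combining with the previous inequality completes the proof.

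The only real subtlety lies in the construction of $\{Q_m\}$: one must simultaneously secure continuity, boundedness, monotonicity in $m$, and pointwise convergence to $Q$ everywhere, including on the set $\{Q=+\infty\}$. Lower semi-continuity of $Q$ is exactly the ingredient that makes this possible through the inf-convolution plus truncation trick; all remaining steps are routine applications of weak convergence and monotone convergence.
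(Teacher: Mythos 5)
The paper does not reproduce a proof of this lemma --- it is simply cited from \cite{BKMW} --- so there is no in-text argument to compare against. Your proof is correct and is one of the two classical ways to establish lower semi-continuity of $\mu \mapsto \int Q\,d\mu$ along weakly convergent sequences (the other being the Portmanteau open-set characterization combined with the layer-cake formula and Fatou). A few small observations. Your construction $Q_m = \min(\widetilde Q_m,m)$ is exactly what is needed here: the plain inf-convolution $\widetilde Q_m$ is Lipschitz and increases to $Q$, but is not bounded above in general, and the defining property of narrow (i.e.\ $C_b$-weak) convergence requires bounded continuous test functions; the truncation at $m$ handles this and keeps the sequence monotone. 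The argument that $\widetilde Q_m(x)\uparrow Q(x)$ is right, but it is cleaner to phrase it as: with near-minimizers $y_m$, the bound $m|x-y_m|\le \alpha+1+M$ forces $y_m\to x$, then $\liminf_m Q(y_m)\le \lim_m\widetilde Q_m(x)=\alpha$ (using $Q(y_m)\le \widetilde Q_m(x)+\epsilon_m$) while lower semi-continuity gives $\liminf_m Q(y_m)\ge Q(x)$, whence $Q(x)\le\alpha$ and, since $\alpha\le Q(x)$ is automatic, $\alpha=Q(x)$. Finally, the finiteness of $\mu(\C)$ --- which you correctly note is needed to pass from $\int(Q_m+M)\,d\mu\uparrow\int(Q+M)\,d\mu$ to $\int Q_m\,d\mu\uparrow\int Q\,d\mu$ --- actually comes for free under narrow convergence (take the constant test function $1$), so the exhaustion-by-cutoffs remark is more than is needed. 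Everything else is a routine but correctly executed application of weak convergence and monotone convergence.
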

\begin{lemma}[{\cite[Lemma 2.2]{BKMW}}]\label{cap-mu}
Let $\mu$ be a finite measure supported on $\Sigma$, of finite energy. Then
$\mu(E)=0$ for every Borel polar set $E$.
\end{lemma}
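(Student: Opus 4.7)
The plan is a direct proof by contradiction. Suppose, towards a contradiction, that $\mu(E)>0$ for some Borel polar set $E\subset\C$. Since any finite Borel measure on $\C$ is Radon, inner regularity of $\mu$ yields a compact subset $K\subset E$ with $\mu(K)>0$. Being contained in a polar set, $K$ is itself polar, so $\cp(K)=0$.

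Next I would verify that the restriction $\mu_{K}:=\mu|_{K}$ has finite logarithmic energy, so that the normalized measure $\nu:=\mu_{K}/\mu(K)$ is a probability measure supported on $K$ with $I(\nu)<\infty$. Using the decomposition $\log(1/|x-y|)=\log^{+}(1/|x-y|)-\log^{+}|x-y|$, compactness of $K$ bounds the second term uniformly on $K\times K$, while the first term satisfies
$$\iint_{K\times K}\log^{+}(1/|x-y|)\,d\mu(x)d\mu(y)\leq\iint\log^{+}(1/|x-y|)\,d\mu(x)d\mu(y)<\infty,$$
the finiteness of the right-hand side being built into the hypothesis $I(\mu)<\infty$ in the weakly admissible framework (cf. the discussion following \eqref{condwadmiss}, where both $\log^{+}|x-y|$ and $\log^{+}(1/|x-y|)$ must be integrable against $\mu\otimes\mu$ for the double integral to be well-defined and finite). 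Hence $I(\mu_{K})<\infty$ and consequently $I(\nu)<\infty$.

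The contradiction then follows immediately from the variational characterization $-\log\cp(K)=\inf\{I(\sigma):\sigma\text{ is a probability measure on }K\}$: since $\cp(K)=0$, this infimum equals $+\infty$, so no probability measure on $K$ can have finite energy, contradicting the existence of $\nu$. Consequently $\mu(E)=0$. The only delicate step is the second one: for compactly supported $\mu$ the inheritance of finite energy by restrictions is immediate, but for a measure of unbounded support in the weakly admissible setting one must invoke the absolute convergence of the defining double integral to control both the local singularity and the tail behavior when truncating to the compact piece $K$.
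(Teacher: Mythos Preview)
The paper does not supply its own proof of this lemma; it is quoted directly from \cite[Lemma 2.2]{BKMW} without argument. Your proposal is the standard proof (and essentially the one given in the cited reference): pass to a compact subset by inner regularity, show the restriction inherits finite energy, and contradict $\cp(K)=0$.

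Your handling of the one genuinely delicate point is correct. For a measure with unbounded support, ``$I(\mu)$ finite'' must be read as the double integral being absolutely convergent, i.e.\ both $\iint\log^{+}(1/|x-y|)\,d\mu\otimes d\mu$ and $\iint\log^{+}|x-y|\,d\mu\otimes d\mu$ are finite; otherwise the energy is either $\pm\infty$ or undefined. Once that is granted, monotonicity gives $\iint_{K\times K}\log^{+}(1/|x-y|)\,d\mu\otimes d\mu<\infty$, and compactness of $K$ bounds the other piece, so $I(\mu_{K})<\infty$ follows. The rest is immediate.
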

Note that $\mu$ is not supposed to have compact support, compare with \cite[Theorem 3.2.3]{Ran}.
\begin{lemma}
Let $\mu$ and $\nu$ be two probability measures supported on $\Sigma$, with finite energies. Then
$$I(\nu-\mu)\geq 0.$$
\end{lemma}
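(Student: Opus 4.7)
The plan is to realise $-\log|z|$ as an integral superposition of positive-definite Gaussian kernels and to exploit the hypothesis $(\nu-\mu)(\Sigma)=0$. The starting point is the Frullani identity $\int_{0}^{\infty}(e^{-at}-e^{-bt})\,dt/t=\log(b/a)$ for $a,b>0$, applied with $a=1$ and $b=|z|^{2}$:
\[
-\log|z| \;=\; \int_{0}^{\infty}\frac{e^{-t|z|^{2}}-e^{-t}}{2t}\,dt,\qquad z\in\C\setminus\{0\}.
\]
For each $t>0$, the Gaussian kernel $(x,y)\mapsto e^{-t|x-y|^{2}}$ is positive definite on $\R\times\R$: it is, up to a positive multiplicative constant, the Fourier transform of a nonnegative function, so $\iint e^{-t|x-y|^{2}}\,d\tau(x)\,d\tau(y)\geq 0$ for every finite signed measure $\tau$.

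Setting $\sigma := \nu-\mu$, which satisfies $\sigma(\Sigma)=0$, I would integrate the Frullani representation above against $d\sigma(x)\,d\sigma(y)$. The additive constant $e^{-t}$ contributes $e^{-t}\,\sigma(\Sigma)^{2}=0$ and hence drops out; granting Fubini,
\[
I(\sigma) \;=\; \int_{0}^{\infty}\frac{1}{2t}\left(\iint e^{-t|x-y|^{2}}\,d\sigma(x)\,d\sigma(y)\right)\!dt.
\]
Each inner integral is nonnegative by the positive-definiteness of the Gaussian kernel recalled above, and $1/(2t)>0$, so the outer integral is also nonnegative. This yields $I(\sigma)=I(\nu-\mu)\geq 0$.

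The main obstacle is the justification of Fubini (and of the fact that $I(\sigma)$ is well-defined as a finite real number) when the supports of $\mu$ and $\nu$ may be unbounded. In the compactly supported case the argument is immediate, since $-\log|x-y|$ is bounded below on $\supp\mu\times\supp\nu$, and the finite-energy hypothesis together with $|\sigma|\leq \mu+\nu$ gives $\iint|\log|x-y||\,d|\sigma|(x)\,d|\sigma|(y)<\infty$, from which Fubini follows at once. In the general case I would approximate: let $\mu_{R},\nu_{R}$ be the restrictions of $\mu,\nu$ to $[-R,R]\cap\Sigma$, each normalized to be a probability measure, so that $\sigma_{R}:=\nu_{R}-\mu_{R}$ has zero total mass and compact support. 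The compact-support case gives $I(\sigma_{R})\geq 0$, and one then passes to the limit $R\to\infty$ using the finite-energy hypothesis to control the tails via monotone convergence on the $\log^{-}$-piece and dominated convergence on the Gaussian-kernel side (which is bounded by $1$). This passage to the limit in the unbounded setting is the delicate step; the positivity itself is a direct consequence of the Gaussian representation.
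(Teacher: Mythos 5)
The paper offers no proof of its own for this lemma: it cites \cite[Lemma I.1.8]{Saff:97} for the compactly supported case and \cite{CKL} for the unbounded one. Your Gaussian-superposition argument via the Frullani identity is essentially the classical proof that underlies those references, so your route is a legitimate and more self-contained alternative to a bare citation; the key inputs (positive-definiteness of the Gaussian kernel, cancellation of the additive constant because $(\nu-\mu)(\Sigma)=0$) are correct.

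Two points need tightening, one of which is a genuine gap. In the compact case you assert that finite energy of $\mu$ and $\nu$ together with $|\sigma|\leq\mu+\nu$ gives $\iint|\log|x-y||\,d|\sigma|\,d|\sigma|<\infty$ ``at once.'' Expanding $|\sigma|\leq\mu+\nu$ produces the cross term $\iint|\log|x-y||\,d\mu\,d\nu$, whose finiteness is exactly the finiteness of the mutual energy $I(\mu,\nu)$. That does \emph{not} follow in one line from $I(\mu),I(\nu)<\infty$; the usual route to it is the energy Cauchy--Schwarz inequality, which presupposes the very positivity you are proving. The clean fix is to use Tonelli rather than Fubini: shift to the nonnegative kernel $k(x,y)=-\log|x-y|+\log(2M)$ on the product of supports (with $\supp\mu\cup\supp\nu\subset\{|z|\leq M\}$), write $k=\int_0^\infty\tilde g_t\,dt$ with $\tilde g_t(x,y)=\frac{1}{2t}\left(e^{-t|x-y|^2}-e^{-4tM^2}\right)\geq 0$ there, use the $t$-wise bound $\iint\tilde g_t\,d\mu\,d\nu\leq\frac{1}{2}\left(\iint\tilde g_t\,d\mu\,d\mu+\iint\tilde g_t\,d\nu\,d\nu\right)$ (which follows from $\iint e^{-t|x-y|^2}\,d(\mu-\nu)\,d(\mu-\nu)\geq 0$ and $\mu(\C)=\nu(\C)$), and integrate in $t$ by Tonelli. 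This yields $\iint k\,d\mu\,d\nu<\infty$ as a byproduct, after which the same decomposition applied to $I(\sigma)$ gives $I(\sigma)\geq 0$. Second, the truncate-and-renormalize scheme in the unbounded case only gives $I(\sigma_R)\geq 0$ and still requires showing $I(\sigma_R)\to I(\sigma)$; the interchange of $R\to\infty$ with the $dt$-integral is again a Tonelli-type issue rather than a simple domination. You should invoke explicitly that, by \cite[Lemma 3.2]{BLW} (quoted in the paper), finite energy of a probability measure on an unbounded set is equivalent to $\int\log(1+|x|)\,d\mu<\infty$; this is what controls the far-field part of the kernel and makes the tail estimates close.
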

The case of measures with compact supports is in \cite[Lemma I.1.8]{Saff:97}. The unbounded case is proven in \cite{CKL}.
\begin{lemma}[{\cite[Lemma 3.2]{BLW}}]
Let $\mu$ be a probability measure supported on an unbounded set $\Sigma$. Then,
\begin{equation}\label{cond-super}
\int\log(1+|t|)d\mu(t)<\infty\quad\Longleftrightarrow\quad
-\infty<I(\mu)\quad\Longleftrightarrow\quad
\exists z_{0}\in\C,~V^\mu(z_{0})>-\infty.
\end{equation}
If these conditions are satisfied, the potential $V^\mu$ is a superharmonic function and
\begin{equation*}
-V^\mu(z)\leq\log(1+|z|)+\int\log(1+|t|)d\mu(t).
\end{equation*}
\end{lemma}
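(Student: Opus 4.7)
The plan is threefold: (i) derive the displayed inequality from an elementary triangle-inequality bound, (ii) establish the three-way equivalence via a cyclic chain $(1)\Rightarrow(2)\Rightarrow(3)\Rightarrow(1)$, and (iii) deduce superharmonicity from the representation of $V^\mu$ as an integrated family of superharmonic kernels.

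For step (i), the elementary estimate $|z-t|\leq(1+|z|)(1+|t|)$, a direct consequence of the triangle inequality, gives after taking logarithms and integrating against $d\mu(t)$,
$$-V^\mu(z)=\int\log|z-t|\,d\mu(t)\leq\log(1+|z|)+\int\log(1+|t|)\,d\mu(t),$$
which is the stated bound as soon as the right-hand side is finite, i.e.\ under condition (1).

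For step (ii), I would run a cyclic chain. The same basic inequality, rewritten as $-\log|x-y|\geq-\log(1+|x|)-\log(1+|y|)$ and integrated against $d\mu(x)\,d\mu(y)$, yields $I(\mu)\geq-2\int\log(1+|t|)\,d\mu(t)$, so $(1)\Rightarrow(2)$. For $(2)\Rightarrow(3)$, the hypothesis $I(\mu)>-\infty$ amounts to $\iint\log^{+}|x-y|\,d\mu(x)\,d\mu(y)<\infty$, so Fubini--Tonelli furnishes a point $z_0$ with $\int\log^{+}|x-z_0|\,d\mu(x)<\infty$, and hence $V^\mu(z_0)>-\infty$ (regardless of whether the $\log^{-}$ part diverges, in which case $V^\mu(z_0)=+\infty$). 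For $(3)\Rightarrow(1)$, I would observe that for $|t|\geq 2|z_0|+2$ one has $|z_0-t|\geq(1+|t|)/2$, so $\log(1+|t|)\leq C(z_0)+\log^{+}|z_0-t|$ uniformly in $t\in\C$; since $V^\mu(z_0)>-\infty$ is precisely $\int\log^{+}|z_0-t|\,d\mu(t)<\infty$, integrating this pointwise bound against $d\mu$ closes the loop.

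Finally, for superharmonicity, the kernel $z\mapsto-\log|z-t|$ is superharmonic on $\C$ and, thanks to the minorant produced in step (i), the family $(-\log|z-t|)_t$ is bounded below on any closed disk $D\subset\C$ by a $\mu$-integrable function of $t$. Fatou's lemma then yields lower semicontinuity of $V^\mu$, while interchanging the $d\mu(t)$ integration with spherical averages via Fubini transfers the super-mean-value inequality from each kernel to $V^\mu$. I expect the main obstacle to lie precisely in this interchange near the pole $z=t$ and at infinity: one must split $\log|z-t|$ into its positive and negative parts and check each separately, and it is at this point that the hypothesis $\int\log(1+|t|)\,d\mu(t)<\infty$ is indispensable to guarantee uniform integrable bounds on compacta.
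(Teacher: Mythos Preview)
The paper does not supply its own proof of this lemma; it is simply quoted from \cite[Lemma 3.2]{BLW} and used as a black box. There is therefore nothing to compare against.

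That said, your argument is correct and essentially the natural one. The triangle-inequality bound $|z-t|\leq(1+|z|)(1+|t|)$ is exactly the device the paper invokes elsewhere (e.g.\ in the proofs of Theorem~\ref{thm:waT} and Theorem~\ref{th-descent}), so your step~(i) and the implication $(1)\Rightarrow(2)$ are in the same spirit. The remaining implications $(2)\Rightarrow(3)\Rightarrow(1)$ via Fubini on $\log^{+}$ and the comparison $\log(1+|t|)\leq C(z_0)+\log^{+}|z_0-t|$ are standard and sound. For the superharmonicity step, your observation that the lower bound from~(i) furnishes a $\mu$-integrable minorant on each compact is precisely what is needed to justify both Fatou (for lower semicontinuity) and Tonelli (for the super-mean-value inequality); no further splitting into $\log^{\pm}$ is really necessary once you add the integrable minorant to make the integrand nonnegative.
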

When the measure $\mu$ satisfies the first inequality in (\ref{cond-super}) we say that $\mu$ integrates the logarithm at infinity.

We next give a version of the principle of descent for measures with unbounded supports, compare with \cite[Theorem I.6.8]{Saff:97}.
\begin{theorem}[Principle of descent]\label{th-descent}
Let $\mu_{n}$ be a log-tight sequence of finite positive measures, that is
\begin{equation}\label{log-t}
\forall\epsilon>0,~\exists\text{ compact set }K\subset\C,~\forall n\in\N,\quad
\int_{\C\setminus K}\log(1+|t|)d\mu_{n}(t)\leq\epsilon,
\end{equation}
and assume that $\mu_{n}$ tends weak-* to a finite measure $\mu$. Then,
\begin{equation}\notag
V^\mu(z)\leq\liminf_{n\rightarrow\infty}V^{\mu_{n}}(z),\qquad z\in\C.
\end{equation}
More generally, if $z_{n}$ is a sequence of points tending to a limit $z^{*}\in\C$, then
\begin{equation}\label{descent}
V^\mu(z^{*})\leq\liminf_{n\rightarrow\infty}V^{\mu_{n}}(z_{n}).
\end{equation}
\end{theorem}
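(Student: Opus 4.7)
The strategy is to reduce the statement to the classical principle of descent for measures with supports in a fixed compact set (cf.\ \cite[Theorem I.6.8]{Saff:97}) by truncating each $\mu_{n}$ to a large ball and handling the tail separately via the log-tightness hypothesis \eqref{log-t}. Fix $\delta>0$. Since $\mu$ is finite, we may choose $R>2(1+|z^{*}|)$ with $\mu(\partial B(0,R))=0$ (which excludes only countably many values of $R$) such that, setting $K:=\overline{B(0,R)}$,
\[
\int_{\C\setminus K}\log(1+|t|)\,d\mu_{n}(t)\leq\delta \qquad\text{for all }n\in\N.
\]
Since the function $\mathbf{1}_{\C\setminus K}\log(1+|t|)$ is nonnegative and lower semi-continuous on $\C$ (the set $\C\setminus K$ being open), Lemma~\ref{weak-lsc} transfers the same bound to the weak-* limit: $\int_{\C\setminus K}\log(1+|t|)\,d\mu(t)\leq\delta$. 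Then split
\[
V^{\mu_{n}}(z_{n})=I_{n}^{\rm in}+I_{n}^{\rm out},\qquad I_{n}^{\rm in}:=\int_{K}-\log|z_{n}-t|\,d\mu_{n}(t),\quad I_{n}^{\rm out}:=\int_{\C\setminus K}-\log|z_{n}-t|\,d\mu_{n}(t),
\]
together with the analogous decomposition $V^{\mu}(z^{*})=I^{\rm in}+I^{\rm out}$.

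For the tail terms, observe that once $n$ is large enough to have $|z_{n}|\leq R/2$, any $t\in\C\setminus K$ satisfies $|z_{n}-t|\geq R/2>1$, so $-\log|z_{n}-t|\leq 0$, while the elementary inequality $|z_{n}-t|\leq(1+|z_{n}|)(1+|t|)$ supplies the matching lower bound
\[
-\log|z_{n}-t|\geq-\log(1+R/2)-\log(1+|t|).
\]
Combined with the crude estimate $\mu_{n}(\C\setminus K)\leq \delta/\log(1+R)$ and the log-tightness bound, this yields $|I_{n}^{\rm out}|\leq 2\delta$ once $R$ is large; the identical reasoning applied to $(\mu,z^{*})$ gives $|I^{\rm out}|\leq 2\delta$.

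For the interior term, the condition $\mu(\partial K)=0$ together with $\mu_{n}\weakto\mu$ implies $\mu_{n}|_{K}\weakto\mu|_{K}$ as finite Borel measures on the compact set $K$ (by a Portmanteau-type argument applied to bounded functions whose discontinuities lie in $\partial K$). The classical principle of descent then gives $\liminf_{n}I_{n}^{\rm in}\geq I^{\rm in}$. Collecting these estimates and using $\liminf(a_{n}+b_{n})\geq\liminf a_{n}+\liminf b_{n}$,
\[
\liminf_{n\to\infty}V^{\mu_{n}}(z_{n})\geq \liminf_{n} I_{n}^{\rm in}+\liminf_{n}I_{n}^{\rm out}\geq I^{\rm in}-2\delta\geq V^{\mu}(z^{*})-4\delta,
\]
and letting $\delta\to 0$ concludes the proof. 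The principal obstacle -- and the only real departure from the compactly-supported case -- is controlling the logarithmic contribution of mass escaping to infinity uniformly in $n$ and transferring this control to the limit measure $\mu$; Lemma~\ref{weak-lsc} applied to $\mathbf{1}_{\C\setminus K}\log(1+|t|)$ is precisely what makes this transfer work.
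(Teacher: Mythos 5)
Your proof is correct and follows essentially the same strategy as the paper: truncate to a large disk $K$ with $\mu(\partial K)=0$, apply the classical principle of descent to $\mu_{n}|_{K}\weakto\mu|_{K}$, and control the tail contribution uniformly via the log-tightness hypothesis. The only cosmetic difference is that you transfer the tail bound to the limit measure $\mu$ using Lemma~\ref{weak-lsc} and then run a two-sided estimate on $I^{\rm out}$, whereas the paper avoids this by choosing $K$ large enough that $\log|z^{*}-t|\geq 0$ off $K$, so that $V^{\mu}(z^{*})\leq V^{\mu|_{K}}(z^{*})$ holds outright.
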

\begin{proof}
We prove \eqref{descent}. Let $\epsilon>0$. We choose a closed disk $K$ centered at $z^{*}$ sufficiently large so that, for $z$ in some neighborhood of $z^{*}$,
$$
\forall n\in\N,\quad
\int_{\C\setminus K}\log(|z-t|)d\mu_{n}(t)\leq\epsilon,
$$
which is possible, in view of \eqref{log-t} and the inequality $|z-t|\leq(1+|z|)(1+|t|)$.
Moreover, by the finiteness assumption of the measure $\mu$, we may choose $K$ such that $\mu (\partial K) = 0$, where $\partial K$ denotes the boundary of $K$. Then, by \cite[Theorem 0.5']{Lan}, the restrictions $\mu_{n|K}$ tend weak-* to $\mu_{|K}$. By the principle of descent for compactly supported measures, we have
$$V^{\mu_{|K}}(z^{*})\leq\liminf_{n\rightarrow\infty}V^{\mu_{n|K}}(z_{n}).
$$
Furthermore, assuming that $K$ is large enough so that $\log|z^{*}-t|\geq0$ for $t\notin K$, we have
\begin{align*}
V^{\mu}(z^{*}) & \leq V^{\mu_{|K}}(z^{*})\leq\liminf_{n\rightarrow\infty}
\left(V^{\mu_{n}}(z_{n})+\int_{\C\setminus K}\log(|z_{n}-t|)d\mu_{n}(t)\right)\\
& \leq
\liminf_{n\rightarrow\infty}
V^{\mu_{n}}(z_{n})+\epsilon.
\end{align*}
Letting $\epsilon$ go to zero, we obtain \eqref{descent}.
\end{proof}
Assume now that $Q$ is a weakly admissible weight on an unbounded set
$\Sigma$.
It is known from \cite[Theorem 3.4]{BLW} that the potential of the weighted equilibrium measure $\mu_{Q}$ still satisfies the Frostman inequalities \eqref{Frost1}--\eqref{Frost2}.
In the next proposition, we verify that such inequalities also characterize $\mu_{Q}$, as is the case for admissible weights.
\begin{proposition}\label{char-Frost}
Assume $\mu$ is a measure supported on $\Sigma$, with finite energy $I(\mu)$, and there exists a (finite) constant $C$ such that
\begin{align*}
V^\mu(z)+Q(z)\geq C & \quad\text{q.e. on }\Sigma,\\
V^\mu(z)+Q(z)\leq C & \quad\text{q.e. on }S_{\mu},
\end{align*}
where $Q$ is a weakly admissible weight on $\Sigma$. Then $\mu=\mu_{Q}$.
\end{proposition}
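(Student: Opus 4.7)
The plan is to adapt the classical proof of \cite[Theorem I.3.3]{Saff:97}, which characterizes the weighted equilibrium measure by the Frostman inequalities, to the weakly admissible setting. The strategy is to combine the four Frostman--type conditions (the two given in the hypothesis for $\mu$ and the two satisfied by $\mu_Q$ via \eqref{Frost1}--\eqref{Frost2}) into the single inequality $I(\mu-\mu_Q)\leq 0$, and then to invoke the positivity of the logarithmic energy in its unbounded-support version. Throughout, the reference mass is $t:=\mu_Q(\Sigma)$, and it is understood that $\mu(\Sigma)=t$, so that $\mu-\mu_Q$ is a signed measure of total mass zero. Both $\mu$ and $\mu_Q$ have finite energy (the latter by \cite[Theorem 3.4]{BLW}); hence by Lemma \ref{cap-mu} neither charges polar sets, and consequently each quasi-everywhere inequality on $\Sigma$ or on a support holds almost everywhere with respect to both measures.

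Denote by $c_Q$ the equilibrium constant corresponding to $\mu_Q$. Integrating each of the four Frostman--type inequalities against the appropriate measure, and invoking Fubini's theorem to identify $\int V^{\mu_Q}\,d\mu=\int V^{\mu}\,d\mu_Q$, one obtains
\begin{align*}
I(\mu)+\int Q\,d\mu &\leq C\,t, &
\int V^{\mu_Q}\,d\mu+\int Q\,d\mu &\geq c_Q\,t,\\
I(\mu_Q)+\int Q\,d\mu_Q &\leq c_Q\,t, &
\int V^{\mu}\,d\mu_Q+\int Q\,d\mu_Q &\geq C\,t.
\end{align*}
Summing the two right-hand inequalities and subtracting the two left-hand ones, the $Q$-integrals and the constants $C$ and $c_Q$ all cancel, leaving
$$
I(\mu)-2\int V^{\mu_Q}\,d\mu+I(\mu_Q)\leq 0,
$$
which is precisely $I(\mu-\mu_Q)\leq 0$, a quantity well-defined by bilinearity since both $\mu$ and $\mu_Q$ have finite energies.

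On the other hand, the principle of positivity of energy for signed measures of total mass zero with finite energy and possibly unbounded support (the third lemma recalled above, whose unbounded version is proved in \cite{CKL}) applied to $\mu-\mu_Q$ gives $I(\mu-\mu_Q)\geq 0$, with equality if and only if $\mu=\mu_Q$. Combined with the previous inequality, this forces $\mu=\mu_Q$. The main obstacle I anticipate is justifying the absolute convergence of the integrals appearing above, most delicately $\int Q\,d\mu$ and the cross term $\int V^{\mu_Q}\,d\mu$: lower bounds follow from the lower semicontinuity and lower boundedness of $Q$ together with the lower boundedness of $V^{\mu_Q}$ on compact sets, while upper bounds follow from the weak admissibility estimate $Q(x)\leq \mathrm{const}+t\log(1+|x|)$ at infinity combined with \eqref{cond-super}, which guarantees that any finite-energy measure on $\Sigma$ integrates $\log(1+|x|)$ at infinity.
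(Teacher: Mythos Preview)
Your core argument is correct and is the standard ``symmetric'' version of the characterization proof: combine all four Frostman inequalities (two for $\mu$, two for $\mu_Q$) into $I(\mu-\mu_Q)\leq 0$, then invoke positivity of energy. The paper takes a closely related but slightly different route: it uses only the two Frostman inequalities for $\mu$ to show directly that $I_Q(\nu)\geq I_Q(\mu)$ for every competitor $\nu$ of finite energy, whence $\mu$ is the minimizer and thus equals $\mu_Q$. The paper's version never splits the integrand $V^\mu+Q$ into its two pieces; it works throughout with the combined quantity, which is bounded below q.e.\ on $\Sigma$, so all integrals are automatically well-defined in $(-\infty,+\infty]$ and no separate finiteness check is needed.

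There is, however, a genuine error in your finiteness justification. You claim that weak admissibility yields $Q(x)\leq\mathrm{const}+t\log(1+|x|)$ at infinity. This is false: condition \eqref{condwadmiss} is a $\liminf$ and gives only a \emph{lower} bound on $Q$ at infinity; $Q$ may grow arbitrarily fast along subsequences (indeed $M=+\infty$ is permitted). The upper bound on $\int Q\,d\mu$ you need comes instead from the second Frostman inequality in the hypothesis: since $V^\mu+Q\leq C$ holds $\mu$-a.e.\ and $V^\mu+Q$ is bounded below on $\Sigma$ (combine the lower boundedness of $Q$ with the estimate $V^\mu(z)\geq -t\log(1+|z|)-\int\log(1+|s|)\,d\mu(s)$ from \eqref{cond-super}), integration gives $\int(V^\mu+Q)\,d\mu\leq Ct$, hence $\int Q\,d\mu\leq Ct-I(\mu)<\infty$. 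Similarly for $\int Q\,d\mu_Q$. For the cross term, finiteness from above is in fact a consequence of the positivity lemma itself: $I(\mu-\mu_Q)\geq 0$ forces $\int V^{\mu_Q}\,d\mu\leq\tfrac12(I(\mu)+I(\mu_Q))<\infty$. With these corrections your argument goes through.
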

\begin{proof}
Let $\nu$ be a measure on $\Sigma$ with finite energy $I(\nu)$. Integrating the first inequality with respect to $\nu$ and the second with respect to $\mu$ shows that
$$\int (V^\mu+Q)d\nu\geq\int (V^\mu+Q)d\mu.$$
Now, we have
\begin{equation}\label{ineq}
I_{Q}(\nu)=I_{Q}(\mu+(\nu-\mu))=I_{Q}(\mu)+2\int(V^\mu+Q)d(\nu-\mu)+I(\nu-\mu)\geq I_{Q}(\mu),
\end{equation}
where the sum in the third expression is well-defined (meaning there is no undetermined form $\infty-\infty$ in it) since $I_{Q}(\mu)>-\infty$, $\int(V^\mu+Q)d\mu$ equals the finite constant $C$, $\int(V^\mu+Q)d\nu\geq C$ and $I(\nu-\mu)\geq0$. Since the inequality (\ref{ineq}) is satisfied for any measure $\nu$ of finite energy, one derives that $\mu=\mu_{Q}$ (recall from \cite[Theorem 3.4]{BLW} that a measure with infinite energy cannot be the equilibrium measure).
\end{proof}
\subsection{A version of de La Vall\'ee Poussin theorem for measures with unbounded supports}
We give an extended version of ``de La Vall\'ee Poussin'' theorem. The classical version assumes that the measures have compact supports, see e.g. \cite[Theorem IV.4.5]{Saff:97}.
\begin{theorem}\label{dLVP}
Let $\mu$ and $\nu$ be two measures supported in $\C$, which integrate the logarithm at infinity, satisfying
$$(\supp\mu)\cup(\supp\nu)\neq\C,$$
and let $\Omega$ be a domain where $V^\mu$ and $V^{\nu}$ are finite and satisfy, for some constant $c$,
\begin{equation}\label{LVP}
V^\mu(z)\leq V^{\nu}(z)+c,\qquad z\in\Omega.
\end{equation}
Then, if $A\subset\Omega$ is the subset where equality holds, we have $\nu_{|A}\leq\mu_{|A}$.
\end{theorem}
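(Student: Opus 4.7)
The strategy is to reduce to the classical de~La~Vall\'ee~Poussin theorem for compactly supported measures by truncating the tails of $\mu$ and $\nu$ outside a large disk and replacing them by their balayages onto its boundary. Since the tails integrate the logarithm at infinity, these balayages will exist and will modify the potentials only by additive constants on the disk, which keeps the inequality structure intact.

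First, I would localize: it suffices to prove $\nu(B)\le\mu(B)$ for every Borel set $B\subset A$ contained in some bounded open set $W$ with $\overline W\subset\Omega$. Fix such a $W$ and a radius $R>0$ large enough that $\overline W\subset\{|z|<R\}$. Split $\mu=\mu_R+\mu^R$ with $\mu_R:=\mu|_{\overline{B_R}}$ (compactly supported) and $\mu^R:=\mu|_{\C\setminus\overline{B_R}}$, and similarly $\nu=\nu_R+\nu^R$. The tails $\mu^R$ and $\nu^R$ still integrate the logarithm at infinity, so, approximating them from below by compactly supported truncations $\mu^R|_{B_N\setminus\overline{B_R}}$ and taking $N\to\infty$, one obtains well-defined balayages $\widehat{\mu^R}=\Bal(\mu^R,\overline{B_R})$ and $\widehat{\nu^R}=\Bal(\nu^R,\overline{B_R})$ supported on $\partial B_R$, satisfying
$$V^{\widehat{\mu^R}}(z)=V^{\mu^R}(z)+C_\mu,\qquad V^{\widehat{\nu^R}}(z)=V^{\nu^R}(z)+C_\nu,\qquad z\in\overline{B_R},$$
for suitable constants $C_\mu,C_\nu$ (the nonvanishing of these constants reflects that $\C\setminus\overline{B_R}$ is unbounded).

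Next, set $\widetilde\mu:=\mu_R+\widehat{\mu^R}$ and $\widetilde\nu:=\nu_R+\widehat{\nu^R}$, which are compactly supported. On $W$ the inequality $V^\mu\le V^\nu+c$ transforms into
$$V^{\widetilde\mu}(z)\le V^{\widetilde\nu}(z)+c',\qquad c':=c+C_\mu-C_\nu,$$
with equality on $A\cap W$. Apply the classical de~La~Vall\'ee~Poussin theorem (cf.\ \cite[Theorem~IV.4.5]{Saff:97}) to $\widetilde\mu$ and $\widetilde\nu$ to conclude $\widetilde\nu|_{A\cap W}\le\widetilde\mu|_{A\cap W}$. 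Because $\widehat{\mu^R}$ and $\widehat{\nu^R}$ are supported on $\partial B_R$, which is disjoint from $W$, one has $\widetilde\mu|_W=\mu|_W$ and $\widetilde\nu|_W=\nu|_W$, which yields $\nu|_{A\cap W}\le\mu|_{A\cap W}$. Since $W$ was an arbitrary bounded open set compactly contained in $\Omega$, this gives $\nu|_A\le\mu|_A$.

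The main obstacle is justifying the balayage step for the unbounded tails $\mu^R,\nu^R$: one must verify that the truncation-limit construction produces a Radon measure on $\partial B_R$ with the same total mass and a potential that differs from $V^{\mu^R}$ by a finite additive constant on $\overline{B_R}$. The key point is that the log-integrability at infinity of $\mu^R$ and $\nu^R$ keeps the discrepancy $V^{\mu^R|_{B_N\setminus\overline{B_R}}}-V^{\mu^R}$ uniformly bounded on compact subsets of $\overline{B_R}$ as $N\to\infty$, and yields convergence of the balayage constants. The assumption $(\supp\mu)\cup(\supp\nu)\neq\C$ can be exploited to choose $R$ so that neither measure is fully supported inside $B_R$, providing reference points at which to identify these constants. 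Once this potential-theoretic extension is in place, the reduction to the classical theorem is routine.
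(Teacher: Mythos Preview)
Your approach is correct but genuinely different from the paper's. The paper reduces to the compact case via the inversion $L(z)=1/z$: after translating so that $0\notin\supp\mu\cup\supp\nu$ (this is precisely where the hypothesis $(\supp\mu)\cup(\supp\nu)\neq\C$ is used), one has the identity $V^\eta(z)=V^{L_*\eta}(L(z))-\eta(\C)V^{\delta_0}(L(z))-c_\eta$ with $c_\eta=\int\log|t|\,d\eta(t)$ finite by log-integrability. This converts \eqref{LVP} into an inequality between the compactly supported measures $L_*\mu+\nu(\C)\delta_0$ and $L_*\nu+\mu(\C)\delta_0$ on the domain $L(\Omega)\setminus\{0\}$; the classical theorem applies directly and one pulls the conclusion back through $L$.

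Your balayage route is a natural alternative and also works. The technical point you flag---existence of $\Bal(\mu^R,\overline{B_R})$ with the correct potential identity---is straightforward once you observe that for $|z|>R$ the balayage constant for $\delta_z$ onto $\overline{B_R}$ equals the Green function $g_{\C\setminus\overline{B_R}}(z,\infty)=\log(|z|/R)$; linearity and monotone convergence then give $C_\mu=\int_{|z|>R}\log(|z|/R)\,d\mu(z)$, finite by log-integrability, with no need for ``reference points''. In fact your invocation of the hypothesis $(\supp\mu)\cup(\supp\nu)\neq\C$ is superfluous in your scheme: the balayage reduction goes through regardless, so your argument is slightly more general than the paper's. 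The trade-off is that the inversion trick is shorter and avoids the limit construction, while your approach is more hands-on and needs the extra paragraph of justification you sketched.
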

\begin{proof}
We may assume, without loss of generality, that
\begin{equation}\label{assump}
0\notin(\supp\mu)\cup(\supp\nu).
\end{equation}
The idea of proof is to map the problem to a compact setting by using the inversion
\begin{equation}\label{invers}
L:\C\setminus\{0\}\to\C\setminus\{0\},\qquad L(z)=1/z.
\end{equation}
When a subset $K$ of $\C$ is unbounded, we make the convention that
$$L(K)=\{L(x),~x\in K\}\cup\{0\},$$
For $\nu$ a Borel measure supported on the set $K$, we denote by $L_{*}\nu$ its push-forward by $L$, that is, the measure on $L(K)$ such that
$$\int_{L(K)}f(x)dL_{*}\nu(x)=\int_{K}f(L(x))d\nu(x),$$
for any Borel function $f$ on $L(K)$. For $K$ unbounded, one may check that the map
$$L_{*}:\MM(K)\to\MM(L(K))$$
is a homeomorphism from $\MM(K)$ to the subset of $\MM(L(K))$ of measures which put no mass at $0$.
For a measure $\eta$ supported on a set $K$,
\begin{align}
V^{\eta}(z) & = \int\log\frac{1}{|z-t|}d\eta(t)=\int\log\frac{1}{|tz||L(z)-L(t)|}d\eta(t)
\notag\\
& =V^{L_{*}\eta}(L(z))-\eta(\C)\log|z|-\int\log|t|d\eta(t)\notag\\
\label{corresp-U}
& =V^{L_{*}\eta}(L(z))-\eta(\C)V^{\delta_0}(L(z))-c_{\eta},
\end{align}
where the constant $c_{\eta}:=\int\log|t|d\eta(t)$ is finite if $0\notin\supp\eta$ and $\eta$ integrates the logarithm at infinity.
Note that, in view of assumption (\ref{assump}), we may restrict ourselves to a domain $\Omega$ that does not contain 0.
The inequality (\ref{LVP}) translates into
\begin{equation}\label{ineg-T}
V^{L_{*}\mu+\nu(\C)\delta_0}(z)\leq V^{L_{*}\nu+\mu(\C)\delta_0}(z)+c-c_{\nu}+c_{\mu},\qquad z\in L(\Omega)\setminus\{0\},
\end{equation}
Note that both potentials are finite on $L(\Omega)\setminus\{0\}$. Applying the version of Theorem \ref{dLVP} for compactly supported measures to $L_{*}\mu+\nu(\C)\delta_0$ and $L_{*}\nu+\mu(\C)\delta_0$ with the domain $L(\Omega)\setminus\{0\}$, we obtain
$L_{*}\mu_{|A}\leq L_{*}\nu_{|A}$, with
$A$ the subset of $L(\Omega)\setminus\{0\}$ where equality holds in (\ref{ineg-T}).  It then suffices to apply the map $L$ to the last inequality to finish the proof.
\end{proof}
\subsection{Domination principle and the signed equilibrium measure}
\label{sgn-meas}
We first give a version of the domination principle for logarithmic potentials of measures with possibly unbounded supports, compare with the corresponding statement in \cite[Section I.3]{Saff:97}.
\begin{theorem}[Domination Principle]\label{dom}
Let $\mu$ and $\nu$ be positive and finite measures with closed supports in
$\C$, which integrate the logarithm at infinity, and such that
$$\nu(\C)\leq\mu(\C)\qquad\text{and}\qquad I(\mu)<\infty.$$
Then,
$$V^\mu(z)\leq V^{\nu}(z)+C,\quad\mu-a.e.\quad\implies\quad
V^\mu(z)\leq V^{\nu}(z)+C,\quad z\in\C,$$
where $C$ is some constant.
\end{theorem}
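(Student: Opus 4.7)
The plan is to mirror the inversion strategy used in the proof of the preceding Theorem \ref{dLVP}, reducing the claim to the classical domination principle for compactly supported measures from \cite[Section I.3]{Saff:97}. By an initial translation, I may assume without loss of generality that $0\notin(\supp\mu)\cup(\supp\nu)$, so that the inversion $L(z)=1/z$ of \eqref{invers} is well-defined on both supports.

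Applying the pushforward by $L$ and using the transformation formula \eqref{corresp-U} for the logarithmic potential, the hypothesis $V^{\mu}\leq V^{\nu}+C$ valid $\mu$-a.e.\ translates into
\begin{equation*}
V^{L_{*}\mu+\nu(\C)\delta_{0}}(w)\leq V^{L_{*}\nu+\mu(\C)\delta_{0}}(w)+C',\qquad L_{*}\mu\text{-a.e. }w,
\end{equation*}
where $C'=C+c_{\mu}-c_{\nu}$ is finite because both $\mu$ and $\nu$ integrate the logarithm at infinity. The two transformed measures $\tilde\mu:=L_{*}\mu+\nu(\C)\delta_{0}$ and $\tilde\nu:=L_{*}\nu+\mu(\C)\delta_{0}$ have compact supports and share the total mass $\mu(\C)+\nu(\C)$, so the mass condition of the compact-support statement is automatically satisfied.

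The main obstacle is that $I(\tilde\mu)=+\infty$ due to the point mass at the origin, so the classical domination principle cannot be invoked verbatim. I would circumvent this by regularization: replace $\delta_{0}$ by $\rho_{\epsilon}$, the uniform probability measure on the disk $B(0,\epsilon)$, whose potential equals $-\log|w|$ outside the disk and is bounded inside. Setting $\tilde\mu_{\epsilon}:=L_{*}\mu+\nu(\C)\rho_{\epsilon}$ and $\tilde\nu_{\epsilon}:=L_{*}\nu+\mu(\C)\rho_{\epsilon}$ produces measures of finite energy, using that $\supp L_{*}\mu$ stays away from $0$ and that $I(L_{*}\mu)<\infty$ is inherited from $I(\mu)<\infty$ via \eqref{corresp-U}. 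For $\epsilon$ smaller than the distance from $\supp L_{*}\mu$ to $0$, the transformed hypothesis is unchanged on $\supp L_{*}\mu$; any possible failure on $B(0,\epsilon)$ is absorbed into an additive correction $C'_{\epsilon}$ which, because the coefficient $\nu(\C)-\mu(\C)\leq 0$ multiplies $V^{\rho_{\epsilon}}$, either stays equal to $C'$ or tends to $-\infty$ as $\epsilon\to 0$. An application of the classical compact-support domination principle to $\tilde\mu_{\epsilon}$ and $\tilde\nu_{\epsilon}$ then gives the inequality throughout $\C$, and letting $\epsilon\to 0$ and transferring back via $L^{-1}$ yields $V^{\mu}\leq V^{\nu}+C$ on the whole of $\C$.

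The delicate point is the control of the correction term near $w=0$ in the critical case $\mu(\C)=\nu(\C)$, where one must verify that the auxiliary constant $C'_{\epsilon}$ converges precisely to $C'$ rather than to some strictly larger value; this requires a careful comparison of $V^{L_{*}\mu}(0)$ and $V^{L_{*}\nu}(0)$ with the constants $c_{\mu}$ and $c_{\nu}$. An alternative, possibly cleaner, route would bypass the inversion altogether and work directly with $u:=V^{\nu}+C-V^{\mu}$, which is superharmonic on $\C\setminus\supp\mu$ and has a controlled limit at infinity (equal to $C$ when $\mu(\C)=\nu(\C)$ and to $+\infty$ otherwise) thanks to the log-integrability hypotheses. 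The task would then reduce to upgrading the $\mu$-a.e.\ hypothesis to a quasi-everywhere statement on $\supp\mu$, using Lemma \ref{cap-mu} (polar sets carry no $\mu$-mass since $I(\mu)<\infty$) combined with a Frostman-type integration argument, after which the minimum principle applied on each connected component of $\C\setminus\supp\mu$ closes the proof.
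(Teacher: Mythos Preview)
Your initial reduction contains a genuine gap: the translation making $0\notin(\supp\mu)\cup(\supp\nu)$ is only possible when $(\supp\mu)\cup(\supp\nu)\neq\C$, but the hypotheses of the theorem do not exclude $\supp\mu=\C$ or $\supp\nu=\C$. The paper handles this by a case analysis. When the union of the supports omits a point, the inversion is applied directly (Case 1.a). When $\supp\mu\neq\C$ but $\supp\nu$ fills the remaining gap, one truncates $\nu$ on a shrinking ball around a non-atom of $\nu$, reduces to Case 1.a, and passes to the limit (Case 1.b). When $\supp\mu=\C$, one truncates $\mu$ outside large balls and reassigns the lost mass to the equilibrium measure $\omega_K$ of a compact set $K$ of positive capacity contained in the set where the hypothesis holds; the result of Case 1 then applies to the truncations, and the principle of descent (Theorem~\ref{th-descent}) recovers the inequality for $\mu$ itself (Case 2). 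Your alternative route via the minimum principle on $\C\setminus\supp\mu$ runs into the same obstruction, since that complement may be empty.

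Even in the situation where your translation is legitimate, you manufacture an unnecessary obstacle. Instead of adding $\nu(\C)\delta_0$ to the $\mu$-side and $\mu(\C)\delta_0$ to the $\nu$-side (which puts infinite energy on the left and forces the $\rho_\epsilon$ regularization), the paper leaves the left side as $L_*\mu$ alone and places only the difference $(\mu(\C)-\nu(\C))\delta_0$ on the right. Since $\mu(\C)\geq\nu(\C)$ this is still a positive measure, the two sides have equal total mass, and $I(L_*\mu)=I(\mu)+2\mu(\C)c_\mu<\infty$, so the classical compact-support domination principle applies verbatim. With this rearrangement the regularization step and the ``delicate point'' in the equal-mass case both disappear.
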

\begin{proof}
1) We first assume that
$\supp\mu\neq\C$.
\\
-- Case 1.a) : $\supp\mu\cup\supp\nu\neq\C$. Without loss of generality, one may suppose that
 $B(0,R)\subset\C\setminus(\supp\mu\cup\supp\nu)$.
Making use of the inversion (\ref{invers}) and the relation (\ref{corresp-U}) between potentials, the inequality which holds $\mu$-a.e. becomes
$$
V^{L_{*}\mu}(z)\leq V^{L_{*}\nu+(\mu(\C)-\nu(\C))\delta_0}(z)+C-c_{\nu}+c_{\mu},\qquad L_{*}\mu-a.e.,
$$
where the constants
$$c_{\nu}=\int\log|t|d\nu(t),\qquad c_{\mu}=\int\log|t|d\mu(t),$$
are finite.
Applying the domination principle with the positive measures $L_{*}\mu$ and $L_{*}\nu+(\mu(\C)-\nu(\C))\delta_0$ which have compact supports (note also that
$I(L_{*}\mu)=I(\mu)+2\mu(\C)c_{\mu}$ is finite and the inequality on the masses is satisfied), and then making use of $L$ again, one obtains
$V^\mu(z)\leq V^{\nu}(z)+C$ everywhere on $\C$.\\
-- Case 1.b) : $\supp\mu\cup\supp\nu=\C$. We may still assume that $B(0,R)\subset\C\setminus(\supp\mu)$. Then $B(0,R)\subset\supp\nu$.
The finite measure $\nu$ has at most a countable number of mass points, hence there is some
$z\in B(0,R)$ such that $\nu(\{z\})=0$. By translation, (and possibly choosing a smaller
value of $R$) one may assume
 $z=0$. Let $\nu_{n}=\nu|_{B_{n}}$, $B_{n}:=B(0,1/n)$, and $\tilde\nu_{n}=\nu-\nu_{n}$. One has, for $|z|>R$,
$$
V^{\nu_{n}}(z)\leq\epsilon_{n}:=\log\left(\frac{1}{R-1/n}\right)\nu_{n}(\C)\to 0,\quad\text{as }n\to\infty,
$$
where $\nu_{n}(\C)\to0$ as $n\to\infty$ by monotone convergence. The inequality that holds $\mu$-a.e. implies that
$$V^\mu(z)\leq V^{\tilde\nu_{n}}(z)+\epsilon_{n}+C,\qquad\mu-a.e.,$$
with $\tilde\nu_{n}(\C)\leq\mu(\C)$.
The result obtained in the first case gives
$$V^\mu(z)\leq V^{\nu}(z)-V^{\nu_{n}}(z)+\epsilon_{n}+C,\qquad\text{everywhere in }\C.$$
Now, it suffices to let $n$ go to infinity. Indeed, for a given $z\neq0$, $V^{\nu_{n}}(z)\to0$ by the dominated convergence theorem.
For $z=0$, if $V^{\nu}(0)<\infty$, we still have $V^{\nu_{n}}(0)\to0$ by dominated convergence, and if $V^{\nu}(0)=\infty$, the sought inequality is obviously satisfied.
2) We now assume that $\supp\mu=\C$.
Let $\epsilon>0$. 
Let $\mu_{n}=\mu|_{\C\setminus B_{n}}$, $B_{n}:=B(0,n)$. One has, for $z\in B(0,n)$, 
\begin{align*}
\int\log|z-t|d\mu_{n}(t) & \leq\int\log(1+|z|)d\mu_{n}(t)+\int\log(1+|t|)d\mu_{n}(t)\\
& \leq2\int\log(1+|t|)d\mu_{n}(t)\leq\epsilon,
\end{align*}
for $n$ large enough. Next, denote by $A$ the subset of $\C$ of full $\mu$-measure, i.e. $\mu(\C\setminus A)=0$, where the inequality $V^{\mu}(z)\leq V^{\nu}(z)+C$ holds. Since $I(\mu)<\infty$, $A$ is of positive capacity. In particular, there exists some compact set $K\subset A$ of positive capacity. Denote by $\omega_{K}$ its equilibrium measure, and recall that $V^{\omega_{K}}(z)\leq I_{K}$, $z\in\C$, where $I_{K}$ is the energy of $K$. Now, we define
$$
\tilde\mu_{n}=\mu-\mu_{n}+\mu_{n}(\C)\omega_{K}.
$$
Then, $\tilde\mu_{n}$ has the same mass as $\mu$, is of finite energy, and since the support of $\omega_{K}$ is included in $A$, we have
$$
V^{\tilde\mu_{n}}(z)\leq V^{\nu}(z)+\epsilon+\mu_{n}(\C)I_{K}+C,\quad\tilde\mu_{n}-a.e.
$$
By applying the result of Case 1, we derive that the above inequality is satisfied everywhere in $\C$. Finally, $\tilde\mu_{n}$ is a log-tight sequence of measures that tends to $\mu$. Hence, by letting $\epsilon$ tend to 0, $n$ tend to infinity, and applying Theorem \ref
{th-descent}, we get
$$
V^{\mu}(z)\leq\liminf_{n\to\infty}V^{\tilde\mu_{n}}(z)\leq V^{\nu}(z)+C,\quad z\in\C.
$$
\end{proof}
Let $Q$ be a weakly admissible external field on a closed subset $\Sigma$ of the complex plane.
Recall that the notion of a {\it signed equilibrium measure} $\eta_{T}$ was defined in Section 2. We mentioned there that, in the presence of an admissible weight, the support of its positive part $\eta_t^+$ (corresponding to the Jordan decomposition) contains the support of the equilibrium distribution. We now prove that this result remains valid in the weakly admissible case as well.

\begin{proposition}
Suppose $\sigma=\sigma^{+}-\sigma^{-}$ is a signed measure on the possibly unbounded set $\Sigma\neq\C$, such that
\\
i) $\sigma^{+}(\C)-\sigma^{-}(\C)=1$,\\
ii) the measures $\sigma^{+}$ and $\sigma^{-}$ have finite energies,\\
iii) $V^{\sigma^{+}}$ and $V^{\sigma^{-}}$ are finite on $\supp\sigma$,\\
iv) there exists a constant $C$ such that
\begin{equation}\label{sign-meas}
V^{\sigma}(z)+Q(z)= C  \quad\text{q.e. on }\Sigma,
\end{equation}
Then,
$\mu_{Q}\leq\sigma^{+}$. In particular, the support of $\mu_{Q}$ is contained in the support of $\sigma^{+}$.
\end{proposition}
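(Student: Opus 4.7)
The plan is to apply the Domination Principle (Theorem \ref{dom}) followed by the de la Vall\'ee Poussin theorem (Theorem \ref{dLVP}) to the two positive measures $\sigma^{+}$ and $\mu_{Q}+\sigma^{-}$. By assumption (i) these share the same total mass, $\sigma^{+}(\C)=1+\sigma^{-}(\C)$; $\sigma^{+}$ has finite energy by assumption (ii); and all three of $\mu_{Q}$, $\sigma^{+}$, $\sigma^{-}$ have finite energies, hence integrate the logarithm at infinity by Lemma \ref{cap-mu} and \eqref{cond-super}. Assumption (iii) guarantees that the signed potential $V^{\sigma}=V^{\sigma^{+}}-V^{\sigma^{-}}$ is unambiguously defined on $\supp\sigma$, so that \eqref{sign-meas} may be freely manipulated pointwise there.

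First I would combine \eqref{sign-meas} with the weakly admissible Frostman inequality $V^{\mu_{Q}}(z)+Q(z)\geq c_{T}$ (valid q.e.\ on $\Sigma$ by Proposition \ref{char-Frost}) and subtract, obtaining
$$V^{\sigma^{+}}(z)\;\leq\;V^{\mu_{Q}+\sigma^{-}}(z)+(C-c_{T})\qquad\text{q.e.\ on }\Sigma.$$
Since $\sigma^{+}$ has finite energy and is supported in $\Sigma$, Lemma \ref{cap-mu} upgrades this to a $\sigma^{+}$-a.e.\ inequality. The Domination Principle, applied with $\mu=\sigma^{+}$ and $\nu=\mu_{Q}+\sigma^{-}$ (the mass condition $\nu(\C)\leq\mu(\C)$ holding with equality), then extends the bound to all of $\C$:
\begin{equation}\label{plan-star}
V^{\sigma^{+}}(z)\;\leq\;V^{\mu_{Q}+\sigma^{-}}(z)+(C-c_{T}),\qquad z\in\C.
\end{equation}

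The complementary Frostman inequality $V^{\mu_{Q}}(z)+Q(z)\leq c_{T}$ on $\supp\mu_{Q}$, combined once more with \eqref{sign-meas}, yields the reverse inequality q.e.\ on $\supp\mu_{Q}$; together with \eqref{plan-star} this gives equality q.e.\ on $\supp\mu_{Q}$. Let $A$ denote the Borel set where equality holds in \eqref{plan-star}. Then $\supp\mu_{Q}\subset A$ up to a polar set, and since $\mu_{Q}$ has finite energy it charges no polar set, so $\mu_{Q}$ is concentrated on $A$. Applying Theorem \ref{dLVP} on the domain $\Omega$ obtained by removing from $\C$ the polar set where any of the potentials $V^{\sigma^{+}}$, $V^{\mu_{Q}}$, $V^{\sigma^{-}}$ is infinite yields $(\mu_{Q}+\sigma^{-})_{|A}\leq\sigma^{+}_{|A}$, whence, for any Borel set $B$,
$$\mu_{Q}(B)=\mu_{Q}(B\cap A)\leq(\mu_{Q}+\sigma^{-})(B\cap A)\leq\sigma^{+}(B\cap A)\leq\sigma^{+}(B),$$
so $\mu_{Q}\leq\sigma^{+}$, and in particular $\supp\mu_{Q}\subset\supp\sigma^{+}$.

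The main difficulty I foresee is administrative rather than conceptual: keeping careful track of the various polar exceptional sets on which the Frostman inequalities, the signed equilibrium relation \eqref{sign-meas}, and the individual potentials can fail. The key point is that each of $\mu_{Q}$, $\sigma^{+}$, $\sigma^{-}$ has finite energy, so by Lemma \ref{cap-mu} none of them charges polar sets; assumption (iii) intervenes precisely to make the pointwise subtraction of $V^{\sigma^{+}}$ and $V^{\sigma^{-}}$ on $\supp\sigma$ legitimate, so that every q.e.\ statement transfers to a genuinely usable a.e.\ statement at the relevant step.
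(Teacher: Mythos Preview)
Your proposal is correct and follows essentially the same approach as the paper: combine the Frostman inequalities for $\mu_Q$ with the signed equilibrium identity \eqref{sign-meas}, use finite energy (Lemma \ref{cap-mu}) to pass to a $\sigma^+$-a.e.\ inequality, apply the Domination Principle to globalize, and then invoke the de~la~Vall\'ee~Poussin theorem to deduce $(\mu_Q+\sigma^-)_{|A}\leq\sigma^+_{|A}$. Your write-up is in fact more careful than the paper's about the polar exceptional sets and about extracting $\mu_Q\leq\sigma^+$ on all Borel sets from the restricted inequality; the paper simply asserts that the three potentials are finite on $\C$ and takes $\Omega=\C$ in Theorem \ref{dLVP}.
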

\begin{proof}
From Frostman inequalities and (\ref{sign-meas}), one has
\begin{align}
V^{\mu_{Q}+\sigma^{-}}(z)+C-F_{Q}\geq V^{\sigma^{+}}(z), & \quad\text{q.e. on }\Sigma,\label{ineg1}\\
V^{\mu_{Q}+\sigma^{-}}(z)+C-F_{Q} = V^{\sigma^{+}}(z), & \quad\text{q.e. on }
\supp(\mu_{Q}). \notag
\end{align}
We want to apply the domination principle (Theorem \ref{dom}) to the first inequality. First, note that it holds $\sigma^{+}$-a.e. since $\sigma^{+}$ has finite energy (recall Lemma \ref{cap-mu}). Note also that $(\mu_{Q}+\sigma^{-})(\C)=\sigma^{+}(\C)$.
Hence the domination principle applies and (\ref{ineg1}) holds everywhere in $\C$. Then it suffices to make use of Theorem \ref{dLVP} with $\Omega=\C$, where we remark that all three potentials $V^{\mu_{Q}}, V^{\sigma^{-}}, V^{\sigma^{+}}$ are finite in $\C$.
\end{proof}
\section*{Acknowledgements}
R.O. was partially supported by the Research Project of Ministerio de Ciencia e Innovaci\'on (Spain) under grant MTM2015-71352-P.
J.S.L. was partially supported by the Research Projects of Ministerio de Ciencia e Innovaci\'on (Spain), under grant MTM2015-71352-P, and Junta de Andaluc\'ia, under grant FQM384.

The authors thank the referees for their valuable suggestions and criticisms which have improved the manuscript.

R.O. also wishes to thank Prof. Peter Dragnev (PFW) for helpful discussions about the signed equilibrium measures during his stays in PFW, Indiana (USA).

\end{document}